\definecolor{darkred}{rgb}{0.8,0,0}
\newcommand{\captionfonts}{\footnotesize}
\long\def\@makecaption#1#2{%
  \vskip\abovecaptionskip
  \sbox\@tempboxa{{\captionfonts #1: #2}}%
  \ifdim \wd\@tempboxa >\hsize
    {\captionfonts #1: #2\par}
  \else
    \hbox to\hsize{\hfil\box\@tempboxa\hfil}%
  \fi
  \vskip\belowcaptionskip}
\newcommand{\nwc}{\newcommand}
\newtheorem{proposition}{Proposition}[section]
\newtheorem{lemma}[proposition]{Lemma}
\newtheorem{remark}[proposition]{Remark}
\newtheorem*{remark*}{Remark}
\newtheorem{theorem}[proposition]{Theorem}
\newtheorem{corollary}[proposition]{Corollary}
\newtheorem{definition}[proposition]{Definition}
\newtheorem{assumption}[proposition]{Assumption}
\providecommand{\U}[1]{\protect\rule{.1in}{.1in}}
\newcommand{\eps}{\varepsilon}
\newcommand{\R}{\mathbb{R}}
\newcommand{\N}{\mathbb{N}}
\nwc{\clb}{\color{blue}}
\newcommand{\ignore}[1]{}
\providecommand{\U}[1]{\protect\rule{.1in}{.1in}}
\newcommand{\Ha}{\mathcal{H}}
\newcommand{\weakto}{\rightharpoonup}
\newcommand{\weakstarto}{\stackrel{\ast}{\rightharpoonup}}
\newcommand{\baru}{u}
\newcommand{\alcrit}{\alpha_{0}}
\newcommand{\embeds}{\hookrightarrow}
\DeclareMathOperator{\dist}{dist}
\DeclareMathOperator{\id}{id}
\DeclareMathOperator{\spt}{spt}
\begin{document}
\title
{A bulk--surface reaction--diffusion system for cell polarization}

\author{
B. Niethammer
\and
M. R\"oger
\and
J. J. L. Vel\'{a}zquez
}
\maketitle

\begin{abstract}
We propose a model for cell polarization as a response to an external signal which results in a system of PDEs for different variants of a protein
on the cell surface and interior respectively. We study stationary states of this model in certain parameter regimes in which several reaction rates on the
membrane as well as the diffusion coefficient within the cell are large. It turns out that in suitable scaling limits steady states converge to solutions
of some obstacle type problems. For these limiting problems we prove the onset of polarization if the total mass of protein is sufficiently small.
For some variants we can even characterize precisely the critical mass for which polarization occurs.
\\[2ex]\noindent%
{\bf AMS Classification.} 35R35, 92C37, 35R01, 35B36
\\[2ex]\noindent%
{\bf Keywords. }PDEs on surfaces, Obstacle problem, Polarization, Pattern formation
\end{abstract}

\tableofcontents

\section{Introduction}

Cell polarization in response to external chemical gradients plays a crucial role in many biological processes that involve
the motion of eukaryotic cells (cf.~\cite{RaEd17}). During the process of cell polarization the concentrations of several chemicals, some of them
attached to the cell membrane and others contained in the cytoplasm, are rearranged significantly.
One of the basic issues that corresponding models must address is the mechanism by which relatively weak chemical gradients
yield large spatial changes of the concentrations of chemicals at the cellular level.

\medskip

A significant number of the models used to describe cell polarization rely
on the local excitation, global inhibition (LEGI) mechanism which was
suggested in the seminal paper about cell polarization \cite{Mein99}.
Different forms in which the chemical pathways might be arranged in a way
consistent with this mechanism, and yield the specific chemical patterns
associated to cell polarization, are described in \cite{SkLN05}.
Typically polarization is achieved by the combination of an internal pattern forming system, a  response to an external signal that imposes some directional preference to the pattern, and the amplification of small concentration differences.

\medskip

In this paper we study a minimal model for this amplification step and examine when polarization patterns appear. These are characterized by the onset of clearly distinct regions in which the concentrations of some chemicals have different orders of magnitude. The
most remarkable feature of the proposed model is that for a suitable choice of  parameters
it is possible to approximate the model by a generalized obstacle problem. This asymptotic reduction yields a clean and analytically tractable characterization of polarized states and allows for a rigorous investigation of the onset of polarization patterns.

\medskip

The bio-chemical model that we study in this paper is a system of PDEs that is motivated by the GTPase cycle model presented in \cite{RaeRoe12,RaeRoe14}.
We consider two versions of one  protein on the cell surface which is either in an active or in an inactive state. We denote the first surface concentration as $u$ and the second as $v.$ Furthermore the inactive proteins can move to the interior of the cell and vice versa. We denote the volume concentration in the cytosol by $w$.
Our model contains three types of activation processes of the proteins which all take place on the cell membrane. First there is an intrinsic activation with  rate $a_{1}$. Second there is an
activation by  a positive feedback mechanism and a rate law given by a Michaelis-Menten law. Third, there is an activation induced by an external
(or internal) chemical signal. We assume here  that this signal has already been processed and has lead to a concentration field $c$ on the membrane of
a chemical that catalyzes the activation (the function $c$ could be also interpreted as the surface concentration of some activated receptors). For the
deactivation we again prescribe a Michaelis-Menten rate law. The use of Michaelis-Menten laws stems from the fact that the corresponding processes require some catalyzation, as the intrinsic activation and deactivation of GTPase proteins is typically very slow \cite{BoRW07}.

\medskip

To introduce our mathematical model, let $\Omega\subset \R^3$ denote the cell and $\Gamma:=\partial \Omega$ the cell membrane.
The  assumptions described above give rise to the following bulk-surface reaction diffusion system.
\begin{align}
	 \partial _{t}u  &  =\Delta u+\left(  a_{1}+\frac{a_{2}u}{a_{3}%
+u}+a_7c\right)  v-\frac{a_{4}u}{1+u}\qquad&\text{ on }\Gamma\times (0,T),\label{eq:org-1}\\
	 \partial _{t}v  &  =\Delta v-\left(  a_{1}+\frac{a_{2}u}{a_{3}%
+u}+a_7c\right)  v+\frac{a_{4}u}{1+u}-a_{5}v+a_{6}w &\text{ on }\Gamma\times (0,T),\label{eq:org-2}\\
	 \partial _{t}w  &  =D\Delta w\qquad&\text{ in } \Omega\times (0,T),\label{eq:org-3}\\
-D\frac{\partial w}{\partial n}  &  =-a_{5}v+a_{6}w\qquad&\text{ on }\Gamma\times (0,T). \label{eq:org-4}%
\end{align}
Here $c\colon \Gamma \times (0,T)\to \R$ is the (processed) external stimulus, with some abuse of notation  $\Delta u$ and $\Delta v$ denote the Laplace-Beltrami operator on the surface $\Gamma$, while $\Delta w$ is the usual Laplacian.

We complement the system with initial conditions
\begin{equation}
	w(\cdot,0) = w_0\quad\text{ in }\Omega,\quad u(\cdot,0)\,=\, u_0\quad\text{ on }\Gamma,\quad v(\cdot,0)\,=\, v_0\quad\text{ on }\Gamma \,,\label{eq:org-initial}
\end{equation}
where $w_0\colon \Omega\to \R$ and $u_0,v_0\colon \Gamma\to\R$ are given nonnegative data.

Solutions of \eqref{eq:org-1}-\eqref{eq:org-initial} satisfy the mass conservation property
\begin{equation}
	\int_\Omega w(x,t)\,dx + \int_\Gamma (u(y,t)+v(y,t))\,d\Ha^2(y) \,=\, \int_\Omega w_0\,dx + \int_\Gamma (u_0+v_0)\,d\Ha^2 \quad\text{ for all }t\geq 0. \label{eq:mass_conservation}
\end{equation}

\medskip

In this paper we will mainly study for given $c=c(x)$ the stationary solutions of \eqref{eq:org-1}-\eqref{eq:org-4}.
Polarization patterns arise here under the assumption that several of the
reaction rate parameters $a_{1},a_{2},...,a_7$, the diffusion coefficient $D$ and the total mass of proteins are large. This
will be parametrized by a large parameter $\varepsilon^{-1}>0$. The most remarkable feature of the patterns is that a suitably rescaled version of $u$ converges as $\varepsilon \rightarrow 0$ to the solution of a variational inequality which is closely related to the classical obstacle problem \cite{KiSt00}. Responsible for this is the presence of the
inhibitory Michaelis-Menten reaction term $\frac{u}{1+u}$, whereas the intrinsic activation term $a_{1}v$ and the positive feedback activation term $\frac{a_2 uv}{a_3+u}$ do not contribute to the limit.
Let us give a simple heuristic explanation for the impact of the Michaelis-Menten reaction term $R(u) =\frac{u}{1+u}$.
In the parameter regimes that we consider it will be convenient to introduce the rescaled concentration $U=\varepsilon u$ in order to account for the large values of $u$ in some
regions. Then, the function $R\big( \frac{U}{\varepsilon }\big) $ converges in a suitable
sense to the maximal monotone graph $\xi (U) $ such that $\xi (0) =[0,1]$, $\xi (U) =1$ if $U>0$.
It is well known that several variational inequalities, for instance the one associated
to the obstacle problem, can be reformulated in terms of maximal monotone
graphs \cite{Br73}, \cite[Sec.2.2]{Diaz85}, compare the Remark \ref{rem:maxmon} below.

\medskip

Actually, in this paper we will consider two different types of scaling limits for stationary solutions of \eqref{eq:org-1}-\eqref{eq:org-4}.
In the first one we will assume that $D=\infty$ before taking the limit $\varepsilon \to 0$, which  is motivated by the fact that the cytosolic diffusion is typically much larger than lateral diffusion over the membrane \cite{KhHW00}.
Then the stationary version of equation \eqref{eq:org-3} becomes just the
formula $w=M-\int_{\Gamma }(u+v)$, where $M$ is the total
amount of protein, and the system reduces to two coupled surface PDEs that include, as a remnant of the bulk-surface coupling, a nonlocal term. Taking then the limit $\varepsilon\rightarrow 0$ we  obtain that the limit satisfies a variational inequality for a suitable PDE.
In the second scaling limit we will take $D$ large but finite (finite
cytosolic diffusion case). The limit $\varepsilon \rightarrow 0$ will then
yield a variational inequality for an operator which contains, in addition
to partial derivatives, the concatenation of the Neumann to Dirichlet map (cf. Appendix \ref{app:ND}) and the Laplace-Beltrami operator.

\medskip

One particular advantage of the limiting obstacle-type problems is that they allow for an easy characterization of polarized states:
these are described by the property that the rescaled
concentration of the active proteins takes the value zero on a set of
positive measure and it takes a positive value on the complement, which has
also positive measure. We will prove in this paper the onset of polarization
patterns, for both cases, if the total (rescaled) mass of protein $m$ is
sufficiently small. In addition, we show that patterns are localized in regions, where the signal $c$ is large.
We will also prove that in the case $D=\infty $, for given concentration $c$, there exists a critical mass of protein $m_{\ast }$ such
that polarization takes place for $m<m_{\ast }$ and it does not for $m>m_{\ast }$. In the case $D<\infty$ due to the presence of the nonlocal contributions,
it is not clear if the critical mass $m_{\ast }$
exists with the same degree of generality concerning the domain $\Omega $
and the concentration $c$. However, in the case of spherical domains $\Omega$ the nonlocal operator reduces to the Dirichlet to Neumann operator
(cf. Appendix \ref{app:ND}) and we will be able
to prove the existence of the critical mass $m_{\ast }$ if $D<\infty$ for
these particular domains and general concentrations $c$.

\medskip

The model \eqref{eq:org-1}-\eqref{eq:org-4} is different from LEGI-type models and rather
describes the signal amplification following a first polarization of the cell, expressed by a heterogeneous distribution $c$. In
the scaling limits under consideration solutions of \eqref{eq:org-1}-\eqref{eq:org-4} do not exhibit spontaneous patterns if $c$ is constant unlike the classical Gierer-Meinhardt models (cf.~\cite{GiMe72}).

\medskip

The system \eqref{eq:org-1}-\eqref{eq:org-initial} is closely related to models for spontaneous cell polarization (in absence of an external signal)
considered in \cite{RaeRoe14,HaRo18}. There, more general reaction kinetics but spatially homogeneous rate constants are assumed.
The model \eqref{eq:org-1}-\eqref{eq:org-initial} belongs to the class of bulk-surface partial differential equations that appears in a variety of different applications and has attracted quite some attention over the last years, see for example \cite{LeRa05,MPCV16,RaEd17,DMMS18} and the references therein for applications to cell biology and \cite{MoSh16,BKMS17,ElRV17,FeLT18,HaRo18} for recent well-posedness results and rigorous asymptotic limits. In \cite{ElRV17}, for a different bulk-surface system where the nonlinearity is contained in the Robin boundary condition, a fast-reaction limit was derived that also takes the form of an obstacle-type problem involving the Dirichlet to Neumann map. However, compared to our contribution the asymptotic analysis and the limiting obstacle-type problems are different.
The convergence of elliptic PDEs with suitably rescaled Michaelis-Menten reactions to a classical obstacle model has already been observed in \cite{BrNi82}.
There the motivation was an approximation of the obstacle problem by a bounded penalty method suitable for numerical simulations. No investigation of pattern forming properties in the sense of our analysis has been done in \cite{BrNi82,ElRV17}. For an investigation of axially symmetric cap-like patterns in a related but different model see \cite{DMMS18}.

\medskip

The plan of the paper is the following. In Section \ref{S.wellposedness} we will briefly
review the well-posedness of the initial value problem \eqref{eq:org-1}-\eqref{eq:org-initial} and
establish the existence of stationary states (cf. \eqref{eq:org-stat-1}-\eqref{eq:org-stat-5}).
In Section \ref{S.Dinfty} we will first investigate the limit of infinite cytosolic
diffusion $D\rightarrow \infty$ for stationary solutions. In this limit $w$ converges to a constant
and the model \eqref{eq:org-1}-\eqref{eq:org-initial} becomes a nonlocal elliptic system (cf.~\eqref{eq:AtDInf1-0}-\eqref{eq:AtDInf3-0}) containing the nonlocal term $\int_{\Gamma }\left( u+v\right)$. Next, we prove that in some suitable scaling limit the solution $u$ of the rescaled
system \eqref{eq:AtDInf1}-\eqref{eq:AtDInf3} converges to the unique solution of an obstacle problem
for the Laplace operator (cf. Theorem \ref{Limeps}). In Section \ref{Ss.criticalmass} we derive precise conditions for the onset of polarization and characterize the positivity set of $u$ for small mass in Section \ref{Ss.localization}. In Section \ref{S.Dfinite} we consider the analogous problems for finite cytosolic diffusion coefficients $D$. We derive in a scaling limit analogous to the case $D=\infty $ a generalized obstacle problem in Theorem \ref{thm:obs1} containing the Dirichlet to Neumann operator. We can prove also prove polarization for small mass for the
resulting model (cf. Subsection \ref{Ss.localization-1}). We obtain a more precise
description of the localization property, as well as the existence of a critical mass in the case of spherical domains $\Omega $ in Subsections \ref{Ss.existenceobs} and \ref{Ss.critmass2}.

\bigskip

\section{Well-posedness and existence of steady states}
\label{S.wellposedness}

Let us state the main assumptions that we impose in the following.
\begin{assumption}\label{ass:pbm2}
Let $\Omega\subset \R^3$ be an open, bounded, connected set with $C^3$-regular boundary $\Gamma=\partial \Omega$. Assume $a_1,a_2\geq 0$, $a_3,a_4,a_5,a_6,a_7>0$ and $D\geq 1$.
Furthermore we assume that $c \colon \Gamma \to \R_+$ is continuous and strictly positive. This in particular implies that there exists $c_0>0$ such that
\begin{equation}
c(x)  \geq c_{0}>0 \quad \mbox{ for all } x\in \Gamma\,. 
	\label{PosCond}%
\end{equation}
\end{assumption}

\begin{remark}
Throughout the paper we will denote by $W^{k,p}(\Omega)$ the usual Sobolev spaces over $\Omega$ and by $W^{k,p}(\Gamma)$ the corresponding Sobolev spaces on the manifold $\Gamma$. For $T>0$ we set $\Omega_T:= \Omega\times (0,T)$ and $\Gamma_T:=\Gamma\times (0,T)$. For a Banach space $X$ we consider the Bochner spaces $L^2(0,T;X)$ and the spaces $H^1(0,T;X)$ of functions in $L^2(0,T;X)$ with weak time derivative in $L^2(0,T;X)$.

The relevant diffusion operator on $\Gamma$ is the corresponding Laplace-Beltrami operator, see for example \cite{PrSi16}. For simplicity we just write $\Delta$ if there is no reason for confusion. We remark that classical elliptic $L^p$- and Schauder-regularity results \cite[Sec.~6.1, Sec.~9.5]{GiTr01} and a partition of unity argument yield the corresponding $L^p$-regularity properties for elliptic equations involving the Laplace--Beltrami operator on $\Gamma$. In fact, in local coordinates the Laplace--Beltrami operator corresponds to an elliptic operator in divergence form (with $C^2$-regular coefficients in our case). Similarly one deduces parabolic $L^2$-regularity in analogy to \cite[Theorem 7.1.5]{Evan10} for evolution problems on $\Gamma$ involving the Laplace--Beltrami operator.

For simplicity we will often neglect $dx$ and $d\Ha^2$ in the corresponding integrals if there is no reason for confusion.

For a function $\varphi:\Gamma\to\R$ we let $\overline\varphi:= \fint_\Gamma \varphi=\frac{1}{|\Gamma|}\int_\Gamma \varphi$.
\end{remark}

We first quote an existence and uniqueness result for the full evolution problem that was shown in  \cite{HaRo18}, Proposition 3.3 (there uniformly bounded initial data were assumed and only the case of constant $c$ is covered, but inspecting the proof we see that the result remains valid under the present assumptions).
\begin{theorem}\label{thm:existence1}
Let $T>0$ and let nonnegative initial data $u_0,v_0 \in L^2(\Gamma)$, $w_0\in L^2(\Omega)$ be given with $\int_{\Gamma} \big(u_0+v_0\big) + \int_{\Omega} w_0=M$.
Under Assumption \ref{ass:pbm2} the system \eqref{eq:org-1}-\eqref{eq:org-initial} has a unique nonnegative weak solution $(u,v,w)$ with
$u,v \in L^2(0,T;H^1(\Gamma))\cap H^1(0,T;H^1(\Gamma)^*)$ and $w\in L^2(0,T;H^1(\Omega))\cap H^1(0,T;H^1(\Omega)^*)$.
\end{theorem}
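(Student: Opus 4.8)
The plan is to establish Theorem~\ref{thm:existence1} by a Galerkin-type approximation combined with a fixed-point argument for the nonlinearities, followed by energy estimates and a passage to the limit, and then to prove uniqueness and nonnegativity separately. First I would set up the natural weak formulation: testing \eqref{eq:org-1}--\eqref{eq:org-2} with functions in $H^1(\Gamma)$ and \eqref{eq:org-3}--\eqref{eq:org-4} with functions in $H^1(\Omega)$, the Neumann boundary term from the bulk equation is replaced using \eqref{eq:org-4}, so that the bulk-surface coupling enters only through the trace of $w$ and the surface function $v$. The key structural observation is that all reaction terms are of the form (bounded coefficient)$\times$(linear term): indeed $a_1 + \frac{a_2 u}{a_3+u} + a_7 c$ is bounded whenever $u\geq 0$ (since $\frac{a_2 u}{a_3+u}\leq a_2$ and $c$ is continuous on the compact set $\Gamma$), and $\frac{a_4 u}{1+u}\leq a_4$ is bounded as well. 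This means that, after truncating the nonlinear coefficients to make them globally Lipschitz and bounded regardless of sign, the system is semilinear with globally Lipschitz right-hand side in $(u,v,w)$; a standard Banach fixed-point argument on $C([0,T_0]; L^2(\Gamma)^2\times L^2(\Omega))$ for small $T_0$, or equivalently a Galerkin scheme, yields a local weak solution.

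Next I would derive the a priori estimates that both upgrade regularity and allow continuation to arbitrary $T$. Testing the truncated equations with $(u,v)$ and $w$ respectively, summing, and using the boundedness of the reaction coefficients together with the trace inequality $\|w\|_{L^2(\Gamma)}\leq C\|w\|_{H^1(\Omega)}^{1/2}\|w\|_{L^2(\Omega)}^{1/2}$ absorbed via Young's inequality into the $\|\nabla w\|_{L^2(\Omega)}^2$ term (here $D\geq 1$ is used), gives
\begin{equation*}
\frac{d}{dt}\Big(\|u\|_{L^2(\Gamma)}^2 + \|v\|_{L^2(\Gamma)}^2 + \|w\|_{L^2(\Omega)}^2\Big) + c_1\Big(\|u\|_{H^1(\Gamma)}^2 + \|v\|_{H^1(\Gamma)}^2 + \|w\|_{H^1(\Omega)}^2\Big) \leq c_2\Big(\|u\|_{L^2(\Gamma)}^2 + \|v\|_{L^2(\Gamma)}^2 + \|w\|_{L^2(\Omega)}^2\Big) + c_3,
\end{equation*}
so Gronwall gives an $L^\infty(0,T;L^2)$ bound and, after integration in time, the $L^2(0,T;H^1)$ bounds. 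Reading the equations as evolution equations then yields the time-derivative bounds in the dual spaces $H^1(\Gamma)^*$ and $H^1(\Omega)^*$. Since the estimate is global, the local solution extends to $[0,T]$ for every $T>0$.

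Then I would remove the truncation by proving nonnegativity: testing the $u$-equation with $u_-:=\min(u,0)$ (legitimate since $u\in L^2(0,T;H^1(\Gamma))$ with dual time derivative, so the chain rule for $\tfrac12\tfrac{d}{dt}\|u_-\|_{L^2(\Gamma)}^2$ holds), and using that on $\{u<0\}$ the production term $(a_1+\cdots+a_7c)v$ has the right sign provided $v\geq 0$, and the loss term $-\frac{a_4 u}{1+u}$ is also sign-favourable, gives $u\geq 0$; a simultaneous (coupled) argument with $v_-$ and $w_-$, exploiting that $-a_5 v + a_6 w$ couples the two, closes the nonnegativity of the whole triple by a Gronwall argument on $\|u_-\|^2+\|v_-\|^2+\|w_-\|^2$. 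Once $u,v,w\geq 0$ are known, the truncated coefficients coincide with the original ones along the solution, so we have a genuine weak solution of \eqref{eq:org-1}--\eqref{eq:org-initial}. Uniqueness follows from the same Lipschitz/Gronwall structure applied to the difference of two solutions (both nonnegative, hence the reaction terms are Lipschitz along them). The main obstacle, and the point deserving the most care, is handling the bulk-surface coupling in the energy estimates — in particular controlling the boundary traces of $w$ uniformly and checking that the mixed terms $\int_\Gamma(-a_5 v + a_6 w)(v - w|_\Gamma)$ (or the analogous terms in the nonnegativity and uniqueness arguments) are absorbed correctly, which is exactly why the hypothesis $D\geq 1$ appears; everything else is a routine application of the standard theory of semilinear parabolic systems, and as noted in the excerpt the argument is essentially that of \cite{HaRo18}, Proposition~3.3, which we may simply cite.
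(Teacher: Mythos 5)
Your plan is sound and matches the strategy the paper relies on: the paper does not prove Theorem~\ref{thm:existence1} itself but simply invokes \cite{HaRo18}, Proposition~3.3, noting that the argument there carries over to non-constant $c$ and $L^2$ initial data, which is exactly the (truncation $+$ Banach/Galerkin $+$ energy $+$ Gronwall/nonnegativity) scheme you outline. One small refinement worth noting: rather than testing with $(u,v,w)$ and then invoking the trace inequality and $D\geq 1$ to absorb boundary terms, test instead with $(u,a_5v,a_6w)$ — this is the Lyapunov functional $L(t)=\int_\Omega\frac{a_6}{2}w^2+\int_\Gamma\frac12(u^2+a_5v^2)$ used in Proposition~\ref{prop:invar}. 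With that weighting the bulk–surface exchange contributions combine exactly to $-\int_\Gamma(a_5v-a_6w)^2\leq 0$, so the boundary coupling is manifestly dissipative and no trace interpolation (nor the $D\geq1$ hypothesis) is needed at that step. Your sketch still closes, but the weighted test is cleaner and is what the paper uses in the neighboring a priori estimate; aside from this the proposal is a correct unfolding of the cited argument.
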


The following a-priori estimate will be useful to prove the existence of steady states as well as to pass to the limit $D\to \infty$.
\begin{proposition}\label{prop:invar}
Assume that $(w,u,v)$ is a nonnegative solution of \eqref{eq:org-1}-\eqref{eq:org-initial} with $\int_{\Gamma} \big(u_0+v_0\big) + \int_{\Omega} w_0=M$.
Define $L:[0,T]\to\R$ by
\begin{equation*}
	L(t) := \int_\Omega \frac{a_6}{2} w^2(x,t)\,dx + \int_\Gamma \frac{1}{2}(u^2+a_5 v^2)(y,t)\,d\Ha^2(y).
\end{equation*}
There exist constants $C_1,C_2$ only depending on $\Omega$, $c$ and $a_1,\dots,a_7$, and for any $\sigma>0$ a constant $C_3$ that in addition depends on $\sigma$, such that for $L(0)\leq C_1(M+M^2)$ the following properties hold:
\begin{align}
	&L(t)\leq C_1(M+M^2)\quad\text{ for any }t\in [0,T], \label{eq:invar}\\
	&\|w\|_{L^\infty(0,T;L^2(\Omega))} + \sqrt{D}\|\nabla w\|_{L^2(\Omega_T)} + \frac{1}{\sqrt D}\|\partial_t w\|_{L^2(0,T;H^1(\Omega)^*)} \notag\\
	&\qquad\qquad + \|u,v\|_{L^\infty(0,T;L^2(\Gamma))\cap L^2(0,T;H^1(\Gamma))} + \|\partial_t u,\partial_t v\|_{L^2(0,T;H^1(\Gamma)^*)} \leq C_2(1+M), \label{eq:invar-2}\\
	&\|w\|_{H^1(\sigma,T;L^2(\Omega))} + \|w\|_{L^2(\sigma,T;H^2(\Omega))} + \|u,v\|_{H^1(\sigma,T;L^2(\Gamma))} + \|u,v\|_{L^2(\sigma,T;H^2(\Gamma_T))} \leq C_3(1+M). \label{eq:invar-3}
\end{align}
\end{proposition}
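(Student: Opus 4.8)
The plan is to run a direct energy estimate for $L$ itself. Multiplying \eqref{eq:org-3} by $a_6 w$ and integrating over $\Omega$ — using the Robin condition \eqref{eq:org-4} to rewrite the boundary term, $a_6 D\int_\Gamma w\,\partial_n w = \int_\Gamma w(a_5 v - a_6 w)$ — and multiplying \eqref{eq:org-1} by $u$ and \eqref{eq:org-2} by $a_5 v$ and integrating over $\Gamma$, the three identities add up to
\begin{align*}
	L'(t) = {}&-a_6 D\int_\Omega|\nabla w|^2 - \int_\Gamma|\nabla u|^2 - a_5\int_\Gamma|\nabla v|^2 - \int_\Gamma(a_6 w - a_5 v)^2\\
	&{}+ \int_\Gamma \Big(a_1+\tfrac{a_2 u}{a_3+u}+a_7 c\Big)v\,(u - a_5 v) + \int_\Gamma \tfrac{a_4 u}{1+u}\,(a_5 v - u).
\end{align*}
The weights $a_6,1,a_5$ in $L$ are chosen precisely so that all the bulk--surface exchange contributions collapse into the single nonpositive square $-\int_\Gamma(a_6 w - a_5 v)^2$; together with the Dirichlet integrals this exhausts the dissipation, and only the two reaction integrals remain to be controlled.

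For the activation term, writing $f:=a_1+\tfrac{a_2u}{a_3+u}+a_7c$ so that $0\le f\le K:=a_1+a_2+a_7\|c\|_{L^\infty(\Gamma)}$, I complete the square in $v$:
\[
	fv(u-a_5v)= -a_5 f\Big(v-\tfrac{u}{2a_5}\Big)^2 + \tfrac{1}{4a_5}fu^2\ \le\ \tfrac{K}{4a_5}\,u^2 ,
\]
hence $\int_\Gamma fv(u-a_5v)\le \tfrac{K}{4a_5}\int_\Gamma u^2$. For the Michaelis--Menten loss, since $0\le\tfrac{a_4u}{1+u}\le a_4$ and $\tfrac{a_4u^2}{1+u}\ge 0$,
\[
	\int_\Gamma \tfrac{a_4 u}{1+u}(a_5 v - u)\ \le\ a_4 a_5\int_\Gamma v\ \le\ a_4 a_5 M ,
\]
where the last step uses the mass bounds $\int_\Gamma u,\ \int_\Gamma v,\ \int_\Omega w\le M$ that follow from \eqref{eq:mass_conservation}; the same bounds give $\overline u,\overline v\le M/|\Gamma|$ and $\fint_\Omega w\le M/|\Omega|$. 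Poincaré's inequality on the closed surface $\Gamma$ and on the connected set $\Omega$ then converts the Dirichlet dissipation into control of $\int_\Gamma u^2,\int_\Gamma v^2,\int_\Omega w^2$ up to the $O(M^2)$ contributions of these means. Collecting everything one obtains a differential inequality
\[
	L'(t)\ \le\ -\delta\, L(t) + C(M+M^2),\qquad \delta>0,
\]
with $\delta$ and $C$ depending only on $\Omega, c, a_1,\dots,a_7$. I expect this last absorption to be the main obstacle: the quadratic activation term $\tfrac{K}{4a_5}\int_\Gamma u^2$ must be dominated by $\int_\Gamma|\nabla u|^2$ through Poincaré while all surviving constants stay dependent only on the data. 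Here one exploits the full $v^2$-dissipation — including $-a_5\int_\Gamma fv^2\le -a_5 a_7 c_0\int_\Gamma v^2$, genuinely available because $c\ge c_0>0$ — a trace inequality that turns the large bulk dissipation $-a_6 D\int_\Omega|\nabla w|^2$ into room in the surface estimate, and the freedom in the relative weights of $L$; making the competition close cleanly and $T$-independently is the delicate point.

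Granting $L'\le -\delta L + C(M+M^2)$, the invariance \eqref{eq:invar} follows by a continuity argument: with $C_1:=C/\delta$ one has $L'\le 0$ whenever $L\ge C_1(M+M^2)$, so that sublevel set is forward invariant. The $L^\infty_tL^2$ part of \eqref{eq:invar-2} is then immediate from \eqref{eq:invar} and $M+M^2\le(1+M)^2$; the weighted Dirichlet bounds $\sqrt D\,\|\nabla w\|_{L^2(\Omega_T)}$ and $\|u,v\|_{L^2(0,T;H^1(\Gamma))}$ follow by integrating the identity for $L'$ in time; and the dual-space bounds on $\partial_t w,\partial_t u,\partial_t v$ follow by pairing the equations with $H^1$ test functions, invoking \eqref{eq:org-4} and the trace inequality on $\Gamma$, and carefully tracking the powers of $D\ge 1$.

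Finally \eqref{eq:invar-3} is parabolic smoothing away from $t=0$. Choosing, by Chebyshev's inequality, a time $t_0\in(0,\sigma)$ at which $\|\nabla u(t_0)\|,\|\nabla v(t_0)\|,\|\nabla w(t_0)\|$ are dominated by their time-averages over $(0,\sigma)$ (finite since the gradients lie in $L^2$ in time by \eqref{eq:invar-2}), one tests \eqref{eq:org-1}, \eqref{eq:org-2}, \eqref{eq:org-3} with $\partial_t u,\,a_5\partial_t v,\,a_6\partial_t w$ on $(t_0,T)$ to get $L^\infty(\sigma,T;H^1)\cap H^1(\sigma,T;L^2)$ bounds for $u,v,w$, and then upgrades to $L^2(\sigma,T;H^2)$ via $L^2$-elliptic regularity for the Laplace--Beltrami operator (for $u,v$) and for the Neumann Laplacian (for $w$), handling the coupling through \eqref{eq:org-4} simultaneously. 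Beyond the energy step above, running this coupled parabolic bootstrap cleanly through the Robin condition is the other place that needs genuine care, though it is otherwise routine.
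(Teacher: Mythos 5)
Your energy identity and the two reaction-term bounds are correct, and the choice of weights $a_6,1,a_5$ matching the paper's. The structure of the rest (invariant region, time-integrated dissipation, duality for time derivatives, cutoff/bootstrap for interior regularity) is also sound. But there is a genuine gap exactly where you flag uncertainty: absorbing $\tfrac{K}{4a_5}\int_\Gamma u^2$ into the dissipation cannot be done by the Poincar\'e inequality as you describe. Poincar\'e gives $\int_\Gamma u^2 \le C_P \int_\Gamma |\nabla u|^2 + |\Gamma|\,\overline u^2$ with a \emph{fixed} constant $C_P$ depending only on $\Gamma$, so absorption requires $\tfrac{K C_P}{4a_5}<1$, which is a genuine restriction on the parameters and not guaranteed. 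None of the repairs you propose closes this: the $-a_5 f v^2$ dissipation has already been fully spent in completing the square in $v$ (re-splitting $fuv \le \tfrac{\epsilon}{2}u^2 + \tfrac{K^2}{2\epsilon}v^2$ and absorbing the $v^2$ part into $-a_5 a_7 c_0 v^2$ only lowers the $u^2$ coefficient to $\tfrac{K^2}{4a_5 a_7 c_0}$, still a fixed constant); the bulk $D\int|\nabla w|^2$ dissipation is of no help for surface $u^2$; and the relative weights of $L$ are already pinned by the requirement that the exchange terms collapse to $-\int_\Gamma(a_6w-a_5v)^2$, so there is no freedom left to tune.

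What actually makes the absorption work is the interpolation (Gagliardo--Nirenberg/Ladyzhenskaya on the $2$-dimensional surface) combined with the $L^1$ mass control $\|u\|_{L^1(\Gamma)}\le M$ from conservation. Concretely,
\begin{equation*}
	\int_\Gamma u^2 \;\le\; \|u-\overline u\|_{L^4(\Gamma)}^{2/3}\,\|u-\overline u\|_{L^1(\Gamma)}^{1/3}\,\|u\|_{L^2(\Gamma)} + \|u\|_{L^1(\Gamma)}^2
	\;\le\; \tfrac{\delta}{2}\|\nabla u\|_{L^2(\Gamma)}^2 + \tfrac12\|u\|_{L^2(\Gamma)}^2 + C(\delta)\,M^2 ,
\end{equation*}
and after moving the $\tfrac12\|u\|_{L^2}^2$ term to the left this gives $\int_\Gamma u^2 \le \delta\,\|\nabla u\|_{L^2(\Gamma)}^2 + C(\delta) M^2$ with $\delta>0$ \emph{arbitrary}. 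It is this arbitrarily small $\delta$ (and the analogous bounds for $v$ and $w$) that lets you absorb the quadratic reaction contributions regardless of the size of $K/(4a_5)$, producing $L' \le -L + C_1(M+M^2)$ without any parameter restriction. You must replace the Poincar\'e step by this interpolation-plus-mass argument; without it the claimed differential inequality does not follow.

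One further small remark on your smoothing step: your ``pick a good time $t_0$ by Chebyshev, then test with $\partial_t u, a_5\partial_t v, a_6\partial_t w$ and bootstrap'' is a valid and more elementary alternative to the paper's cut-off argument (multiplying $u,v,w$ by a smooth $\varphi$ vanishing near $t=0$ and invoking maximal $L^2$-parabolic regularity with zero data). Both give \eqref{eq:invar-3}; the paper's route avoids the explicit boundary-coupling bookkeeping in the $\partial_t$-test, while yours avoids citing maximal regularity.
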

\begin{proof}
We test \eqref{eq:org-1}-\eqref{eq:org-3} with $u,v$ and $w$, respectively, to obtain
\begin{align}
	&\frac{d}{dt}L(t) 
	+\int_\Omega a_6 D|\nabla w|^2 +\int_\Gamma \big(|\nabla u|^2+a_5|\nabla v|^2\big) \notag\\
	= &\int_\Gamma \Big( (a_6w-a_5v) (a_5v-a_6w) + (u-a_5v)\big((a_1+\frac{a_2 u}{a_3+u}+a_7c)v-\frac{a_4u}{1+u}\big)\Big) \notag\\
	\leq & C\int_\Gamma (v+u^2+v^2) \notag\\
	\leq & -L(t) + C\int_\Gamma (v+u^2+v^2) + \frac{a_6}{2}\int_\Omega w^2, \label{eq:invar-1}
\end{align}
where here and in the following $C$ only depends on $\Omega$, $c$ and $a_1,\dots,a_7$, unless additional dependencies are indicated.
We now use interpolation estimates, Sobolev embedding and Poincar{\'e} inequality to obtain, with $\overline{u} = \fint_\Gamma u$
\begin{align*}
	\int_\Gamma  u^2 &= \int_\Gamma u(u-\overline{u}) + \|u\|_{L^1(\Gamma)}^2 \\
	&\leq  \|u-\overline{u}\|_{L^2(\Gamma)}\|u\|_{L^2(\Gamma)}+ \|u\|_{L^1(\Gamma)}^2\nonumber\\
	&\leq \|u-\overline{u}\|_{L^4(\Gamma)}^{\frac{2}{3}}\|u-\overline{u}\|_{L^1(\Gamma)}^{\frac{1}{3}}\|u\|_{L^2(\Gamma)}+ \|u\|_{L^1(\Gamma)}^2\nonumber\\
	&\leq C\|\nabla u\|_{L^2(\Gamma)}^{\frac{2}{3}}\|u-\overline{u}\|_{L^1(\Gamma)}^{\frac{1}{3}}\|u\|_{L^2(\Gamma)}+ \|u\|_{L^1(\Gamma)}^2\nonumber\\
	&\leq \frac{\delta_1}{2}\|\nabla u\|_{L^2(\Gamma)}^2 + \frac{1}{2}\|u\|_{L^2(\Gamma)}^2 + C(\delta_1)\|u\|_{L^1(\Gamma)}^2.
\end{align*}
Together with the mass conservation property \eqref{eq:mass_conservation} and analogous calculations for $v,w$ we deduce that
\begin{align*}
	\int_\Gamma u^2 &\leq \delta_1 \|\nabla u\|_{L^2(\Gamma)}^2 + C(\delta_1)M^2,\quad
	\int_\Gamma v^2 \leq \delta_2 \|\nabla v\|_{L^2(\Gamma)}^2 + C(\delta_2)M^2,\\
	\int_\Omega w^2 &\leq \delta_3 \|\nabla w\|_{L^2(\Omega)}^2 + C(\delta_3)M^2.
\end{align*}
Using this in \eqref{eq:invar-1} with appropriate choices of $\delta_1,\delta_2,\delta_3$ yields
\begin{equation}
	\frac{d}{dt}L(t) + \frac{1}{2}\int_\Omega a_6 D|\nabla w|^2 +\frac{1}{2}\int_\Gamma \big(|\nabla u|^2+a_5|\nabla v|^2\big) \leq  -L(t) + C_1(M+M^2). \label{eq:dtL}
\end{equation}
Since $L(0)\leq C_1(M+M^2)$ this implies by an invariant region principle \eqref{eq:invar} and the $L^\infty(L^2)$ bounds in \eqref{eq:invar-2}. We also deduce from \eqref{eq:dtL} the $L^2(L^2)$ bounds for the gradients. Then, using the equations, this yields the required bounds for the time derivatives in \eqref{eq:invar-2}.

To prove \eqref{eq:invar-3} we fix $\sigma>0$ and a cut-off function $\varphi\in C^\infty(\R)$ with $\varphi=0$ on $[0,\frac{\sigma}{2}]$, $\varphi=1$ on $[\sigma,\infty)$ and $0\leq \varphi\leq 1$, $0\leq \varphi^\prime\leq \frac{4}{\sigma}$.
Then the functions $\tilde u(\cdot,t)=\varphi(t) u(\cdot,t)$, $\tilde v(\cdot,t)=\varphi(t) v(\cdot,t)$, $\tilde w(\cdot,t)=\varphi(t) w(\cdot,t)$ are solutions of
\begin{align}
	\partial _{t}\tilde u  -\Delta \tilde u &  = \tilde f + \varphi^\prime u\qquad\text{ on }\Gamma\times (0,T)\label{eq:tilde-1}\\
	\partial _{t}\tilde v  -\Delta \tilde v  &  =-\tilde f -a_5 \tilde v +a_6\tilde w + \varphi^\prime v\qquad\text{ on }\Gamma\times (0,T)\label{eq:tilde-2}\\
	\partial _{t} \tilde w  -D\Delta \tilde w  &  = \varphi^\prime w\qquad\text{ in } \Omega\times (0,T)\label{eq:tilde-3}\\
-D\frac{\partial \tilde w}{\partial n}  &  = a_5 \tilde v -a_6\tilde w\qquad\text{ on }\Gamma\times (0,T)\label{eq:tilde-4}%
\end{align}
with zero initial conditions and with $\tilde f=\big(  a_{1}+\frac{a_{2}u}{a_{3}%
+u}+a_7c\big)\tilde v-\frac{a_{4}\tilde u}{1+u}$. We deduce from \eqref{eq:invar-2} that  the right-hand sides of \eqref{eq:tilde-1}, \eqref{eq:tilde-2}, \eqref{eq:tilde-4} are all bounded in $L^2(\Gamma_T)$ and that the right-hand side of \eqref{eq:tilde-3} is bounded in $L^2(\Omega_T)$ by constants that only depends on $M,\sigma,T$, $\Omega$, $c$ and $a_1,\dots,a_7$. By $L^2$-regularity theory we obtain $L^2(0,T;H^2(\Gamma))\cap H^1(0,T;L^2(\Gamma))$-bounds for $\tilde u,\tilde v$ and then an $L^2(0,T;H^2(\Omega))\cap H^1(0,T;L^2(\Omega))$-bound for $\tilde w$ that only depends on $M,\sigma,T$, $\Omega$, $c$ and $a_1,\dots,a_7$. Since $\varphi=1$ on $[\sigma,T]$ the estimates for $\tilde u,\tilde v,\tilde w$ yield \eqref{eq:invar-3}.
\end{proof}

We next prove the existence of stationary states using Theorem \ref{thm:existence1} and the a-priori estimates from Proposition \ref{prop:invar}.

\begin{theorem}\label{thm:stat-states-Dpositive}
For any $M>0$ there exists a nonnegative solution $(u,v,w)$, such that $u,v \in C^2(\Gamma)$, $w\in  C^2(\overline{\Omega})$, of the system
\begin{align}
	0  &  =\Delta u+\left(  a_{1}+\frac{a_{2}u}{a_{3}%
+u}+a_7c\right)  v-\frac{a_{4}u}{1+u}\qquad&&\textrm{ on }\Gamma\label{eq:org-stat-1}\\
	0  &  =\Delta v-\left(  a_{1}+\frac{a_{2}u}{a_{3}%
+u}+a_7c\right)  v+\frac{a_{4}u}{1+u}-a_{5}v+a_{6}w\qquad&&\textrm{ on }\Gamma\label{eq:org-stat-2}\\
	0  &  =D\Delta w\qquad&&\textrm{ in } \Omega\label{eq:org-stat-3}\\
	-D\frac{\partial w}{\partial n}  &  =-a_{5}v+a_{6}w\qquad&&\textrm{ on }\Gamma\label{eq:org-stat-4}\\
M&=\int_\Omega w + \int_\Gamma (u+v)\label{eq:org-stat-5}.
\end{align}
Moreover,
 \begin{equation}\label{eq:l2apriori-ss}
  \|u,v\|_{H^1(\Gamma)}^2 + \|w\|_{L^2(\Gamma)}^2 + D\|\nabla w\|_{L^2(\Omega)}^2 \leq C(M)\,.
 \end{equation}
\end{theorem}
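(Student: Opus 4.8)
\emph{Proof idea.} The plan is to realise a steady state as a limit, with vanishing period, of time-periodic solutions of \eqref{eq:org-1}--\eqref{eq:org-initial} produced by Schauder's fixed point theorem. First I would set
\[
X:=\Big\{(u_0,v_0,w_0)\in L^2(\Gamma)^2\times L^2(\Omega):\ u_0,v_0,w_0\ge0,\ \textstyle\int_\Gamma(u_0+v_0)+\int_\Omega w_0=M,\ L(0)\le C_1(M+M^2)\Big\},
\]
with $L$ the functional from Proposition~\ref{prop:invar}; after enlarging $C_1$ if necessary (which does not affect the validity of Proposition~\ref{prop:invar}) the set $X$ is nonempty -- e.g.\ it contains $(0,0,M/|\Omega|)$ -- and it is clearly closed, bounded and convex in $L^2(\Gamma)^2\times L^2(\Omega)$. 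For $T>0$ let $\mathcal S_T$ send initial data in $X$ to the solution of \eqref{eq:org-1}--\eqref{eq:org-initial} at time $T$. By Theorem~\ref{thm:existence1}, mass conservation \eqref{eq:mass_conservation} and the invariant region property \eqref{eq:invar}, $\mathcal S_T$ maps $X$ into itself; it is continuous (by the usual energy estimate for the difference of two solutions, using that $s\mapsto s/(a_3+s)$ and $s\mapsto s/(1+s)$ are globally Lipschitz on $[0,\infty)$ together with the a priori $L^2$-bounds \eqref{eq:invar-2}); and by \eqref{eq:invar-3} it maps $X$ into a bounded subset of $H^1(\Gamma)^2\times H^1(\Omega)$, hence a precompact subset of $L^2(\Gamma)^2\times L^2(\Omega)$. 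Schauder's theorem then gives, for each $n\in\N$, a fixed point of $\mathcal S_{1/n}$, i.e.\ a nonnegative weak solution $(u_n,v_n,w_n)$ which is $T_n$-periodic in time with $T_n=1/n$, hence a weak solution on the fixed interval $[0,3]$.

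Next I would apply Proposition~\ref{prop:invar} on $[0,3]$ (with $\sigma=1$) to each $(u_n,v_n,w_n)$; since these satisfy $L(0)\le C_1(M+M^2)$, the bounds \eqref{eq:invar}--\eqref{eq:invar-3} are uniform in $n$. In particular $\partial_tu_n,\partial_tv_n$ are bounded in $L^2(1,3;L^2(\Gamma))$ and $\partial_tw_n$ in $L^2(1,3;L^2(\Omega))$ uniformly in $n$; since the integrands are $T_n$-periodic this forces $\int_0^{T_n}\|\partial_tu_n\|_{L^2(\Gamma)}^2\,dt\le C(1+M)^2T_n$ and likewise for $v_n,w_n$, so that, with $\hat u_n:=\fint_1^2u_n(t)\,dt$ and analogously $\hat v_n,\hat w_n$,
\[
\sup_{t\in[0,T_n]}\|u_n(t)-\hat u_n\|_{L^2(\Gamma)}+\sup_{t\in[0,T_n]}\|v_n(t)-\hat v_n\|_{L^2(\Gamma)}+\sup_{t\in[0,T_n]}\|w_n(t)-\hat w_n\|_{L^2(\Omega)}\le C(1+M)T_n\to0.
\]
The averages are bounded in $H^2(\Gamma)$ resp.\ $H^2(\Omega)$, so along a subsequence $\hat u_n\rightharpoonup u_\infty$, $\hat v_n\rightharpoonup v_\infty$ in $H^2(\Gamma)$ and $\hat w_n\rightharpoonup w_\infty$ in $H^2(\Omega)$, with strong convergence in $H^1$, in $C^0$, and of the traces $\hat w_n|_\Gamma\to w_\infty|_\Gamma$ in $L^2(\Gamma)$; all limits are nonnegative.

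Finally I would pass to the limit in the equations averaged over one period. Writing $R(u,v,c):=(a_1+\frac{a_2u}{a_3+u}+a_7c)v-\frac{a_4u}{1+u}$ and using $\fint_0^{T_n}\partial_t(\cdot)\,dt=0$ by periodicity, averaging \eqref{eq:org-1}--\eqref{eq:org-4} over $[0,T_n]$ yields $0=\Delta\hat u_n+\fint_0^{T_n}R(u_n,v_n,c)$, $0=\Delta\hat v_n-\fint_0^{T_n}R(u_n,v_n,c)-a_5\hat v_n+a_6\hat w_n|_\Gamma$, $0=D\Delta\hat w_n$, the weak Neumann condition $0=-D\int_\Omega\nabla\hat w_n\cdot\nabla\phi+\int_\Gamma(a_5\hat v_n-a_6\hat w_n)\phi$ for all $\phi\in H^1(\Omega)$, and $\int_\Omega\hat w_n+\int_\Gamma(\hat u_n+\hat v_n)=M$. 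Since $R$ is bounded and $|R(u,v,c)-R(u',v',c)|\le C(|v-v'|+(1+|v'|)|u-u'|)$ on $[0,\infty)^2$, the uniform-in-time closeness above together with the convergences of $\hat u_n,\hat v_n$ gives $\fint_0^{T_n}R(u_n,v_n,c)\to R(u_\infty,v_\infty,c)$ in $L^2(\Gamma)$; letting $n\to\infty$ shows $(u_\infty,v_\infty,w_\infty)$ is a nonnegative weak solution of \eqref{eq:org-stat-1}--\eqref{eq:org-stat-5}. Classical elliptic $L^p$- and Schauder-type regularity (cf.\ the remark after Assumption~\ref{ass:pbm2}; note that $w_\infty$ solves a linear Robin problem on the $C^3$-domain $\Omega$) then upgrades this to $u_\infty,v_\infty\in C^2(\Gamma)$, $w_\infty\in C^2(\overline\Omega)$. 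The estimate \eqref{eq:l2apriori-ss} follows from the bounds of Proposition~\ref{prop:invar} on $[0,3]$ and weak lower semicontinuity: $\|u_\infty\|_{H^1(\Gamma)}^2,\|v_\infty\|_{H^1(\Gamma)}^2\le\liminf\fint_1^2\|u_n,v_n\|_{H^1(\Gamma)}^2\le C(M)$, $\|w_\infty\|_{L^2(\Gamma)}^2\le C\,\liminf\|\hat w_n\|_{H^1(\Omega)}^2\le C(M)$, and $D\|\nabla w_\infty\|_{L^2(\Omega)}^2\le\liminf D\fint_1^2\|\nabla w_n\|_{L^2(\Omega)}^2\le C(M)$, using $D\ge1$.

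The step I expect to be the main obstacle is the passage from Schauder's theorem to a genuinely \emph{stationary} solution: the fixed point argument only delivers time-periodic solutions, and collapsing them to a steady state requires (i) sending the period $T_n\to0$, which in turn requires (ii) estimates uniform in $n$ -- obtained by regarding each periodic orbit as a solution on the fixed interval $[0,3]$ and invoking Proposition~\ref{prop:invar} there -- and (iii) the convergence of the time-averaged reaction term $\fint_0^{T_n}R(u_n,v_n,c)$ to $R$ evaluated at the limit, which genuinely uses $T_n\to0$ (it would fail for a fixed period because $R$ is nonlinear). The continuity and compactness of $\mathcal S_T$ and the elliptic bootstrap for the regularity are routine by comparison.
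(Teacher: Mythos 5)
Your proposal is correct, and it follows the same overall strategy as the paper: Schauder's fixed point theorem for the time-$\tau$ solution map $\mathcal{S}_\tau$ on the invariant set $X$ yields $T_n$-periodic solutions, and one then sends $T_n\to 0$ using the smoothing estimates of Proposition~\ref{prop:invar} applied on a fixed interval such as $[1,3]$. You have also correctly identified the genuinely delicate point (converting periodic orbits into a stationary solution).

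Where you diverge from the paper is in exactly that final step, and the difference is substantive enough to note. The paper first passes to the limit in the \emph{parabolic} problem on $(1,2)$ (weak limits in $H^1(1,2;L^2)\cap L^2(1,2;H^2)$, Aubin--Lions, strong convergence at a.e.\ $t$) and then shows that the limit is constant in time: it identifies $u_\ast=\lim u_\tau(1)$ and uses the embedding $H^1(1,2;L^2)\hookrightarrow C^{1/2}([1,2];L^2)$ to conclude $u(t)=u_\ast$ for a.e.\ $t$. You instead exploit periodicity directly: the uniform $L^2(1,3;L^2)$-bound on $\partial_t u_n$ together with the fact that $[1,3]$ contains exactly $2n$ periods forces $\int_0^{T_n}\|\partial_t u_n\|^2\le CT_n$, hence $\sup_t\|u_n(t)-\hat u_n\|_{L^2}\le CT_n\to0$; you then average the equations over one period (which annihilates the $\partial_t$ terms and produces elliptic equations for $\hat u_n,\hat v_n,\hat w_n$) and pass to the limit in these elliptic equations, using the $H^2\hookrightarrow\hookrightarrow C^0$ compactness of the averages to handle the nonlinear term $\fint R(u_n,v_n,c)$. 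Both routes work; yours bypasses the intermediate time-dependent limit problem entirely and lands directly on the stationary system, at the modest cost of the Lipschitz estimate on $R$ and the observation that $\hat v_n$ is uniformly bounded in $L^\infty(\Gamma)$ (via $H^2(\Gamma)$), which is needed to control the mixed term $(1+|\hat v_n|)|u_n-\hat u_n|$. The concluding regularity bootstrap and the derivation of \eqref{eq:l2apriori-ss} from weak lower semicontinuity and \eqref{eq:invar-2} with $D\ge1$ match the paper.
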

\begin{proof}
Denote by $X$ the set of all $(u_0,v_0,w_0)\in L^2(\Gamma)^2\times L^2(\Omega)$ such that
\begin{align*}
	\int_\Omega w_0 + \int_\Gamma (u_0+v_0)= M \qquad\text{ and }\qquad\int_\Omega \frac{a_6}{2} w_0^2 + \int_\Gamma \frac{1}{2}(u_0^2+a_5 v_0^2) \leq C_1(M+M^2),
\end{align*}
with $C_1$ from Proposition \ref{prop:invar}.
Fix $T\geq 3$. By Theorem \ref{thm:existence1} for any $(u_0,v_0,w_0)\in X$ there exists a unique weak solution $(u,v,w)$ to the evolution problem in $(0,T)$.
By Proposition \ref{prop:invar} and by the mass conservation property \eqref{eq:mass_conservation} we deduce that $(u(t),v(t),w(t))\in X$ for all $t\in (0,T)$. Moreover, the bounds
\eqref{eq:invar-2}, \eqref{eq:invar-3} are satisfied for any $\sigma>0$. In particular (see e.g. \cite{Evan10}, Theorem 3, Section 5.9)  $w\in C^0([0,T];L^2(\Omega))$ and $u,v \in C^0([0,T];L^2(\Gamma))$. Similarly, for any $\sigma>0$ we deduce $w\in C^0([\sigma,T];H^1(\Omega))$ and $u,v \in C^0([\sigma,T];H^1(\Gamma))$ with bounds that depend only on $\sigma$ and the data.

In particular, for any $\tau\in (0,1)$ the map
\begin{equation*}
	S(\tau): X\to X,\quad  (u_0,v_0,w_0)\mapsto (u(\tau),v(\tau),w(\tau))
\end{equation*}
is well-defined, and by Sobolev embedding compact. Considering differences of two solutions with initial data $(w_0,u_0,v_0)$ and $(\tilde w_0,\tilde u_0,\tilde v_0)$, respectively, we obtain by similar calculations as in Proposition \ref{prop:invar} that
\begin{equation*}
	\|S(\tau)(u_0,v_0,w_0)-S(\tau)(\tilde u_0,\tilde v_0,\tilde w_0)\|_{L^2(\Gamma)^2\times L^2(\Omega)} \leq C\|(u_0,v_0,w_0)-(\tilde u_0,\tilde v_0,\tilde w_0)\|_{L^2(\Gamma)^2\times L^2(\Omega)}.
\end{equation*}
Therefore $S(\tau)$ is continuous for any $0<\tau<T$.

By Schauders fixed point theorem for any $\tau\in (0,1)$, $\tau=1/N$, there exists a fixed point $(u^\tau,v^\tau,w^\tau)\in X$ of $S(\tau)$. Since the system  \eqref{eq:org-1}-\eqref{eq:org-4} is autonomous and solutions are unique we obtain a solution $(u_\tau,v_\tau,w_\tau)$ of this system on $(0,T)$ with the property that
\begin{equation*}
	(u_\tau(k\tau),v_\tau(k\tau),w_\tau(k\tau)) = (u^\tau,v^\tau,w^\tau) \quad\text{ for all }k\in\N.
\end{equation*}

By Proposition \ref{prop:invar} we obtain for any $N\in\N$ uniform bounds
\begin{equation*}
	\|w_\tau \|_{H^1((1,2);L^2(\Omega))\cap L^2((1,2);H^2(\Omega))} + \|u_\tau,v_\tau\|_{H^1((1,2);L^2(\Gamma))\cap L^2((1,2);H^2(\Gamma))} \leq C_3(1+M).
\end{equation*}
Therefore there exists a subsequence $N\to\infty$ (not relabeled), hence $\tau=\tau(N)\to 0$, such that
\begin{align*}
	&w_\tau\weakto w\text{ in }H^1((1,2);L^2(\Omega))\cap L^2((1,2);H^2(\Omega)),\\
	&u_\tau\weakto u\text{ in }H^1((1,2);L^2(\Gamma))\cap L^2((1,2);H^2(\Gamma)),\\
	&v_\tau\weakto v\text{ in }H^1((1,2);L^2(\Gamma))\cap L^2((1,2);H^2(\Gamma)).
\end{align*}
It is easy to show that $(u,v,w)$ is a solution to \eqref{eq:org-1}-\eqref{eq:org-4} on the time interval $(1,2)$. Moreover, by the Aubin-Lions lemma, we have strong $L^2(\Gamma\times (1,2))^2\times L^2(\Omega\times (1,2))$-convergence of $(u_\tau,v_\tau,w_\tau)$ to $(u,v,w)$ and for a set $R\subset [1,2]$ of full measure and all $t\in R$ strong convergence in $L^2(\Gamma)^2\times L^2(\Omega)$ of $(u_\tau(t),v_\tau(t),w_\tau(t))$ to $(u(t),v(t),w(t))$.

Next, since by \cite{Evan10}, Chapter 5.9,  we have uniform bounds in $C^0([1,2];H^1(\Gamma)^2\times H^1(\Omega))$ we in addition have $L^2$-convergence of $(u^\tau,v^\tau,w^\tau)=(u_\tau(1),v_\tau(1),w_\tau(1))$ to a limit $(u_*,v_*,w_*)\in X\cap H^1(\Gamma)^2\times H^1(\Omega)$.

Now choose $t\in R$ arbitrary. Then there exists a sequence $t_N$ such that $t_N\in \N\tau$ and $t_N\to t$.  We then compute that in $L^2(\Gamma)$
\begin{align*}
	\| u(t)- u_*\| &\leq \|u(t)-u_\tau(t)\| + \|u_\tau(t)-u_\tau(t_N)\| + \|u_\tau(t_N)-u_*\| \\
	&\leq \|u(t)-u_\tau(t)\| + C|t_N-t|^{\frac{1}{2}}\|u_\tau\|_{C^{\frac{1}{2}}([1,2];L^2(\Omega))} + \|u^\tau-u_*\|\,\to\, 0,
\end{align*}
where we have used that $u_\tau$ is uniformly bounded in $H^1((1,2);L^2(\Omega))\hookrightarrow  C^{\frac{1}{2}}([1,2];L^2(\Omega))$. This shows $u(t)=u_*$ for all $t\in R$. Similarly we deduce $v(t)=v_*$ and $w(t)=w_*$ for all $t\in R$. Since $(u,v,w)\in C^0([1,2];H^1(\Gamma))^2\times C^0([1,2];H^1(\Omega))$ we have $(u,v,w)=(u_*,v_*,w_*)$ on $[1,2]$.

We therefore have obtained a time-independent solution of \eqref{eq:org-1}-\eqref{eq:org-4} on $(1,2)$ that satisfies the mass constraint. This gives the required solution of the stationary system.

Starting from the $H^1$ regularity of the solution and the Sobolev embeddings $H^1(\Gamma)\hookrightarrow L^p(\Gamma)$ for any $1\leq p<\infty$, and $H^1(\Omega) \hookrightarrow L^4(\Gamma)$ we deduce from classical regularity that $u\in W^{2,p}(\Gamma)$, $v\in W^{2,4}(\Gamma)$. Since $W^{2,4}(\Gamma)\hookrightarrow C^{1,\frac{1}{2}}(\Gamma)$ we deduce from \cite[Theorem 6.31]{GiTr01} that $w\in C^{2,\frac{1}{2}}(\overline{\Omega})$. Standard Schauder regularity then implies higher regularity for $v,u$.

The estimate \eqref{eq:l2apriori-ss} is obtained from $(u,v,w)\in X$ and \eqref{eq:invar-2}.
\end{proof}

\section{The limit of infinite cytosolic diffusivity: $D=\infty$}\label{S.Dinfty}
\subsection{Passage to the limit $D\to \infty$}
In biological cells the diffusion in the cytosol is typically much faster than the lateral diffusion on the membrane. This means in our rescaling that $D\gg 1$ and motivates to consider an asymptotic reduction $D\to\infty$. We obtain a system of two surface PDEs for $u,v$, whereas the variable $w$ becomes spatially constant and is determined by the prescribed total amount of protein in the cell. This introduces a nonlocal term in the PDE system for $u,v$.
\begin{theorem}\label{thm:stat-states-D=infty}
For any $M>0$ there exist $u,v \in C^2(\Gamma)$, and a nonnegative constant $w$, such that
\begin{align}
0  &  =\Delta u+\left(  a_{1}+\frac{a_{2}u}{a_{3}+u}+a_7c\right)  v-\frac{a_{4}u}{1+u}\qquad \mbox{ on } \Gamma\,, \label{eq:AtDInf1-0}\\
0  &  =\Delta v-\left(  a_{1}+\frac{a_{2}u}{a_{3}+u}+a_7c\right)  v+\frac{a_{4}u}{1+u}-a_5v+a_6 w\qquad \mbox{ on } \Gamma\,,\label{eq:AtDInf2-0}\\
w &  =M-\int_{\Gamma}(u+v)\,.
\label{eq:AtDInf3-0}%
\end{align}
\end{theorem}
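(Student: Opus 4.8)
\emph{Proof strategy.} The plan is to obtain the solution as a limit of the finite-diffusivity stationary solutions from Theorem~\ref{thm:stat-states-Dpositive}, exploiting that the a priori bound \eqref{eq:l2apriori-ss} controls $\|\nabla w\|_{L^2(\Omega)}$ with an extra factor $D$. First I would fix, for each $D\geq 1$, a nonnegative stationary solution $(u_D,v_D,w_D)$ of \eqref{eq:org-stat-1}--\eqref{eq:org-stat-5} provided by that theorem. From \eqref{eq:l2apriori-ss} the norms $\|u_D,v_D\|_{H^1(\Gamma)}$ and $\|w_D\|_{L^2(\Gamma)}$ are bounded uniformly in $D$, while $\|\nabla w_D\|_{L^2(\Omega)}^2\leq C(M)/D\to 0$. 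Since $u_D,v_D\geq 0$, the mass constraint \eqref{eq:org-stat-5} gives $0\leq\int_\Omega w_D=M-\int_\Gamma(u_D+v_D)\leq M$, so $\fint_\Omega w_D$ stays bounded; together with the Poincar\'e inequality $\|w_D-\fint_\Omega w_D\|_{L^2(\Omega)}\leq C\|\nabla w_D\|_{L^2(\Omega)}$ and the corresponding trace estimate on $\Gamma$ this shows that, along a subsequence, $w_D\to\bar w$ strongly in $H^1(\Omega)$ and in $L^2(\Gamma)$ for some constant $\bar w\geq 0$.

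Next I would extract, using the uniform $H^1(\Gamma)$ bounds and the compact embedding $H^1(\Gamma)\embeds L^p(\Gamma)$ for every $p<\infty$ (here $\dim\Gamma=2$), a further subsequence along which $u_D\weakto u$ and $v_D\weakto v$ weakly in $H^1(\Gamma)$, strongly in every $L^p(\Gamma)$, and pointwise a.e.\ on $\Gamma$. With this I would pass to the limit in the weak formulation of \eqref{eq:org-stat-1}--\eqref{eq:org-stat-2}: the diffusion terms converge by weak $H^1$-convergence; the bounded nonlinearities $\tfrac{a_4u_D}{1+u_D}$ and $\tfrac{a_2u_D}{a_3+u_D}$ converge strongly in every $L^q(\Gamma)$ by dominated convergence, hence their products with $v_D$ converge in $L^2(\Gamma)$; and $a_7cv_D$, $a_5v_D$, $a_6w_D$ converge in $L^2(\Gamma)$ since $c$ is bounded and $w_D\to\bar w$ in $L^2(\Gamma)$. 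This yields a nonnegative weak solution $(u,v,\bar w)$ of \eqref{eq:AtDInf1-0}--\eqref{eq:AtDInf2-0}; passing to the limit in \eqref{eq:org-stat-5} via strong $L^1$-convergence on $\Gamma$ and $\Omega$ gives $\int_\Gamma(u+v)+|\Omega|\bar w=M$, i.e.\ the nonlocal relation \eqref{eq:AtDInf3-0} (with the normalization $|\Omega|=1$; otherwise $\bar w=(M-\int_\Gamma(u+v))/|\Omega|$).

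For the regularity claim I would run the same bootstrap as in the proof of Theorem~\ref{thm:stat-states-Dpositive}: since $u,v\in H^1(\Gamma)\embeds L^p(\Gamma)$ and $\bar w$ is constant, the right-hand sides of \eqref{eq:AtDInf1-0}--\eqref{eq:AtDInf2-0} lie in $L^p(\Gamma)$ for all $p<\infty$, so elliptic $L^p$-regularity for the Laplace--Beltrami operator gives $u,v\in W^{2,p}(\Gamma)\embeds C^{1,\gamma}(\Gamma)$, and Schauder regularity upgrades this to $u,v\in C^2(\Gamma)$.

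I expect the construction to be largely routine; the one point deserving care is that the limit of $w_D$ is genuinely a spatial constant — this is exactly what the factor $D$ in \eqref{eq:l2apriori-ss} buys us — and, to a lesser extent, the identification of the limits of the reaction nonlinearities, which relies on the compact Sobolev embedding on the two-dimensional surface rather than merely on the weak $H^1$ bounds. An alternative, self-contained route would be a direct Schauder fixed-point argument for the nonlocal system \eqref{eq:AtDInf1-0}--\eqref{eq:AtDInf3-0}, but the limiting argument above is shorter given what is already available.
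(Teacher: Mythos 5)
Your proposal is correct and takes essentially the same route as the paper: pass to the limit along a sequence $D_k\to\infty$ in the stationary solutions from Theorem~\ref{thm:stat-states-Dpositive}, using the bound \eqref{eq:l2apriori-ss} to get weak $H^1$-compactness and the vanishing of $\sqrt{D}\|\nabla w_D\|_{L^2(\Omega)}$ to force the limit $w$ to be constant, then bootstrap regularity. You supply more detail than the paper (the Poincar\'e argument for constancy of $w$, and the observation that \eqref{eq:AtDInf3-0} as written tacitly assumes the normalization $|\Omega|=1$), but the underlying argument is identical.
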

\begin{proof}
 Consider a sequence $(D_k)_k$ with $D_k\to\infty$ as $k\to\infty$ and let $(u_k,v_k,w_k)$ be a solution to \eqref{eq:org-stat-1}-\eqref{eq:org-stat-5} with $D$ replaced by $D_k$.
 By the bound \eqref{eq:l2apriori-ss} and standard compactness arguments we deduce the existence of a subsequence (not relabeled) $D_k\to\infty$ such that $(u_k,v_k,w_k)$ converges weakly in $H^1(\Gamma)^2\times H^1(\Omega)$ to some limit $(u,v,w)$. Moreover, by the Rellich theorem and the compactness of the trace mapping $H^1(\Omega)\to L^2(\Gamma)$ we have strong convergence in $L^2(\Gamma)^2\times L^2(\Omega)$.
 This allows to pass to the limit in the equations \eqref{eq:org-stat-1}, \eqref{eq:org-stat-2} and \eqref{eq:org-stat-5} and to deduce \eqref{eq:AtDInf1-0}-\eqref{eq:AtDInf3-0}. The higher regularity of the solution is deduced from standard regularity results as above.
 \end{proof}

\subsection{Derivation of the obstacle problem}

Our goal in this section is to consider a suitable scaling limit of the system \eqref{eq:AtDInf1-0}-\eqref{eq:AtDInf3-0}.
More precisely, we introduce the following rescalings

\begin{equation}
a_{4}\leadsto\frac{a_{4}}{\varepsilon}\,,\quad a_{5}\leadsto\frac{a_{5}}{\varepsilon}\,,\quad  a_{6}\leadsto\frac{a_{6}%
}{\varepsilon}\,,\quad  a_{7}\leadsto\frac{a_{7}}{\varepsilon} \label{E2}%
\end{equation}
and assume that the coefficients $a_{1},a_{2},a_3$ are kept of order one and that $a_4,a_5,a_6,a_7>0$. Upon redefining $c$, without loss of generality we can assume that $a_7=1$.
Moreover, in order to obtain a nontrivial limit
we also need to assume that the total number of proteins $M$
rescales like $\frac{1}{\varepsilon}$, more precisely $M=\frac{m}{\varepsilon}$ for some fixed $m>0$. Denoting by $(u_\eps,v_\eps,w_\eps)$ the solution of the rescaled system \eqref{eq:AtDInf1-0}-\eqref{eq:AtDInf3-0} an asymptotic expansion shows that we should expect that $U_\eps:=\eps u_\eps$, $v_\eps$, $w_\eps$ are of order one.
We therefore rewrite the rescaled system as
\begin{align}
  0  &  =\Delta U_\eps+\left(  \eps a_{1}+ \frac{\eps a_{2}U_\eps}{\eps a_{3}+U_\eps}+c\right)  v_\eps-\frac{a_{4}U_\eps}{\eps +U_\eps}\,\qquad \mbox{ on }\Gamma\,, \label{eq:AtDInf1}\\
  0  &  = \eps \Delta v_\eps-\left(  \eps a_{1}+ \frac{\eps a_{2}U_\eps}{\eps a_{3}+U_\eps}+c\right)  v_\eps+\frac{a_{4}U_\eps}{\eps +U_\eps}-a_5v_\eps+a_6 w_\eps  \qquad \mbox{ on } \Gamma\,, \label{eq:AtDInf2}\\
  \eps w_\eps &  =m-\int_{\Gamma}\big(U_\eps+\eps v_\eps\big)\,. \label{eq:AtDInf3}%
\end{align}
We continue to assume that $w_\eps\geq 0$, hence $\int_{\Gamma} \big(U_\eps+\eps v_\eps\big) \leq m$.

For convenience we state our results mainly in terms of a function $g:\Gamma\to (0,1)$ instead of $c$,
\begin{equation}
  g(x)  =\frac{c(x)}{c(x)+a_{5}}, \quad \ x\in\Gamma. \label{defg}%
\end{equation}

\begin{theorem}
\label{Limeps}
Suppose that $\left\{(U_{\varepsilon},v_{\varepsilon},w_{\eps})\right\}_{\varepsilon>0}$ is a
family of nonnegative solutions of (\ref{eq:AtDInf1})-(\ref{eq:AtDInf3}) and fix an arbitrary sequence $(\varepsilon_{j})_{j\in\N}$ with $\lim_{j\rightarrow\infty}\varepsilon_{j}=0$.

Then, there exist a subsequence $\eps_j\to 0$ (not relabeled), $\baru\in W^{2,p}(\Gamma)$ for any $1\leq p<\infty$, and a measurable function $\xi\colon\Gamma\rightarrow [0,1]$ such that with $j\to\infty$
\begin{align*}
	U_{\varepsilon_{j}}  &\to  \baru \qquad \text{ in }L^1(\Gamma),\\
	\frac{U_{\varepsilon_{j}}}{\varepsilon_{j}+U_{\varepsilon_{j}}}  &  \overset{*}{\rightharpoonup} \xi\qquad  \text{ weakly* in }L^{\infty}(\Gamma),
\end{align*}
and such that $\xi u = u$ almost everywhere on $\Gamma$.

Moreover, there exists $\alpha\in\R$ such that the functions $\baru$ and $\xi$ satisfy almost everywhere on $\Gamma$
\begin{equation}
	-\Delta\baru=\alpha g-a_{4}(1-g) \xi \label{B4}%
\end{equation}
and we have
\begin{equation}
  \int_{\Gamma}\baru=m. \label{massU}%
\end{equation}
\end{theorem}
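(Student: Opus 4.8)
The plan is to extract the limit by passing to the limit in the rescaled system \eqref{eq:AtDInf1}-\eqref{eq:AtDInf3}, treating equation \eqref{eq:AtDInf1} as the main equation for $U_\eps$ and using \eqref{eq:AtDInf2}-\eqref{eq:AtDInf3} to identify the constant $\alpha$. First I would record the a priori bounds: from $w_\eps \geq 0$ we have $\int_\Gamma U_\eps \leq m$, and from the equation \eqref{eq:AtDInf1} the right-hand side is bounded in $L^\infty(\Gamma)$ (each term: $\eps a_1 v_\eps + \frac{\eps a_2 U_\eps}{\eps a_3 + U_\eps} v_\eps + c v_\eps$ is controlled once we know $v_\eps$ is bounded, and $\frac{a_4 U_\eps}{\eps+U_\eps} \in [0,a_4]$). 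So I first need an $L^\infty$ (or at least $L^p$) bound on $v_\eps$; this should come from \eqref{eq:AtDInf2}: rearranging, $a_5 v_\eps = \eps \Delta v_\eps - (\dots)v_\eps + \frac{a_4 U_\eps}{\eps+U_\eps} + a_6 w_\eps$, and since $w_\eps = \frac1\eps(m - \int_\Gamma(U_\eps+\eps v_\eps))$ is a nonnegative constant bounded by $m/\eps$ one has to be careful — but integrating \eqref{eq:AtDInf2} over $\Gamma$ kills the Laplacian and gives $\int_\Gamma(a_1 \eps + \tfrac{\eps a_2 U_\eps}{\eps a_3+U_\eps}+c)v_\eps + a_5\int_\Gamma v_\eps = \int_\Gamma \frac{a_4 U_\eps}{\eps+U_\eps} + a_6 |\Gamma| w_\eps \leq a_4|\Gamma| + a_6|\Gamma| w_\eps$, which together with $c\geq c_0$ bounds $\int_\Gamma v_\eps$ in terms of $w_\eps$; combined with the positivity of $w_\eps$ this should in fact force $w_\eps$ itself to stay bounded, and then elliptic regularity for \eqref{eq:AtDInf2} (with small parameter $\eps$ in front of $\Delta$) gives uniform $L^\infty$ bounds on $v_\eps$. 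This is the step I expect to require the most care — controlling the nonlocal/constant term $w_\eps$ and the singular perturbation simultaneously.

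Once the right-hand side of \eqref{eq:AtDInf1} is bounded in $L^\infty(\Gamma)$, elliptic $L^p$-regularity for the Laplace--Beltrami operator (as quoted in the introductory Remark, via \cite{GiTr01} and partition of unity) gives a uniform bound for $U_\eps$ in $W^{2,p}(\Gamma)$ for every finite $p$, hence in $C^{1,\beta}(\Gamma)$. Therefore, along a subsequence, $U_{\eps_j} \to \baru$ strongly in $C^1(\Gamma)$ and weakly in $W^{2,p}(\Gamma)$, so in particular in $L^1(\Gamma)$, and passing to the limit in \eqref{eq:AtDInf3} (using that $\eps_j w_{\eps_j}$ and $\eps_j\int_\Gamma v_{\eps_j}$ are $O(\eps_j)$ since $w_{\eps_j}, \int_\Gamma v_{\eps_j}$ are bounded) gives \eqref{massU}. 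For the nonlinear terms: $\frac{U_{\eps_j}}{\eps_j + U_{\eps_j}}$ is bounded in $[0,1]$, so along a further subsequence it converges weakly-$*$ in $L^\infty(\Gamma)$ to some $\xi$ with values in $[0,1]$; the inclusion $\xi u = u$ a.e.\ follows from the standard maximal-monotone-graph argument: on $\{\baru>0\}$, $C^1$-convergence gives $U_{\eps_j}>0$ locally uniformly, so $\frac{U_{\eps_j}}{\eps_j+U_{\eps_j}} \to 1$ pointwise there, whence $\xi = 1$ a.e.\ on $\{\baru>0\}$, which yields $\xi u = u$.

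For the remaining terms on the right of \eqref{eq:AtDInf1}: $\eps_j a_1 v_{\eps_j} \to 0$ and $\frac{\eps_j a_2 U_{\eps_j}}{\eps_j a_3 + U_{\eps_j}} v_{\eps_j} \to 0$ in $L^1(\Gamma)$ because $v_{\eps_j}$ is bounded and the prefactors tend to $0$ (the second prefactor is bounded by $a_2$ and tends to $0$ a.e.\ on $\{\baru>0\}$ and is multiplied by the bounded factor $\eps_j\to 0$ on the rest — or simply bound $\frac{\eps_j a_2 U_{\eps_j}}{\eps_j a_3 + U_{\eps_j}} \leq a_2$ and note it is multiplied by nothing forcing decay, so instead bound it by $\min(a_2, \frac{a_2 U_{\eps_j}}{a_3})$ and use dominated convergence). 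The term $c\, v_{\eps_j}$ is the one that survives: I claim $v_{\eps_j} \rightharpoonup \alpha(c+a_5)^{-1}\cdot(\text{something})$; more precisely, from \eqref{eq:AtDInf2} in the limit, $a_5 v + c v = a_4 \xi + a_6 w$ where $w = \lim w_{\eps_j}$ is a constant (the $\eps\Delta v$ and $\eps a_1 v$, $\frac{\eps a_2 U}{\dots}v$ terms vanish), so $v = \frac{a_4 \xi + a_6 w}{c + a_5}$. Substituting into $c v = \frac{c(a_4\xi + a_6 w)}{c+a_5}$ and using $g = \frac{c}{c+a_5}$, $1-g = \frac{a_5}{c+a_5}$, one computes the right-hand side of \eqref{eq:AtDInf1} in the limit to be $g(a_4\xi + a_6 w) - a_4 \xi = a_6 w\, g - a_4(1-g)\xi$, so $-\Delta \baru = a_6 w\, g - a_4(1-g)\xi$, and setting $\alpha := a_6 w$ gives \eqref{B4}. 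The convergence $v_{\eps_j} \to v$ needed here should be in $L^1(\Gamma)$ (or weak in $L^2$), obtained from the uniform $W^{2,p}$ or at least $H^1$ bound on $v_{\eps_j}$ that follows from multiplying \eqref{eq:AtDInf2} by $\eps_j$ and applying regularity, plus strong convergence of $U_{\eps_j}$; and $w_{\eps_j} \to w$ along the subsequence since $w_{\eps_j}$ is a bounded sequence of constants. I would finish by passing to the limit in the weak formulation of \eqref{eq:AtDInf1}, testing against smooth functions, to obtain \eqref{B4} almost everywhere.
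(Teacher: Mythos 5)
The most substantive issue is the way you try to establish the key bound on $w_\eps$ (and thence on $v_\eps$). You integrate only \eqref{eq:AtDInf2} over $\Gamma$, obtain an estimate of $\int_\Gamma v_\eps$ in terms of $w_\eps$, and then assert that ``combined with the positivity of $w_\eps$ this should in fact force $w_\eps$ itself to stay bounded.'' That implication does not follow from the single integrated identity: a larger $w_\eps$ simply allows a larger $\int_\Gamma v_\eps$ on the left, and nonnegativity of $w_\eps$ gives no upper bound. The mass relation \eqref{eq:AtDInf3} only yields $w_\eps \leq m/\eps$, which is useless as $\eps\to 0$. What closes the argument is integrating \eqref{eq:AtDInf1} over $\Gamma$: since $w_\eps$ does not appear there, and using $c \geq c_0$ together with $\frac{U_\eps}{\eps+U_\eps}\leq 1$, one gets
\begin{equation*}
  c_0\int_\Gamma v_\eps \;\leq\; \int_\Gamma\Big(\eps a_1 + \tfrac{\eps a_2 U_\eps}{\eps a_3 + U_\eps} + c\Big)v_\eps \;=\; \int_\Gamma \frac{a_4 U_\eps}{\eps+U_\eps} \;\leq\; a_4|\Gamma|,
\end{equation*}
a bound on $\int_\Gamma v_\eps$ independent of $w_\eps$. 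Subtracting this identity from the integrated \eqref{eq:AtDInf2} then gives $a_5\int_\Gamma v_\eps = a_6 |\Gamma|\, w_\eps$, hence $w_\eps\leq \frac{a_4 a_5}{c_0 a_6}$. Without this step your chain of estimates never starts.

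A secondary, less consequential point: the claimed uniform $H^1$ or $W^{2,p}$ bound on $v_\eps$ from ``multiplying \eqref{eq:AtDInf2} by $\eps_j$ and applying regularity'' is not correct. Dividing by $\eps$ the relation reads $\Delta v_\eps = \eps^{-1}(\ldots)$, so elliptic regularity degenerates as $\eps\to 0$; there is no uniform $H^1$ control. But this does not matter: weak-$*$ convergence of $v_{\eps_j}$ in $L^\infty(\Gamma)$ (or even in the space of measures, as the paper uses) already suffices to pass to the limit in the integral formulation of \eqref{eq:AtDInf2} and identify $v = \frac{a_4\xi + a_6 w}{c+a_5}$, from which \eqref{B4} follows with $\alpha = a_6 w = a_5\fint_\Gamma v$ exactly as you compute.

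Once the $w_\eps$ bound is fixed, the rest of your route is a valid alternative to the paper's and is worth noting: you aim for $L^\infty$ control of $v_\eps$ via a maximum principle (at an interior maximum of $v_\eps$, $\eps\Delta v_\eps\leq 0$ forces $c_0 v_\eps \leq a_4 + a_6 w_\eps$, giving the bound once $w_\eps$ is controlled), hence $W^{2,p}(\Gamma)$ bounds for $U_\eps$ directly, strong $C^1(\Gamma)$ convergence of $U_{\eps_j}$, and a pointwise argument for $\xi u = u$ on $\{u>0\}$. The paper instead works with only the $L^1$ bound on $v_\eps$, passes to the limit in the sense of measures, identifies a posteriori that the limit measure is absolutely continuous with bounded density, and proves $\xi u = u$ via $W^{1,q}(\Gamma)$ estimates for $q<2$ coming from $L^1$-theory. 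Your route obtains stronger convergence at the cost of one extra pointwise argument (the maximum principle for $v_\eps$, which requires invoking the $C^2$ regularity already established for fixed $\eps$); the paper's route is more economical in the a~priori information it uses.
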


\begin{proof}
Integrating (\ref{eq:AtDInf1}) and  (\ref{eq:AtDInf2}) over $\Gamma$ we obtain
\begin{align}
\int_{\Gamma}\left(  a_{1}\varepsilon+\frac{a_{2}\varepsilon U_\eps}%
{a_{3}\varepsilon+U_\eps}+c\right) v_\eps  &  = \int_{\Gamma}\frac{a_{4}U_\eps}{\varepsilon+U_\eps}\label{A1}\\
\int_{\Gamma}a_{5}v_\eps  &  = a_6 w_\eps |\Gamma| \label{A2}%
\end{align}
Then, using that $c\geq c_{0}>0,\ U_\eps\geq0,\ v_\eps\geq0$ and $\frac{U_\eps}{\varepsilon+U_\eps}\leq1$ in (\ref{A1}) it follows
\begin{equation}
\int_{\Gamma}v_\eps\leq\frac{a_{4}}{c_{0}}|\Gamma| \label{A3}%
\end{equation}
and  (\ref{A2}) implies that $w_\eps\leq\frac{a_{4}a_{5}}{c_{0}a_{6}}$.
Moreover, since $w_\eps\geq0$, we obtain from (\ref{eq:AtDInf3}) that
\begin{equation}
\int_{\Gamma}U_\eps\leq m \label{A5}%
\end{equation}
Using (\ref{A2}) we can rewrite (\ref{eq:AtDInf2}) as
\begin{equation}
0=\eps \Delta v_\eps-\left(  a_{1}\varepsilon+\frac{a_{2}\varepsilon U_\eps}{a_{3}%
\varepsilon+U_\eps}+c\right)  v_\eps+%
\frac{a_{4}U_\eps}{\varepsilon+U_\eps}-a_{5}v_\eps+a_{5}\fint
_{\Gamma}v_\eps\,, \qquad  x\in\Gamma\,.\label{A6}%
\end{equation}

Let $\varphi\in C^{2}(\Gamma)  $ an arbitrary test
function. Multiplying (\ref{A6}) by $\varphi$ and integrating over
$\Gamma$ we obtain, after integrating by parts
\begin{equation}
0=\varepsilon\int_{\Gamma}v_\eps\Delta\varphi-\int_{\Gamma}\left(
a_{1}\varepsilon+\frac{a_{2}\varepsilon U_\eps}{a_{3}\varepsilon+U_\eps}+c\right)
v_\eps\varphi+a_{4}\int_{\Gamma}\frac{U_\eps}{\varepsilon+U_\eps}\varphi-a_{5}%
\int_{\Gamma}\varphi v_\eps+a_{5}\left(  \fint_{\Gamma}v_\eps\right)
\left(  \int_{\Gamma}\varphi\right)  \label{A7}%
\end{equation}
Notice that $0\leq\frac{U_\eps}{\varepsilon+U_\eps}<1$ and  $0\leq\frac
{U_\eps}{a_{3}\varepsilon+U_\eps}\leq1.$ Due to (\ref{A3}) and (\ref{A5}) we obtain that
there exists a subsequence $\varepsilon_{j}\rightarrow0$ of the given sequence such that
\begin{equation}
\ v_{\eps_j}\rightharpoonup {v}\,,\qquad U_{\eps_j}%
\rightharpoonup\baru\,, \qquad \frac{U_{\eps_j}}{\varepsilon_{j}+U_{\eps_j}}%
\overset{*}{\rightharpoonup}\xi\,\qquad \mbox{ as }j\rightarrow\infty\label{B2}%
\end{equation}
where ${v},\baru\in\mathcal{M}_{+}(\Gamma)$. The convergence of the sequences $(v_{\eps_j})_j$,
$(U_{\eps_j})_j$ takes place in the weak topology of measures and the convergence of $\frac{U_{\eps_j}}{\varepsilon_{j}+U_{\eps_j}}$ takes place in the weak$\ast$ topology of $L^{\infty}(\Gamma)$.
Taking the
limit in \eqref{A7} we obtain
\begin{align*}
0&=-\int_{\Gamma}c{v}\varphi+a_{4}\int_{\Gamma}\xi
\varphi-a_{5}\int_{\Gamma}\varphi {v}+a_{5}\left(  \fint
_{\Gamma} {v}\right)  \left(  \int_{\Gamma}\varphi\right)\\
\int_{\Gamma}\left(  c+a_{5}\right)  {v}\varphi&=a_{4}%
\int_{\Gamma}\varphi\xi+a_{5}\fint_{\Gamma} {v}%
\int_{\Gamma}\varphi
\end{align*}
where $\varphi$ is an arbitrary test function in $C^{2}(\Gamma)$. In particular this implies that $(c+a_{5}){v}$
is an absolutely continuous measure with can be written in terms of a
bounded density, more precisely
\begin{equation}
{v}=\frac{a_{4}\xi}{c+a_{5}}+\frac{a_{5}}{c+a_{5}}\fint_{\Gamma
}{v} \label{A8}%
\end{equation}

Letting $\alpha=a_5\fint_\Gamma v$ we arrive at
\begin{equation}
  {v}=\frac{a_{4}\xi}{c+a_{5}}+\frac{\alpha}{c+a_{5}} \,.\label{B1}%
\end{equation}
We now consider the limit of (\ref{eq:AtDInf1}). Multiplying this equation by
$\varphi$ where $\varphi\in C^{2}(\Gamma)  $
and using (\ref{B2}) we obtain the limit equation
\begin{equation*}
	\Delta\baru+c{v}-a_{4}\xi=0\,,
\end{equation*}
which is satisfied
in the sense of distributions.
Using (\ref{B1}) we obtain
\begin{equation}
\Delta\baru+\alpha g-a_{4}(1-g)  \xi=0\,.
\label{eqB3}
\end{equation}
Due to the boundedness of $g$ and $\xi$ classical regularity theory implies that $\baru$ is
bounded in $W^{2,p}(\Gamma)$ for any $p<\infty.$ This
gives (\ref{B4}). On the other hand, taking the limit of (\ref{eq:AtDInf3}), using
(\ref{B2}) and the uniform boundedness of $w_{\eps}$  we obtain (\ref{massU}).

It remains to prove $\xi u = u$. Using (\ref{eq:AtDInf1}), (\ref{A3}) and applying classical regularity arguments \cite[Proposition 4.3]{Stam64},
(see also \cite{GiMa12}),
we obtain uniform estimates for $U_{\eps_j}$ in $W^{1,q}(\Gamma)$ for all $1<q<2$.
Then $U_{\eps_j}\rightarrow\baru$ in $L^{q}(\Gamma)$ and for any test function $\varphi\in C^0(\Gamma)$
\begin{equation*}
	\int_\Gamma \varphi (\xi u-u) = \lim_{j\to\infty} \int_\Gamma \varphi \Big(\frac{U_{\eps_j}}{\eps_j+U_{\eps_j}}-1\Big)U_{\eps_j} = \lim_{j\to\infty} \int_\Gamma -\eps_j\varphi \frac{U_{\eps_j}}{\eps_j+U_{\eps_j}} = 0.
\end{equation*}
This implies $\xi u = u$.
\end{proof}

We remark that by an integration of \eqref{eqB3} first over $\Gamma$ and then over $\{u>0\}$ (and using the $W^{2,p}(\Gamma)$-regularity of $u$) we deduce that
\begin{equation}
	\alpha=\frac{a_{4}\int_{\Gamma}(1-g)  \xi}{\int_{\Gamma}g} = \frac{a_{4}\int_{\{u>0\}}(1-g)}{\int_{\{u>0\}}g}\,. \label{defAlpha}%
\end{equation}
Moreover, by Stampacchia's lemma \cite[Proposition 3.23]{GiMa12} and the $W^{2,p}(\Gamma)$-regularity of $u$ one obtains $\Delta u=0$ almost everywhere in $\{u=0\}$, which yields the representation formula
\begin{equation}\label{eq:repr-xi}
	\xi =
	\begin{cases}
		1 \quad &\text{ in }\{u>0\},\\
		\frac{\alpha g}{a_4(1-g)} \quad &\text{ in }\{u=0\}
	\end{cases}.
\end{equation}
In particular, from the second equality in \eqref{defAlpha} and \eqref{eq:repr-xi} we deduce that $\xi,\alpha$ are determined by $u$.

\subsection{Critical mass for polarization.}\label{Ss.criticalmass}

Without loss of generality, rescaling $u$ and $\alpha$ accordingly,  we set in the following $a_4=1$.
Thus, we consider a solution $(u,\xi,\alpha)$ of the problem
\begin{align}
 &-\Delta u = -(1-g)\xi + \alpha g\quad \mbox{ on } \Gamma,\qquad
 u\geq 0, \, u\in W^{2,p}(\Gamma)\,\text{ for all }1\leq p<\infty, \label{eq:u1}\\
 &\xi\in [0,1],\qquad \xi=1 \quad\text{ on }\{u>0\},\label{eq:xi1}\\
 &\int_{\Gamma} u = m \,,\label{eq:u2}
\end{align}
Here $g$ is as in \eqref{defg} and we also recall  the compatibility condition \eqref{defAlpha} for $\alpha$, which by \eqref{eq:xi1} in particular yields 
\begin{equation}\label{eq:alpha-ast}
	\alpha \leq \alpha_{\ast}:=\frac{\int_{\Gamma}(1-g)  }{\int_{\Gamma}g}.
\end{equation}

We have a variational principle for the problem \eqref{eq:u1}, \eqref{eq:xi1}.
\begin{proposition}\label{prop:var-inequality}
Fix $0\leq \alpha \leq\alpha_{\ast}$ and consider the minimization problem
\begin{equation}
	u=\arg\min\left\{  \tfrac{1}{2}\int_{\Gamma}|\nabla v|^2+\int_{\Gamma}(1-g)v - \alpha gv\;  |\, v \in H^1(\Gamma)\,, \, v\geq 0 \right\}. \label{D1}
\end{equation}
Then the following properties hold.
\begin{enumerate}
\item	For any $0\leq \alpha \leq\alpha_{\ast}$ a minimizer of \eqref{D1} exists.
\item	Minimizers are unique for $\alpha<\alpha_{\ast}$ and unique up to additive constants for $\alpha=\alpha_{\ast}$.
\item	$u\in H^1(\Gamma)$ with $u\geq 0$ is a minimizer of \eqref{D1} if and only if $u$ is a solution of the variational inequality
\begin{equation}
	\int_\Gamma \nabla u\cdot\nabla (u-v) + \int_\Gamma \big((1-g)-\alpha g\big)(u-v) \leq 0 \quad\text{ for all } v\in H^1(\Gamma),\, v\geq 0. \label{D1a}
\end{equation}
\item $u\in H^1(\Gamma)$ with $u\geq 0$ is a minimizer in \eqref{D1} if and only if
\begin{equation}
	-\Delta u + (1-g) -\alpha g \geq 0\quad\text{ in }\Gamma,\qquad
	-\Delta u + (1-g) -\alpha g = 0 \quad\text{ in } \{u>0\}. \label{eq:D1b}
\end{equation}
\item	If $u\in W^{2,p}(\Gamma)$, $1\leq p<\infty$ with $u\geq 0$ solves \eqref{eq:u1}, \eqref{eq:xi1} then $u$ is a minimizer of \eqref{D1}.
\item	If $u\in W^{2,p}(\Gamma)$, $1\leq p<\infty$ is a minimizer of \eqref{D1} then there exists $\xi\in L^\infty(\Gamma)$ with $0\leq\xi\leq 1$ such that \eqref{eq:u1}, \eqref{eq:xi1} are satisfied.
\end{enumerate}
\end{proposition}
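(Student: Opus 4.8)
The plan is to prove the six assertions of Proposition~\ref{prop:var-inequality} by a standard, if somewhat lengthy, argument from the calculus of variations for obstacle-type problems, adapted to the closed manifold $\Gamma$. Throughout I write $F(v):=\tfrac12\int_\Gamma|\nabla v|^2+\int_\Gamma\big((1-g)-\alpha g\big)v$ for the functional in \eqref{D1} and $K:=\{v\in H^1(\Gamma):v\geq 0\}$, which is a nonempty closed convex cone in $H^1(\Gamma)$. The linear term has coefficient $\ell:=(1-g)-\alpha g$, and the sign condition \eqref{eq:alpha-ast}, i.e. $\alpha\leq\alpha_\ast$, is exactly $\int_\Gamma \ell\geq 0$ (with equality iff $\alpha=\alpha_\ast$); this is the structural fact that makes coercivity on $K$ work even though constants lie in $H^1(\Gamma)$.

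First, existence (item 1): I decompose $v=\overline v+\tilde v$ with $\tilde v$ of zero mean, so $\int_\Gamma|\nabla v|^2=\int_\Gamma|\nabla\tilde v|^2\geq c_P\|\tilde v\|_{H^1}^2$ by Poincar\'e, while the linear term equals $\overline v\int_\Gamma\ell+\int_\Gamma\ell\tilde v$. On $K$ one has $\overline v\geq 0$, and since $\int_\Gamma\ell\geq 0$ the term $\overline v\int_\Gamma\ell$ is nonnegative; the cross term $\int_\Gamma\ell\tilde v$ is controlled by $\varepsilon\|\tilde v\|_{H^1}^2+C_\varepsilon$. Hence $F$ is bounded below on $K$ and coercive in $\tilde v$; for a minimizing sequence the zero-mean parts are bounded in $H^1(\Gamma)$, and when $\alpha<\alpha_\ast$ the averages $\overline v_n$ are bounded too because $\overline v_n\int_\Gamma\ell\leq F(v_n)+C$. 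By weak lower semicontinuity of $F$ (the Dirichlet term is convex and continuous, the linear term weakly continuous) and weak closedness of $K$, a minimizer exists. When $\alpha=\alpha_\ast$ the averages need not be bounded, but one may first minimize over zero-mean functions and then check that adding a nonnegative constant does not decrease $F$ — here the constraint $v\geq 0$ matters — giving a minimizer; this also gives the constant-shift nonuniqueness in item 2. Uniqueness for $\alpha<\alpha_\ast$ follows from strict convexity of $F$ on $K$: $F$ is strictly convex modulo constants, and strict monotonicity in the average rules out the shift, so the minimizer is unique.

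For items 3 and 4 I carry out the classical equivalence between minimization over a convex set, the variational inequality, and the complementarity system. Item 3: if $u$ minimizes, then for $v\in K$ and $t\in[0,1]$, $u+t(v-u)\in K$ and $\frac{d}{dt}\big|_{t=0^+}F(u+t(v-u))\geq 0$ gives $\int_\Gamma\nabla u\cdot\nabla(v-u)+\int_\Gamma\ell(v-u)\geq 0$, which is \eqref{D1a}; conversely \eqref{D1a} plus convexity of $F$ gives $F(v)\geq F(u)$. Item 4: from \eqref{D1a}, testing with $v=u+\varphi$, $\varphi\in C^\infty(\Gamma)$, $\varphi\geq 0$, gives $-\Delta u+\ell\geq 0$ in the distributional sense; testing with $v=u\pm\varphi$ supported in the open set $\{u>0\}$ (where the constraint is inactive for small $\varphi$) gives $-\Delta u+\ell=0$ there; the converse direction multiplies the two relations in \eqref{eq:D1b} appropriately, using $u\geq 0$ and $(u-v)\leq u$ on $\{u=0\}$ to recover \eqref{D1a}.

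Items 5 and 6 connect this to the $(u,\xi,\alpha)$-formulation. For item 5, given a $W^{2,p}(\Gamma)$ solution of \eqref{eq:u1}--\eqref{eq:xi1}, set $\xi=1$ on $\{u>0\}$; then $-\Delta u+\ell=(1-\alpha g\cdot 0)\dots$ — more precisely $-\Delta u=-(1-g)\xi+\alpha g$ with $\xi\in[0,1]$ gives $-\Delta u+\ell=(1-g)(1-\xi)\geq 0$ everywhere, and $=0$ on $\{u>0\}$ since $\xi=1$ there, so \eqref{eq:D1b} holds and item~4 yields that $u$ is a minimizer. For item 6, given a $W^{2,p}$ minimizer, item~4 gives $(1-g)-\alpha g-\Delta u=:h\geq 0$ with $h=0$ on $\{u>0\}$; on $\{u=0\}$, Stampacchia's lemma gives $\Delta u=0$ a.e., so there $h=(1-g)-\alpha g$, and one defines $\xi:=\frac{\alpha g-h+ (\text{something})}{1-g}$ — concretely $\xi:=1$ on $\{u>0\}$ and $\xi:=\frac{\alpha g}{1-g}$ on $\{u=0\}$, exactly as in \eqref{eq:repr-xi}; then $-\Delta u=-(1-g)\xi+\alpha g$ holds a.e., and one checks $0\leq\xi\leq 1$ on $\{u=0\}$ using $0\leq\alpha\leq\alpha_\ast$ together with the pointwise bound $g<1$ — here the only subtlety is that $\alpha\leq\alpha_\ast$ is an \emph{averaged} inequality, so one must instead invoke $h\geq 0$ directly: $h=(1-g)-\alpha g\geq 0$ on $\{u=0\}$ forces $\frac{\alpha g}{1-g}\leq 1$ there pointwise. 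That pointwise consequence of the variational inequality on the coincidence set is the one genuinely delicate point; everything else is routine obstacle-problem bookkeeping on a compact manifold, using the Laplace--Beltrami regularity theory already quoted in the paper.
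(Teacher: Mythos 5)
Your overall strategy mirrors the paper's proof closely — the mean/zero-mean decomposition for coercivity, the standard variational-inequality $\Leftrightarrow$ minimization $\Leftrightarrow$ complementarity equivalences, Stampacchia's lemma on the coincidence set, and (crucially, and you get this right) the observation that the pointwise bound $\xi\leq 1$ on $\{u=0\}$ in item~6 must come from the \emph{pointwise} inequality $-\Delta u+(1-g)-\alpha g\geq 0$ rather than from the averaged condition $\alpha\leq\alpha_\ast$.

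There is, however, a genuine gap in your treatment of existence for $\alpha=\alpha_\ast$. You write that one can ``first minimize over zero-mean functions and then check that adding a nonnegative constant does not decrease $F$.'' If the zero-mean minimization is taken inside $K$, the only admissible function is $v\equiv 0$, so this cannot be meant. If instead you drop the constraint $v\geq 0$ and minimize $F$ over $\{v\in H^1(\Gamma):\int_\Gamma v=0\}$ (which is what works), you obtain some $v_\ast$ with $I(v_\ast)\leq\inf_K F$ (since $I(v)=I(v-\overline v)$), and because $I$ is invariant under constant shifts when $\alpha=\alpha_\ast$ you would like to set $u:=v_\ast-\min_\Gamma v_\ast$. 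But this requires $\min_\Gamma v_\ast>-\infty$, and an $H^1$ function on a two-dimensional surface need not be bounded. The missing step — and the one the paper supplies — is that $v_\ast$ solves the Euler--Lagrange equation $-\Delta v_\ast=-(1-g)+\alpha_\ast g$ (the Lagrange multiplier for the mean constraint vanishes upon integration), hence by elliptic regularity $v_\ast$ is bounded (indeed continuous), and only then is the upward shift legitimate. Your phrase ``adding a nonnegative constant does not decrease $F$'' is true but beside the point: adding any constant leaves $F$ unchanged; what must be established is that some admissible shift makes the candidate nonnegative, and that is exactly where the regularity input is needed. Apart from this, the remaining items are fine and match the paper's reasoning, modulo the small typo in item~5 which you correct in the next clause.
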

\begin{proof}
\begin{enumerate}
\item
We use the direct method of the calculus of variations. For any $v \in H^1(\Gamma)$, $v\geq 0$ we have, letting $\bar v = \frac{1}{|\Gamma|}\int_\Gamma v$,
\begin{align}
	I(v) &:= \tfrac{1}{2}\int_{\Gamma}|\nabla v|^2+\int_{\Gamma}(1-g)v - \alpha gv \nonumber\\
	&= \tfrac{1}{2}\int_\Gamma |\nabla (v-\bar v)|^2 + \int_\Gamma \big((1-g)-\alpha g\big)(v-\bar v) + \bar v\int_\Gamma \big((1-g)-\alpha g\big)\nonumber\\
	&\geq C\|v-\bar v\|_{H^1(\Gamma)}^2 -C(\alpha,g,\Gamma)\|v-\bar v\|_{L^2(\Gamma)} + \bar v (\alpha_{\ast}-\alpha)\int_\Gamma g \label{coercivity}\\
	& \geq C\|v-\bar v\|_{H^1(\Gamma)}^2 - C(\alpha,g,\Gamma) + \bar v (\alpha_{\ast}-\alpha)\int_\Gamma g. \nonumber
\end{align}
If $0\leq \alpha<\alpha_{\ast}$ this shows the coercivity of $I$ and then also the existence of a minimizer. (We remark that for $\alpha > \alpha_{\ast}$
we can take a sequence $v_n=n$ such that the functional converges to $-\infty$.)

In the case $\alpha=\alpha_{\ast}$ we have $I(v)=I(v+\gamma)$ for any $\gamma\in\R$. We then can minimize $I$
in the class $\{v\in H^1(\Gamma)\,:\, \int_\Gamma v =0\}$ and obtain a minimizer $v_\ast$ for this constrained minimization problem, which in addition satisfies,
since $I(v)=I(v-\bar v)$,
\begin{equation*}
	I(v_\ast) \leq \inf\left\{  I(v)\;  |\, v \in H^1(\Gamma)\,, \, v\geq 0 \right\}.
\end{equation*}
Moreover, $v_\ast$ solves the Euler--Lagrange equation $-\Delta v_\ast = -(1-g)+\alpha_\ast g+\lambda$, for some Lagrange multiplier $\lambda$ (we even find $\lambda=0$ by integration over $\Gamma$). By elliptic regularity, $v_\ast$ is bounded. But then $u:= v_\ast -\min_\Gamma v_\ast$ is nonnegative and satisfies \eqref{D1}.
\item
Consider two different minimizer $u_1,u_2$ of \eqref{D1}. We deduce that for $v=\frac{1}{2}(u_1+u_2)$
\begin{align*}
	0\leq I(v)-\frac{I(u_1)+I(u_2)}{2} = -\tfrac 1 8 \int_\Gamma |\nabla u_1-\nabla u_2|^2,
\end{align*}
hence $u_1-u_2$ is constant. In the case $\alpha<\alpha_{\ast}$ we further obtain from $I(u_1)=I(u_2)$, using the second line in \eqref{coercivity},  that $u_1=u_2$.
\item
\eqref{D1a} is the weak form of the Euler--Lagrange inequality and holds for any solution of \eqref{D1}. Vice versa, for a nonnegative solution $u\in H^1(\Gamma)$ of \eqref{D1a} and any other $v\in H^1(\Gamma)$, $v\geq 0$ we deduce from \eqref{D1a} that
\begin{equation*}
	I(u)-I(v) \leq \int_\Gamma \frac{1}{2}|\nabla u|^2 - \frac{1}{2}|\nabla v|^2 - \nabla u\cdot\nabla (u-v) = -\frac{1}{2}\int_\Gamma |\nabla (u-v)|^2\leq 0.
\end{equation*}
Therefore, $u$ is a minimizer in \eqref{D1}.
\item
\eqref{D1a} implies that $u$ is a weak solution of \eqref{eq:D1b}.
Vice versa, if $u\in H^1(\Gamma)$, $u\geq 0$ satisfies \eqref{eq:D1b} and $v\in H^1(\Gamma)$, $v\geq 0$ then
\begin{align*}
	&\int_\Gamma \nabla u\cdot\nabla (u-v) + \int_\Gamma \big((1-g)-\alpha g\big)(u-v) \\
	&\qquad\qquad = \int_\Gamma \big(\nabla u\cdot\nabla (u-v)_+  +\big((1-g)-\alpha g\big)(u-v)_+\big)\\
	&\qquad\qquad\qquad - \int_\Gamma \big(\nabla u\cdot\nabla (u-v)_- + \big((1-g)-\alpha g\big)(u-v)_-\big)
\end{align*}
Since $u>0$ in $\{u>v\}$ the first integral on the right hand side  is zero by the equation in \eqref{eq:D1b}. The inequality in \eqref{eq:D1b} gives that the second integral is nonpositive. Hence, \eqref{D1a} holds.
\item \eqref{eq:u1}, \eqref{eq:xi1} immediately yield \eqref{eq:D1b}.
\item Consider a solution $u\in W^{2,p}(\Gamma)$, $u\geq 0$ of \eqref{D1}. We then can choose $\xi=1$ in $\{u>0\}$ and $\xi = \frac{\alpha g}{1-g}$ in $\{u=0\}$. Using that by Stampacchia's lemma $\Delta u=0$ almost everywhere in $\{u=0\}$ we deduce first from the inequality in \eqref{eq:D1b} that $\xi\in [0,1]$, and second that \eqref{eq:u1} holds.
\end{enumerate}
\end{proof}
We have not yet shown that the assumption $u\in W^{2,p}(\Gamma)$, $1\leq p<\infty$ for a minimizer $u$ of \eqref{D1} that appears in item 6
of the previous proposition is always satisfied. This however will follow from Proposition \ref{prop:var-inequality} together with  Theorem \ref{CritMass} below.

\begin{remark}\label{rem:maxmon}
  The variational inequality \eqref{D1a} is equivalent to the following variational inequality without constraints: $u\in H^1(\Gamma)$ is a solution of \eqref{D1a} with $u\geq 0$ if and only if
  \begin{equation}
    \int_\Gamma \nabla u\cdot \nabla (u-v) + (1-g)u_+ -(1-g)v_+ \leq \int_\Gamma \alpha g (u-v) \quad\text{ for all }v\in H^1(\Gamma). \label{eq:var-ineq2}
  \end{equation}
  Note in particular that the latter property implies $u\geq 0$ by choosing $v=u_+$. In fact, this gives $\int_{\{u<0\}} |\nabla u|^2\leq 0$ which implies
  $0=\int_{\{u<0\}} |\nabla u|^2 \leq -\int_{\gamma} \alpha g  u_{-} \leq 0$ and hence $u_-=0$.

  Introducing the functional $J\colon H^1(\Gamma)\to\R$,
  \begin{equation*}
    J(v) = \int_\Gamma (1-g)v_+
  \end{equation*}
  \eqref{eq:var-ineq2} is equivalent to
  \begin{equation}
    -\Delta u + \partial J(u) \ni \alpha g. \label{eq:maxmon}
  \end{equation}
  We further obtain that $\partial J(u)=(1-g)\partial p(u)$, for $p:\R\to\R$, $p(r)=r_+$. In particular, for $u,\xi$ as in \eqref{eq:u1}, \eqref{eq:xi1} we find $(1-g)\xi\in\partial J(u)$.
  Hence \eqref{eq:u1}, \eqref{eq:xi1} is equivalent to the partial differential inclusion \eqref{eq:maxmon} for a maximal monotone graph.
\end{remark}
We will say that a solution to \eqref{eq:u1}, \eqref{eq:xi1}
exhibits polarization if the measure of both of the two regions $\{  u>0\}
$ and $\{u=0\}  $ is positive. It turns out that it is possible
to give a necessary and sufficient condition for polarization in this model.
To this end we  define an auxiliary function
$u_{\ast}.$

\begin{definition}
\label{defWstar} Let $g$ be
defined as in (\ref{defg}) and $\alpha_\ast$ as in \eqref{eq:alpha-ast}. We define
$u_{\ast}$ as the unique solution of the equation
\begin{equation}
  -\Delta u_{\ast}=-(1-g)  +\alpha_{\ast}g\,,\qquad  x\in\Gamma\,\label{D2}%
\end{equation}
such that $\min_{\Gamma}u_{\ast}=0.$
\end{definition}
Equation (\ref{D2}) has infinitely many solutions due to the
definition of $\alpha_{\ast},$ which implies $\int_{\Gamma}\left[
-(1-g)  +\alpha_{\ast}g\right]  =0.$ The solutions
differ by a constant and they are continuous in $\Gamma.$ Then, if $u$
is any solution of (\ref{D2}) we obtain that $u_{\ast}=u-\min_{\Gamma
}u$ and $u_{\ast}$ is unique.
We define
\begin{equation}
  m_{\ast}= m_{\ast}[g]  =\int_{\Gamma}u_{\ast} \label{mCrit}%
\end{equation}

In the following we denote  $g_{\max}=\max_{\Gamma}g$
and $\alcrit= \frac{(1-g_{\max})}{g_{\max}}$. Notice that $\alcrit\leq \alpha_{\ast}$ and strict inequality holds if $g$ is not constant.
In  case  that $g$ is constant no polarization occurs. More precisely we have the following equivalence.
\begin{proposition}\label{pro:constant_g}
Consider a solution $(u,\xi,\alpha)$ of \eqref{eq:u1}-\eqref{eq:u2} with $m>0$. Then $u$ is constant if and only if $g$ is constant. In this case we have $u_*=0$ and $\alpha=\alcrit=\alpha_{\ast}$.
\end{proposition}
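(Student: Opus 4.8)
The plan is to prove the stated equivalence in both directions and then to read off $u_{\ast}\equiv 0$ and $\alpha=\alcrit=\alpha_{\ast}$ as immediate consequences. The whole argument runs on the compatibility relation \eqref{defAlpha} together with the variational characterization of Proposition \ref{prop:var-inequality}, and the key point where the hypothesis $m>0$ enters is that it forces $|\{u>0\}|>0$.

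Assume first that $g\equiv\bar g$ is constant. Since $\int_\Gamma u=m>0$, the set $\{u>0\}$ has positive measure, so the second expression in \eqref{defAlpha} (obtained by integrating \eqref{eq:u1} over $\{u>0\}$ and using Stampacchia's lemma to discard $\int_{\{u>0\}}\Delta u$) gives $\alpha=\int_{\{u>0\}}(1-\bar g)/\int_{\{u>0\}}\bar g=(1-\bar g)/\bar g=\alpha_{\ast}$. By Proposition \ref{prop:var-inequality}(5) (its hypothesis $u\in W^{2,p}$ is part of \eqref{eq:u1}), $u$ then minimizes the functional in \eqref{D1} for this value $\alpha=\alpha_{\ast}$. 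But for constant $g$ and $\alpha=\alpha_{\ast}$ the zeroth-order coefficient $(1-g)-\alpha_{\ast}g$ vanishes identically, so the functional reduces to $\tfrac12\int_\Gamma|\nabla v|^2$, whose infimum over nonnegative $v\in H^1(\Gamma)$ is $0$; hence $\int_\Gamma|\nabla u|^2=0$, and since $\Gamma$ is connected $u$ is constant. (Alternatively, once $\alpha=\alpha_{\ast}$ is known one has $-\Delta u=0$ on the open set $\{u>0\}$, and the maximum principle applied to each connected component together with connectedness of $\Gamma$ forces $\{u>0\}=\Gamma$, whence $\xi\equiv 1$ and $\Delta u\equiv 0$.)

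Conversely, assume $u$ is constant. Then $\int_\Gamma u=m>0$ forces $u\equiv m/|\Gamma|>0$, so $\{u>0\}=\Gamma$ and $\xi\equiv 1$ by \eqref{eq:xi1}. Substituting $\xi\equiv 1$ and $\Delta u=0$ into \eqref{eq:u1} yields $0=-(1-g)+\alpha g$ almost everywhere, i.e.\ $g\equiv(1+\alpha)^{-1}$; by continuity of $g$ this holds on all of $\Gamma$, so $g$ is constant (and $\alpha>0$ since $g<1$).

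Finally, suppose $g$ --- equivalently $u$ --- is constant. The computation above already gave $\alpha=\alpha_{\ast}$, and $g_{\max}=g$ yields $\alcrit=(1-g)/g=\int_\Gamma(1-g)/\int_\Gamma g=\alpha_{\ast}$, so $\alpha=\alcrit=\alpha_{\ast}$. Moreover, by Definition \ref{defWstar} the function $u_{\ast}$ solves $-\Delta u_{\ast}=-(1-g)+\alpha_{\ast}g=0$ with $\min_\Gamma u_{\ast}=0$, hence $u_{\ast}$ is constant and therefore $u_{\ast}\equiv 0$. I do not expect a genuine obstacle in this proof; the only step demanding a little care is the identity $\alpha=\alpha_{\ast}$ for constant $g$, which is precisely where $m>0$ is used.
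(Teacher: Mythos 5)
Your proof is correct in both directions, and the key step is the same as the paper's: the second expression in \eqref{defAlpha} together with $|\{u>0\}|>0$ (from $m>0$) forces $\alpha=\alpha_\ast$ when $g$ is constant. After that the paper and you diverge slightly. The paper then uses the \emph{first} equality in \eqref{defAlpha} to obtain $\int_\Gamma\xi=|\Gamma|$ and hence $\xi\equiv 1$, at which point \eqref{eq:u1} collapses to $\Delta u=0$ and $u$ is constant on the closed connected surface $\Gamma$ — a purely pointwise manipulation of \eqref{eq:u1}, \eqref{defAlpha}. You instead invoke Proposition~\ref{prop:var-inequality}(5) to recognize $u$ as a minimizer of $I_\alpha$ for $\alpha=\alpha_\ast$, observe that the zeroth-order term vanishes so $I_{\alpha_\ast}(v)=\tfrac12\int_\Gamma|\nabla v|^2$, and deduce $\nabla u\equiv 0$. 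This is a perfectly valid alternative that trades the direct algebraic argument for the variational characterization; it is a touch less self-contained (it relies on a nontrivial auxiliary proposition) but equally rigorous. Your parenthetical maximum-principle version is also sound. The converse direction ($u$ constant $\Rightarrow$ $g$ constant via $g=(1+\alpha)^{-1}$) and the final identifications $u_*\equiv 0$, $\alpha=\alcrit=\alpha_\ast$ match the paper's reasoning.
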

\begin{proof}
Let $u\geq 0$ be constant. Since $m>0$ we have $u>0$ and $\xi=1$. Then \eqref{eq:u1} implies that $g$ is constant. This yields $u_*=0$ and $\alpha=\alcrit=\alpha_{\ast}$.

Vice versa assume that $g$ is constant. By $m>0$ the set $\{u>0\}$ has positive measure and \eqref{defAlpha} yields that $\alpha=\frac{1-g}{g}$ and $\xi=1$. But then \eqref{eq:u1} implies $\Delta u=0$ and $u$ is constant.
\end{proof}
 Our main results concerning the onset of polarized states is contained in the following theorem, see also the remark below.
\begin{theorem}
\label{CritMass}
Let $g$ be as in (\ref{defg}) and $m_{\ast}$ as in (\ref{mCrit}).
\begin{enumerate}[a)]
 \item
For any $m>0$, there exists a unique solution $(u,\xi,\alpha)$ to \eqref{eq:u1}-\eqref{eq:u2}.
\item
For any $\alpha\in [0,\alpha_*]$ there exists a solution $(u,\xi)$ to \eqref{eq:u1}, \eqref{eq:xi1}.\\
 For $\alpha<\alpha_*$ this solution is unique. For $\alpha=\alpha_\ast$ we have $\xi=1$ and $u$ is unique up to an additive constant.\\
 The map $\alpha\mapsto (u,\xi)$ is monotone increasing.\\
 For $0\leq\alpha < \alcrit$ we have $u=0$. If $g$ is not constant this also holds for $\alpha=\alcrit$.
\item If $m>m_{\ast} $ the solution $(u,\xi,\alpha)$ of \eqref{eq:u1}-\eqref{eq:u2} satisfies $u>0$ and $\xi=1$ in $\Gamma$, $\alpha=\alpha_{\ast}$
and $u=u_{\ast}+m-m_{\ast}$.
\item If $m<m_{\ast} $  the solution $(u,\xi,\alpha)$ of \eqref{eq:u1}-\eqref{eq:u2} satisfies $\left\vert \{  u=0\}  \right\vert >0$
and $\alpha<\alpha_{\ast}$.
\end{enumerate}
\end{theorem}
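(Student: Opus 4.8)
The plan is to build everything on the variational characterization in Proposition \ref{prop:var-inequality} and on the comparison/monotonicity structure of the obstacle problem, reducing the mass-constrained problem \eqref{eq:u1}--\eqref{eq:u2} to the unconstrained family $\{(u_\alpha,\xi_\alpha)\}_{\alpha\in[0,\alpha_\ast]}$. First I would establish part b). For fixed $\alpha\in[0,\alpha_\ast]$, let $u_\alpha$ be the minimizer of \eqref{D1}, which exists by item 1 of Proposition \ref{prop:var-inequality} and is unique for $\alpha<\alpha_\ast$ (item 2), and set $\xi_\alpha$ as in item 6; since the right-hand side $-(1-g)+\alpha g$ lies in $L^\infty(\Gamma)$, elliptic regularity upgrades $u_\alpha$ to $W^{2,p}(\Gamma)$ for all $p<\infty$, which also retroactively justifies the $W^{2,p}$-hypothesis appearing in item 6. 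Monotonicity of $\alpha\mapsto u_\alpha$ follows from a standard comparison argument for variational inequalities: if $\alpha_1\le\alpha_2$ then testing the inequality \eqref{D1a} for $u_{\alpha_1}$ with $v=\min(u_{\alpha_1},u_{\alpha_2})$ and the one for $u_{\alpha_2}$ with $v=\max(u_{\alpha_1},u_{\alpha_2})$, adding, and using that the linear term is nonincreasing in $\alpha$ forces $\int_{\{u_{\alpha_1}>u_{\alpha_2}\}}|\nabla(u_{\alpha_1}-u_{\alpha_2})|^2\le 0$, whence $u_{\alpha_1}\le u_{\alpha_2}$ (up to the harmless additive-constant ambiguity at $\alpha_\ast$). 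For the threshold behavior: if $\alpha\le\alcrit=(1-g_{\max})/g_{\max}$ then $-(1-g)+\alpha g\le -(1-g_{\max})+\alcrit g_{\max}=0$ pointwise only where $g=g_{\max}$; more carefully, $\alpha g-(1-g)=g(\alpha-(1-g)/g)$ and $(1-g)/g\ge(1-g_{\max})/g_{\max}=\alcrit\ge\alpha$, so the source is $\le 0$ everywhere, hence $v\equiv 0$ is a subsolution and the obstacle is active everywhere: $u_\alpha\equiv 0$. If $g$ is nonconstant the inequality $(1-g)/g>\alcrit$ is strict on a set of positive measure, so the same conclusion holds at $\alpha=\alcrit$. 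For $\alpha=\alpha_\ast$, \eqref{defAlpha} (the compatibility relation) combined with $\xi\le 1$ and $\int_\Gamma(\alpha_\ast g-(1-g))=0$ forces $\xi=1$ a.e. and $u_{\alpha_\ast}$ solves \eqref{D2} up to a constant, i.e. $u_{\alpha_\ast}=u_\ast+\text{const}$; this is item 2's uniqueness-up-to-constant statement.

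Next I would relate the mass to $\alpha$. Define $\mathcal{M}(\alpha):=\int_\Gamma u_\alpha$ for $\alpha\in[0,\alpha_\ast)$, which by the monotonicity just proved is nondecreasing, and by the continuous dependence of minimizers of \eqref{D1} on $\alpha$ (again via uniqueness and coercivity for $\alpha<\alpha_\ast$) is continuous. It satisfies $\mathcal{M}(\alpha)=0$ for $\alpha\le\alcrit$. I claim $\mathcal{M}(\alpha)\to m_\ast$ as $\alpha\uparrow\alpha_\ast$: indeed $u_\alpha\nearrow$ and is bounded in $H^1$ (by the coercivity estimate \eqref{coercivity}, noting the boundary term $\bar v(\alpha_\ast-\alpha)\int_\Gamma g$ has a coefficient that, while vanishing, still gives an a priori $L^1$ bound once one runs the argument against the minimality $I(u_\alpha)\le I(0)=0$), so $u_\alpha$ converges strongly in $L^2$ to the minimal nonnegative solution of \eqref{D2}, which is exactly $u_\ast$; hence $\mathcal{M}(\alpha)\to\int_\Gamma u_\ast=m_\ast$. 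Combined with strict monotonicity of $\mathcal{M}$ on $(\alcrit,\alpha_\ast)$—which one gets because on that interval the positivity set $\{u_\alpha>0\}$ has positive measure and a strict comparison on that set yields strict ordering of the integrals—this shows $\mathcal{M}$ is a continuous increasing bijection from $[0,\alpha_\ast)$ onto $[0,m_\ast)$.

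With this in hand, parts a), c), d) are short. For $m\in(0,m_\ast)$ set $\alpha=\mathcal{M}^{-1}(m)\in[0,\alpha_\ast)$ and $(u,\xi)=(u_\alpha,\xi_\alpha)$; this solves \eqref{eq:u1}--\eqref{eq:u2}, and since $\alpha<\alpha_\ast$ it must exhibit $|\{u=0\}|>0$ (otherwise $\xi=1$ everywhere and \eqref{defAlpha} forces $\alpha=\alpha_\ast$), giving d). For $m\ge m_\ast$, take $\alpha=\alpha_\ast$, $\xi=1$, and the unique nonnegative shift of $u_\ast$ with total mass $m$, namely $u=u_\ast+(m-m_\ast)/|\Gamma|\cdot|\Gamma|$—more precisely $u=u_\ast+(m-m_\ast)$ only if $|\Gamma|=1$; in general $u = u_\ast + (m-m_\ast)/|\Gamma|$, and for $m>m_\ast$ this $u$ is strictly positive (since $\min_\Gamma u_\ast=0$ and the added constant is positive) so \eqref{eq:xi1} holds with $\xi=1$, and \eqref{D2} is exactly \eqref{eq:u1} with $\alpha=\alpha_\ast$; this is c). Uniqueness in a): for $m\ge m_\ast$, any solution has $\xi\le 1$ and, integrating \eqref{eq:u1} over $\{u>0\}$ as in \eqref{defAlpha}, one gets $\alpha\le\alpha_\ast$ with equality iff $\xi=1$ on $\{u>0\}$; if $\alpha<\alpha_\ast$ then $u=u_\alpha$ has mass $\mathcal{M}(\alpha)<m_\ast\le m$, a contradiction with \eqref{eq:u2}, so $\alpha=\alpha_\ast$, $\xi=1$, $u$ solves \eqref{D2}, and the mass constraint pins down the constant. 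For $m<m_\ast$, if $\alpha<\alpha_\ast$ the solution is $u_\alpha$ and the mass constraint forces $\alpha=\mathcal{M}^{-1}(m)$ uniquely; the case $\alpha=\alpha_\ast$ is excluded since then $u=u_\ast+\text{const}$ has mass $\ge m_\ast>m$. Finally item 5/6 of Proposition \ref{prop:var-inequality} shows any solution of \eqref{eq:u1},\eqref{eq:xi1} is a minimizer, closing the loop.

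\medskip

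\emph{Main obstacle.} The delicate point is the behavior as $\alpha\uparrow\alpha_\ast$: the coercivity estimate \eqref{coercivity} degenerates there (the coefficient $\alpha_\ast-\alpha$ of the only term controlling $\bar v$ vanishes), so one must argue the $L^1$/$L^2$ bound on $u_\alpha$ by a separate route—using the minimality $I(u_\alpha)\le I(0)=0$ to bound $\int_\Gamma(1-g)u_\alpha-\alpha\int_\Gamma g u_\alpha$ together with $\int_\Gamma u_\alpha$ appearing with a \emph{positive} net coefficient $(\alpha_\ast-\alpha)\int_\Gamma g$—and then identify the limit as the minimal solution $u_\ast$ of \eqref{D2}, ruling out escape of mass to $+\infty$. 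Verifying that the limit is precisely the \emph{minimal} nonnegative solution (hence $\min_\Gamma =0$, giving $\int_\Gamma(\cdot)=m_\ast$) rather than some larger shift is what makes $m_\ast$ the sharp threshold and is the crux of the argument; it follows because each $u_\alpha$ vanishes somewhere (its contact set is nonempty for $\alpha<\alpha_\ast$, else $\alpha=\alpha_\ast$ by \eqref{defAlpha}), and this property passes to the limit.
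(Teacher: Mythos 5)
Your proposal is correct in substance and reaches all the conclusions of the theorem, but it takes a genuinely different route from the paper at the crux. The paper does \emph{not} prove directly that the mass map $\alpha\mapsto\int_\Gamma u_\alpha$ is a continuous increasing bijection of $[\alcrit,\alpha_\ast)$ onto $[0,m_\ast)$. Instead it leverages the external existence result (Theorems~\ref{thm:stat-states-Dpositive}, \ref{thm:stat-states-D=infty}, \ref{Limeps}): for every $m>0$ there is a solution of \eqref{eq:u1}--\eqref{eq:u2} with $u\in W^{2,p}$, and that solution is also a minimizer of \eqref{D1}. Combined with uniqueness of minimizers for fixed $\alpha$ and the strict monotonicity of $\alpha\mapsto m$, this automatically gives surjectivity of the mass map and the $W^{2,p}$-regularity of every minimizer, with no compactness analysis of the degenerate limit $\alpha\uparrow\alpha_\ast$ required. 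You instead construct $\alpha\mapsto\mathcal{M}(\alpha)$ intrinsically, proving $\mathcal{M}(\alpha)\to m_\ast$ via monotone convergence together with the observation that each $u_\alpha$ vanishes somewhere (so the $H^1$-bound doesn't escape even as the coercivity constant $\alpha_\ast-\alpha$ degenerates), and passing that zero set to the limit. This is correct and more self-contained — it does not rely on the bulk–surface PDE machinery of Sections~2--3 — but it makes you responsible for two extra ingredients that the paper's route circumvents: (i) $W^{2,p}$-regularity of minimizers of the obstacle problem, which is not ``elliptic regularity'' in the naive sense (the natural route is a Lewy--Stampacchia inequality giving $\Delta u_\alpha\in L^\infty$, then classical $L^p$-theory); and (ii) the uniform $C^{1,\gamma}$ equicontinuity needed to transfer the vanishing point to the limit $u_\infty$. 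Both are standard but should be cited precisely (e.g.~\cite{KiSt00}) rather than asserted. Finally, your observation that the formula $u=u_\ast+m-m_\ast$ needs the factor $1/|\Gamma|$ is correct; the paper's own statement of Theorem~\ref{thm:CritMass-D} for the finite-$D$ case includes the $\frac{1}{|\Gamma|}$ and the $D=\infty$ statement appears to have dropped it.
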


\begin{proof}
 \begin{enumerate}
 \item
Solvability of  \eqref{eq:u1}-\eqref{eq:u2} for given $m>0$ follows from Theorem \ref{thm:stat-states-Dpositive}, Theorem \ref{thm:stat-states-D=infty} and Theorem \ref{Limeps}.

\item {\it Solvability of the variational inequality and monotonicity.}
The variational inequality \eqref{D1a} can be
solved for each $\alpha\leq\alpha_{\ast}$, see Proposition \ref{prop:var-inequality}, and yields a unique solution for $\alpha<\alpha_\ast$ and a solution that is unique up to additive constants if $\alpha=\alpha_\ast$.

Suppose that $\alpha_{1}< \alpha_{2}$ and let $(u_1,\alpha_1)$, $(u_2,\alpha_2)$ be
solutions of \eqref{D1a}. We use $v_1=\min\{u_1,u_2\}$ in \eqref{D1a} for $u_1$ and  $v_2=\max\{u_1,u_2\}$ in \eqref{D1a} for $u_2$  and observe that $u_1-v_1=(u_1-u_2)_+$, $u_2-v_2=-(u_1-u_2)_+$. Adding the resulting inequalities yields
\begin{equation}
\int_{\Gamma}|\nabla (u_{1}-u_{2})_{+}|^{2}+\int_{\Gamma}(\alpha_2-\alpha_1) g (u_1-u_2)_+ \leq 0\,. \label{D3}
\end{equation}
Hence, we find $u_1 \leq u_2$.

 \item
{\it Solutions of \eqref{D1a} are trivial for subcritical $\alpha$.}

We next show that solutions for $0\leq \alpha\leq \alcrit$ are trivial, unless $\alpha=\alcrit$ and $g$ is constant. Consider $\alpha= \alcrit$ and the unique solution $u$ of the variational inequality \eqref{D1a}. Assume that  $u\not\equiv 0$. We note that
\begin{equation*}
	(1-g)-\alcrit g \geq 1-g -\alcrit g_{\max} = 1-g - 1 +g_{\max} = g_{\max}- g \geq 0.
\end{equation*}
Using $v=0$ in  \eqref{D1a} yields
\begin{equation*}
	\int_\Gamma |\nabla u|^2 + \int_\Gamma (g_{\max}-g)u \leq \int_\Gamma |\nabla u|^2 + \int_\Gamma \big((1-g)-\alcrit g\big)u \leq 0,
\end{equation*}
which gives a contradiction, unless $g\equiv g_{\max}$ and $u$ is a constant.

Now for all $0\leq \alpha<\alcrit$ by the monotonicity property and comparing $u$ with the trivial solution for $\alpha=\alcrit$ we deduce that the solution of \eqref{D1a} vanishes identically, also in the case that $g$ is constant.

By Proposition \ref{prop:var-inequality} this proves the claim.

\item {\it Uniqueness of solution for given $m$.}
Consider $\alcrit\leq\alpha_1\leq \alpha_2$ and two solutions $(u_1,\alpha_1)$, $(u_2,\alpha_2)$ of the variational inequality \eqref{D1a}  with $\int_\Gamma u_1=\int_\Gamma u_2$. Then by monotonicity $u_1\leq u_2$ and the equality $\int_\Gamma (u_1-u_2)=0$ implies $u_1=u_2$. Assume now that $\alpha_1<\alpha_2$. Then $\alpha_2>\alcrit$ and $u_1=u_2\not\equiv 0$ by \eqref{D1a}. We then can use the equality in \eqref{eq:D1b} to deduce that $\alpha_1=\alpha_2$, a contradiction. Hence $(u_1,\alpha_1)=(u_2,\alpha_2)$.

This in particular shows that for any $m>0$ the solution $(u,\xi,\alpha)$ of \eqref{eq:u1}-\eqref{eq:u2} is unique (here we use that $\xi$ is determined by $u,\alpha$ through the representation formula \eqref{eq:repr-xi}).

Furthermore, consider the map $\alpha\mapsto m$ that assigns to $\alpha$ the total mass $m$ of the solution of the variational inequality \eqref{D1a}. By the monotonicity and uniqueness properties proved above $\alpha\mapsto m$ is strictly increasing on $[\alcrit,\alpha_\ast]$.

\item {\it Equivalence of \eqref{eq:u1},\eqref{eq:xi1} and \eqref{D1a}.}
From \eqref{eq:alpha-ast} we already know that $\alpha \leq \alpha_{\ast}$ is necessary for any solution of \eqref{eq:u1},\eqref{eq:xi1}.

We now remove the additional regularity assumption in Proposition \ref{prop:var-inequality} item 6 and show that a solution $(u,\alpha)$ of the variational inequality \eqref{D1a} solves \eqref{eq:u1},\eqref{eq:xi1} for a suitable $\xi$ (determined by \eqref{eq:repr-xi}). In fact let  $(u,\alpha)$ be a solution of \eqref{D1a}.
Then there exists a solution $(\tilde u,\xi,\tilde \alpha)$ of \eqref{eq:u1}-\eqref{eq:u2} with $m=\int_\Gamma u$. In particular, $(\tilde u,\tilde\alpha)$ solves \eqref{D1a} and the uniqueness property above shows that $(u,\alpha)=(\tilde u,\tilde \alpha)$. Therefore $(u,\xi,\alpha)$ solve \eqref{eq:u1},\eqref{eq:xi1}.

This equivalence together with the monotonicity property shown above proves that $m\mapsto (
\alpha,u)$ is strictly monotone. Finally, by \eqref{eq:repr-xi} then also the map $m\mapsto \xi$ is monotone.

\item {\it Proof of items c) and d).}
Therefore, for each $m\in\left[0,m_{\ast}\right) $ we obtain a unique solution $(u,\xi,\alpha)$, $\alpha\in\left[\alcrit,\alpha_{\ast}\right) $ of \eqref{eq:u1}-\eqref{eq:u2}.
For $\alpha<\alpha_{\ast}$ we deduce from \eqref{defAlpha} that $\xi<1$ in a set of positive measure. Therefore $|\{u=0\}|>0$.

If $m\geq m_{\ast}  $ we must have necessarily $\alpha
=\alpha_{\ast}.$  Since $\alpha=\frac{\int_{\Gamma}(1-g) \xi}{\int_{\Gamma}g}$ we obtain that $\xi =1$ for a.e. $x \in \Gamma$,
whence $u$ solves (\ref{D2}). Thus $u$ and $u_{\ast}$ only differ by a constant $A$ and using (\ref{mCrit}) we obtain $A=m-m_{\ast}$, whence item c) in Theorem \ref{CritMass} follows.
\end{enumerate}

\end{proof}

\begin{remark}
The previous theorem shows that polarization, in the sense specified above, occurs if $0<m<m_*$ and that there is no polarization for $m>m_*$.

In the case $m=m_*$ polarization may occur or not. We give an example for a function $g$ that yields a polarized solution $u$ with $m=m_*$, i.e.~with $\left\vert \{u_*=0\}\right\vert >0$:

Fix $\alpha_*=\frac{1}{2}$ and choose any $u_*\in C^2(\Gamma)$ with $u_*\geq 0$, $|\{u_*=0\}|>0$, and $|\Delta u_*|<\frac{1}{2}$. We define
\begin{equation*}
	g := \frac{1-\Delta u_*}{1+\frac{1}{2}},
\end{equation*}
then $g$ is continuous with values in $(0,1)$. Moreover, by construction
\begin{equation*}
	-\Delta u_* = \frac{1}{2}g -(1-g),\qquad \alpha_* = \frac{\int_\Gamma (1-g)}{\int_\Gamma g}.
\end{equation*}

A sufficient condition for the property $|\{u_*=0\}|=0$ (i.e.~no polarization occurs) for $m=m_*$ is
\begin{equation*}
	\left|\left\{g=\frac{1}{1+\alpha_*}\right\}\right| = 0.
\end{equation*}
In fact, almost everywhere in $\{u_*=0\}$ we have $\Delta u_*=0$ and hence $g=\frac{1}{1+\alpha_*}$.
\end{remark}

\subsection{Localization of $\{u>0\}$ if $m\rightarrow 0.$}\label{Ss.localization}
By the results of the previous section, for non-constant $g$ and sufficiently small mass polarization occurs.
We can say a bit more about the patterns that arise if we consider the limit of vanishing mass: in a well-defined sense the polarized region concentrates on the set where $g$ (and hence also $c$) takes its maximal value.
To give a precise statement, let $S=\{ x\in \Gamma\,|\, g(x)=g_{\max}\}$ and $\alcrit=\frac{1-g_{\max}}{g_{\max}}$ as above.

\begin{proposition}\label{prop:localization}
 Consider a sequence $m_n\searrow 0$ and denote the corresponding solution to \eqref{eq:u1}-\eqref{eq:u2} by $(u_n,\xi_n,\alpha_n)$. Then the following holds
 \begin{enumerate}
 \item $\alpha_n \searrow \alcrit$.
 \item Assume that $u_n$ attains a maximum in $x_n$. Then $g(x_n) \to g_{\max}$.
 \item $\|u_n\|_{W^{2,p}(\Gamma)} \to 0$ for all $1\leq p<\infty$.
 \item Let $\Omega_n:=\{u_n >0\}$. Then
\begin{equation}\label{eq:omegan}
 \frac{1}{|\Omega_n|} \int_{\Omega_n} \Big(1-\frac{g}{g_{\max}}\Big) = \frac{\alpha_n - \alcrit}{1+\alpha_n} \to 0 \qquad \mbox{ as } n \to \infty.
\end{equation}
\item For any $\delta>0$ there exists $n_0$ such that if $\dist(x,S)>\delta$ then $u_n(x)=0$ for all $n \geq n_0$.
\end{enumerate}
\end{proposition}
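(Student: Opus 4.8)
The plan is to treat the five items in order, using the variational inequality \eqref{D1a} and the compatibility relation \eqref{defAlpha}--\eqref{eq:repr-xi} as the main tools, together with the explicit comparison solution $u_\ast$. First, for item (1), I observe that by Theorem \ref{CritMass}, item b), the map $\alpha \mapsto (u,\xi)$ is monotone increasing and the induced map $\alpha \mapsto m$ is strictly increasing on $[\alcrit,\alpha_\ast]$; since $m_n \searrow 0$ corresponds to $\alpha_n$ decreasing towards its infimum, and since for $\alpha < \alcrit$ (and for $\alpha=\alcrit$ when $g$ is nonconstant) we have $u \equiv 0$, hence $m=0$, the only possible limit is $\alcrit$. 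So $\alpha_n \searrow \alcrit$. For item (3), I would use $v=0$ as a competitor in \eqref{D1a} to get $\int_\Gamma |\nabla u_n|^2 \leq \int_\Gamma (\alpha_n g - (1-g)) u_n \leq (\alpha_n - \alcrit)\int_\Gamma g\, u_n + \int_\Gamma (g_{\max}-g)(-u_n) \le (\alpha_n-\alcrit)\int_\Gamma g\, u_n$; combined with $\int_\Gamma u_n = m_n \to 0$ and the Poincaré inequality this forces $\|u_n\|_{H^1(\Gamma)} \to 0$, and then elliptic $W^{2,p}$ regularity applied to $-\Delta u_n = \alpha_n g - (1-g)\xi_n$ with the right-hand side bounded in $L^\infty$ and $u_n \to 0$ in $L^p$ upgrades this to $\|u_n\|_{W^{2,p}(\Gamma)} \to 0$.

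For item (4), the identity \eqref{eq:omegan} is essentially an algebraic consequence of \eqref{defAlpha} together with $\xi_n = 1$ on $\Omega_n = \{u_n>0\}$ and the fact that $\{u_n=0\}$ contributes $\int_{\{u_n=0\}}(\alpha_n g - (1-g)\xi_n) = 0$ after using \eqref{eq:repr-xi}; concretely, integrating \eqref{eqB3} over $\Omega_n$ (with $a_4=1$) gives $0 = \int_{\partial\Omega_n}\partial_\nu u_n + \alpha_n \int_{\Omega_n} g - \int_{\Omega_n}(1-g)$, and since $\nabla u_n = 0$ a.e. on $\partial\Omega_n$ (as $u_n \geq 0$ attains its minimum value $0$ there, using $W^{2,p}$ regularity and Stampacchia), the boundary term vanishes, yielding $\alpha_n \int_{\Omega_n} g = \int_{\Omega_n}(1-g)$. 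Dividing by $\int_{\Omega_n} g_{\max}$ and rearranging gives exactly $\frac{1}{|\Omega_n|}\int_{\Omega_n}(1 - g/g_{\max}) = \frac{\alpha_n - \alcrit}{1+\alpha_n}$, using $\alcrit = (1-g_{\max})/g_{\max}$. The convergence to $0$ is then immediate from item (1).

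For item (2), suppose $u_n(x_n) = \max_\Gamma u_n > 0$ (if $u_n \equiv 0$ there is nothing to prove, but in fact $m_n>0$ forces $u_n \not\equiv 0$). At an interior maximum of the $W^{2,p} \hookrightarrow C^1$ function $u_n$ we have $-\Delta u_n(x_n) \geq 0$, so from \eqref{eq:u1} with $\xi_n(x_n)=1$ (since $u_n(x_n)>0$) we get $\alpha_n g(x_n) - (1-g(x_n)) \geq 0$, i.e. $g(x_n) \geq \frac{1}{1+\alpha_n}$; since $\alpha_n \to \alcrit$ and $\frac{1}{1+\alcrit} = g_{\max}$, this forces $\liminf g(x_n) \geq g_{\max}$, hence $g(x_n) \to g_{\max}$. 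Finally, for item (5), I argue by contradiction and compactness: the set $S = \{g = g_{\max}\}$ is closed and if the claim failed there would be $\delta>0$, a subsequence, and points $x_n$ with $\dist(x_n,S) > \delta$ and $u_n(x_n) > 0$. Since $u_n(x_n)>0$, reasoning as in item (2) applied now at a local situation — more precisely, since $\{u_n>0\}$ is open and on it $-\Delta u_n = \alpha_n g - (1-g)$, and $u_n \geq 0$ vanishes on $\partial\{u_n>0\}$ — I would use a barrier/maximum-principle argument: on the component of $\{u_n>0\}$ containing $x_n$, the function $u_n$ is subharmonic wherever $\alpha_n g - (1-g) \geq 0$, i.e. where $g \geq \frac{1}{1+\alpha_n}$; but on $\{\dist(\cdot,S) > \delta\}$ we have $g \leq g_{\max} - \eta(\delta)$ for some $\eta(\delta)>0$ by continuity and compactness of $\Gamma$, so for $n$ large ($\alpha_n$ close to $\alcrit$) we have $\alpha_n g - (1-g) \leq \alpha_n(g_{\max}-\eta) - (1-g_{\max}+\eta) = (1+\alpha_n)(g_{\max}-\eta) - 1 \le (1+\alcrit+o(1))(g_{\max}-\eta)-1 < 0$. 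Thus $-\Delta u_n < 0$ throughout $\{u_n > 0\} \cap \{\dist(\cdot,S)>\delta\}$, so $u_n$ is superharmonic there; combined with $u_n = 0$ on the part of the boundary inside this region and $u_n$ small (from item (3), $\|u_n\|_\infty \to 0$) on the rest, the minimum principle for superharmonic functions would force $u_n \leq \max(\text{boundary values})$, which is $\to 0$, but we need $u_n(x_n)>0$ to be excluded — here the cleaner route is: $u_n$ superharmonic and $\geq 0$ with $u_n = 0$ somewhere on the boundary of its positivity set within the $\delta$-region; if a component of $\{u_n>0\}$ lies entirely in $\{\dist(\cdot,S)>\delta\}$ then on it $u_n$ is superharmonic, positive inside, zero on the boundary — impossible by the minimum principle unless it is empty. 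So any component of $\{u_n>0\}$ must reach within distance $\delta$ of $S$; then using item (4), which says $|\Omega_n| \int_{\Omega_n}(1-g/g_{\max})/|\Omega_n| \to 0$ together with the fact that $1 - g/g_{\max} \geq \eta(\delta)/g_{\max}$ on $\{\dist(\cdot,S)>\delta\}$, forces $|\Omega_n \cap \{\dist(\cdot,S)>\delta\}| \to 0$, and then a quantitative pointwise bound (e.g. from $\|u_n\|_{C^1} \to 0$, or a local gradient estimate: $u_n$ vanishes on a boundary point within distance $O(\delta)$ and $\|\nabla u_n\|_\infty \to 0$) gives $u_n(x_n) \to 0$; but we wanted $u_n(x_n) = 0$ exactly, which follows from the superharmonicity argument above once $n$ is large enough that $\{g \geq \frac{1}{1+\alpha_n}\} \subset \{\dist(\cdot,S) \leq \delta\}$, since then $\{u_n>0\}$ cannot meet $\{\dist(\cdot,S)>\delta\}$ at all.

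\textbf{Main obstacle.} The delicate point is item (5): turning the heuristic "$\{u_n>0\}$ shrinks onto $S$" into the clean statement $u_n(x) = 0$ for $\dist(x,S)>\delta$ and all large $n$. The right mechanism is the maximum principle combined with the sign of $\alpha_n g - (1-g)$: once $\alpha_n$ is close enough to $\alcrit$, the set where the right-hand side of \eqref{eq:u1} is nonnegative is contained in a small neighborhood of $S$, and outside that neighborhood $u_n$ cannot be positive because it would be superharmonic, nonnegative, and forced to vanish on the boundary of its own positivity set. I expect the careful bookkeeping of "how close to $\alcrit$ is close enough, as a function of $\delta$" — which uses the uniform continuity of $g$ on the compact manifold $\Gamma$ — to be the one place requiring genuine care rather than routine estimation.
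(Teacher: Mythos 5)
Items 1--4 are essentially correct and parallel the paper, with minor variations. Your item~1 uses the monotonicity $\alpha\mapsto m$ from Theorem~\ref{CritMass} together with the fact that $m(\alpha)=0$ for $\alpha\le\alcrit$, whereas the paper gives a self-contained test-function argument (multiply \eqref{eq:u1} by $\varphi\ge0$ supported where $g>g_{\max}-1/\ell$ and use $\int_\Gamma u_n\Delta\varphi=O(m_n)$); both routes work. In item~2 the paper is a bit more careful: $W^{2,p}(\Gamma)\hookrightarrow C^{1,\gamma}$ alone does not give a pointwise sign of $\Delta u_n(x_n)$, but since $u_n>0$ near $x_n$ one has $\xi_n=1$ there and Schauder regularity upgrades $u_n$ to $C^2$ near $x_n$, which is what justifies $-\Delta u_n(x_n)\ge0$. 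Your item~3 reproduces the paper's ``by interpolation'' step verbatim (bounded in $W^{2,p}$, $\to0$ in $L^1$, hence $\to0$ in $W^{2,p}$); as stated this interpolation gives only convergence in $W^{s,q}$ for $s<2$. The missing observation, which neither you nor the paper spell out, is that $\|\Delta u_n\|_{L^1}=2\int_{\Omega_n}(\alpha_n g-(1-g))_+\le 2(\alpha_n-\alcrit)g_{\max}|\Gamma|\to0$ (using the balance from item~4), and $L^1$--$L^\infty$ interpolation on $\Delta u_n$ plus elliptic regularity then gives $\|u_n\|_{W^{2,p}}\to 0$. Item~4 matches the paper exactly.

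Item~5 has a genuine gap. The component-wise maximum-principle argument you give only rules out connected components of $\{u_n>0\}$ that lie \emph{entirely} inside $\{\dist(\cdot,S)>\delta\}$; it does not exclude a component that straddles, i.e.\ reaches within distance $\delta$ of $S$ and also extends into $\{\dist(\cdot,S)>\delta\}$. On such a component the maximum principle only yields $u_n\le\|u_n\|_\infty\to0$, not $u_n\equiv 0$. Your final step asserts that once $\{g\ge\frac{1}{1+\alpha_n}\}\subset\{\dist(\cdot,S)\le\delta\}$ ``then $\{u_n>0\}$ cannot meet $\{\dist(\cdot,S)>\delta\}$ at all'' --- but this is precisely the statement to be proven and does not follow from what precedes. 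The paper's proof closes the gap with a local quadratic barrier: fix $x_0$ with $\dist(x_0,S)>\delta$, set $\tilde u(x)=\frac{(\alpha_n+1)\theta}{16}|x-x_0|^2$ on $D(x_0,\delta/2)$ (a strict supersolution once the mean-curvature correction to $\Delta_\Gamma|x-x_0|^2$ is accounted for), note that $\tilde u\ge u_n$ on $\partial D(x_0,\delta/2)\cap\{u_n>0\}$ for $n$ large because $\|u_n\|_\infty\to0$ by item~3 while $\tilde u$ is bounded below there, and $\tilde u\ge0=u_n$ on $D(x_0,\delta/2)\cap\partial\{u_n>0\}$; since $-\Delta(u_n-\tilde u)<0$ in $D(x_0,\delta/2)\cap\{u_n>0\}$, the comparison principle gives $u_n(x_0)\le\tilde u(x_0)=0$, hence $u_n(x_0)=0$. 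This barrier, which touches zero at $x_0$ and dominates $u_n$ on the relevant boundary, is what turns ``$u_n$ is small'' into ``$u_n$ vanishes''; your sketch does not construct it. (Also a terminological slip throughout: $-\Delta u_n<0$ means $u_n$ is \emph{sub}harmonic, not superharmonic; you apply the correct inequality, so this is cosmetic.)
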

\begin{proof}
 \begin{enumerate}
 \item We multiply \eqref{eq:u1} by a positive test function $\varphi$ and integrate by parts to obtain
 \[
  0 = \int_{\Gamma} u_n \Delta \varphi - \int_{\Gamma} (1-g)\xi_n\varphi + \alpha_n \int_{\Gamma} g\varphi\,.
 \]
 Now $|\int_{\Gamma} u_n \Delta \varphi |\leq C m_n \to 0$. Hence $\limsup_{n \to \infty} \alpha_n \leq \frac{\int_{\Gamma} (1-g)\varphi}{\int_{\Gamma} g \varphi }$. We now take a sequence $(\varphi_\ell)_\ell$ of test functions
 such that $\spt(\varphi_\ell)\subset \{g>g_{\max}-\frac{1}{\ell}\}$.
\item Since $m>0$ we have $u_n(x_n)>0$ and $u_n>0$ in some neighborhood of $x_n$. Then $\xi_n=1$ in this neighborhood and $u_n$ is $C^2$-regular in $x_n$. We therefore have  $-\Delta u_n(x_n)\geq 0$, $\xi_n(x_n)=1$ and  hence
 $0 \leq -  (1-g(x_n))+\alpha_n g(x_n)$, which implies $\alpha_n \geq \frac{ 1-g(x_n)}{g(x_n)} \geq \alcrit$. Since $\alpha_n \to \alcrit$ it follows that $g(x_n) \to g_{\max}$.

 \item
 By \eqref{B4}, $\alpha\leq \alpha_*$ and since $(m_n)_n$ is bounded we deduce that $(u_n)_n$ is uniformly bounded in $W^{2,p}(\Gamma)$ for any $1\leq p<\infty$. Since $u_n \to 0$ in $L^1(\Gamma)$ the result follows by interpolation.

 \item
 Integrate \eqref{eq:u1} over $\Omega_n$ and recall that $\xi_n=1$ in $\Omega_n$. Since $u_n \in W^{2,p}(\Gamma)$ we have $\int_{\Omega_n} \Delta u_n=0$ and we find
 \begin{align*}
  0&= - \int_{\Omega_n} (1-g) + \alpha_n \int_{\Omega_n}g\\
  &= -|\Omega_n| + (1+\alpha_n) \int_{\Omega_n} (g-g_{\max}) + g_{\max}(1+\alpha_n) |\Omega_n|\\
  &= -(1+\alpha_n) \int_{\Omega_n} (g_{\max}-g)   + (\alpha_n - \alcrit) g_{\max}|\Omega_n|
 \end{align*}
 and \eqref{eq:omegan} follows.

 \item
It is sufficient to prove the claim for all $0<\delta<\delta_0$, where $\delta_0>0$ is an arbitrary fixed number, to be chosen below. By continuity of $g$ there exists $\theta>0$, $\theta=\theta(\delta_0)$, such that
\begin{equation*}
	g<g_{\max}-\theta\quad\text{ in }\{x\in\Gamma\,:\,\dist(x,S)>r\},\quad r=\frac{\delta}{2}.
\end{equation*}
This implies, for sufficiently large $n$, that
\begin{align*}
	\alpha_n  g - (1-g) = \alpha_n g - (1-g) - \alpha_0 g_{\max} +(1-g_{\max}) &< (\alpha_n-\alpha_0)g_{\max} - (\alpha_n+1)\theta\\
	& \leq - \frac{2}{3}(\alpha_n+1)\theta
\end{align*}
in $\{x\in\Gamma\,:\,\dist(x,S)>r\}$.

Now fix an arbitrary $x_0\in\Gamma$ with $\dist(x_0,S)>\delta$ and assume $u_n(x_0)>0$. Then $g<g_{\max}-\theta$ on $D(x_0,r):=B(x_0,r)\cap \Gamma$.

Next define $\tilde u: D(x_0,r)\to\R$, $\tilde u(x):= \frac{(\alpha_n+1)\theta}{16}|x-x_0|^2$. For $\delta_0>0$ sufficiently small, only depending on the geometry of $\Gamma$, we then have
\begin{equation*}
	-\Delta\tilde u = \frac{(\alpha_n+1)\theta}{16}\big(-4 - 2\vec{H}(x)\cdot (x-x_0)\big) \geq -\frac{(\alpha_n+1)\theta}{2}\quad\text{ on }D(x_0,r),
\end{equation*}
where $\vec H$ denotes the mean curvature vector of $\Gamma$.
On the other hand, since $D(x_0,r)\subset \{\dist(\cdot,S)>r\}$
\begin{equation*}
	-\Delta u_n = \alpha_n g -(1-g) \leq - \frac{2}{3}(\alpha_n+1)\theta \quad\text{ on }D(x_0,r)\cap \{u_n>0\}.
\end{equation*}
In addition $0\leq u_n < \frac{(\alpha_n+1)\theta}{32}r^2$ for $n\geq n_0$ for some $n_0\in\N$ sufficiently large, only depending on $
\theta,r$ (thus only depending on $\delta$), since $u_n\to 0$ uniformly by item 3. Then
\begin{align*}
	-\Delta (u_n-\tilde u)& \leq -(\alpha_n+1)\theta + \frac{(\alpha_n+1)\theta}{2} <0\quad\text{ on }D(x_0,r)\cap \{u_n>0\},\\
	 u_n-\tilde u &\leq \frac{(\alpha_n+1)\theta}{32}r^2 - \frac{(\alpha_n+1)\theta}{16}r^2 <0 \quad\text{ on }\partial D(x_0,r)\cap \{u_n>0\},\\
	 u_n-\tilde u & <0 \quad\text{ on } D(x_0,r)\cap \partial \{u_n>0\}.
\end{align*}
By the weak maximum principle it follows that
\begin{equation*}
	u_n - \tilde u  \leq 0 \quad\text{ in } D(x_0,r)\cap \{u_n>0\},
\end{equation*}
which yields $u_n(x_0)\leq \tilde u_n(x_0)=0$, a contradiction.

This shows that $u_n(x_0)=0 $ for $n\geq n_0$, $n_0=n_0(\delta)$. Since $x_0\in \{\dist(\cdot,S)>\delta\}$ was arbitrary this proves the claim.
\end{enumerate}

\end{proof}

%
\section{Finite cytosolic diffusion $D<\infty$}\label{S.Dfinite}
In this section we now deal with the case of finite cytosolic diffusion $D<\infty$ and the fully coupled system. Using a similar
rescaling as in the case $D=\infty$ we derive an asymptotic limit in the form of a bulk-surface obstacle problem. In the case of spherical cell geometry
we characterize polarized states in terms of this limit problem and in terms of the total amount of proteins in the cell.

\subsection{Derivation of a nonlocal obstacle problem}\label{Ss.limeps}
For finite $D$ we use a similar rescaling of the steady-state equation \eqref{eq:org-stat-1}-\eqref{eq:org-stat-5} as in the previous section but consider in addition to \eqref{E2} that $D$ becomes large with $\eps\to 0$, more precisely $D\leadsto \frac{1}{\eps}D$ with $D$ of order one. This yields the following bulk-surface system.
\begin{align}
	0  &  =\Delta U_\eps+\left( \eps a_{1}+\eps\frac{a_{2}U_\eps}{a_{3}\eps%
+U_\eps}+c\right)  v_\eps-\frac{a_{4}U_\eps}{\eps+U_\eps} &&\text{ on }\Gamma\label{eq:eps-stat-1}\\
	0  &  =\eps\Delta v_\eps-\left(  a_{1}\eps+\eps\frac{a_{2}U_\eps}{a_{3}\eps%
+U_\eps}+c\right)  v_\eps+\frac{a_{4}U_\eps}{\eps+U_\eps}-a_{5}v_\eps+a_{6}w_\eps &&\text{ on }\Gamma\label{eq:eps-stat-2}\\
	0  &  =D\Delta w_\eps &&\text{ in } \Omega\label{eq:eps-stat-3}\\
-D\frac{\partial w_\eps}{\partial n}  &  =-a_{5}v_\eps+a_{6}w_\eps &&\text{ on }\Gamma\label{eq:eps-stat-4}.%
\end{align}
and
\begin{equation}
	\int_\Gamma (U_\eps+\eps v_\eps) + \int_B \eps w_{\eps} =m.  \label{eq:mass-eps}
\end{equation}
By Theorem \ref{thm:stat-states-Dpositive} for given $m>0$ there exists a nonnegative solution of this system. We first prove some uniform bounds.
\begin{theorem}\label{thm:bounds-eps-stat}
For any $0<m\leq m_0$ and any nonnegative solution $(w_{\eps},U_\eps,v_\eps)$ of \eqref{eq:eps-stat-1}-\eqref{eq:mass-eps} we have
\begin{equation}
	\|U_\eps\|_{H^2(\Gamma)} +\|v_\eps\|_{L^2(\Gamma)}+\|w_{\eps}\|_{H^1(\Omega)}  \leq  C(a_4,a_5,a_6,m_0,c,\Omega). \label{eq:bounds-eps-stat}
\end{equation}
\end{theorem}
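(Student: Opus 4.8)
The plan is to mimic the energy-type argument used in Proposition \ref{prop:invar} and Theorem \ref{thm:stat-states-Dpositive}, but now for the stationary rescaled system, tracking the $\eps$-dependence carefully. First I would test \eqref{eq:eps-stat-1} with $U_\eps$, \eqref{eq:eps-stat-2} with $v_\eps$, and \eqref{eq:eps-stat-3} with $w_\eps$, then add (with the weight $a_6$ in front of the $w_\eps$-equation, as in the definition of $L$). Using \eqref{eq:eps-stat-4} for the boundary term coming from the $w_\eps$-equation, the cross terms $(a_6 w_\eps - a_5 v_\eps)$ organize as before and the bad reaction terms are controlled by the nonnegativity and by the elementary bounds $0\leq \frac{U_\eps}{\eps+U_\eps}\leq 1$, $0\leq \frac{U_\eps}{a_3\eps+U_\eps}\leq 1$, and $\eps a_1,\eps a_2\leq C$ for small $\eps$. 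This yields
\begin{equation*}
	\int_\Gamma |\nabla U_\eps|^2 + \eps \int_\Gamma |\nabla v_\eps|^2 + a_6 D\int_\Omega |\nabla w_\eps|^2 + \int_\Gamma a_5 v_\eps^2 \;\leq\; C\int_\Gamma\big(v_\eps + U_\eps^2 + v_\eps^2 + w_\eps U_\eps\big),
\end{equation*}
where one has to be a little careful that the naturally bad term $\frac{a_4 U_\eps}{\eps+U_\eps}U_\eps \leq a_4 U_\eps$ is actually \emph{good} (it has a sign and is at most linear), and that the term $c\,v_\eps U_\eps$ appearing with a minus sign is also favorable.

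Next I would exploit the zeroth-order information already extracted in the proof of Theorem \ref{Limeps}: integrating \eqref{eq:eps-stat-2} over $\Gamma$ and combining with the integrated \eqref{eq:eps-stat-1} gives, exactly as in \eqref{A2}--\eqref{A3}, the bounds $\int_\Gamma v_\eps \leq \frac{a_4}{c_0}|\Gamma|$ and $w_\eps|\Gamma| = \frac{a_5}{a_6}\fint_\Gamma v_\eps$, hence $\|w_\eps\|_{L^\infty(\Omega)}\leq C$ since $w_\eps$ is harmonic in $\Omega$ with a constant trace-average constraint — actually, from \eqref{eq:eps-stat-3} $w_\eps$ is harmonic, and \eqref{eq:eps-stat-4} together with the $L^2(\Gamma)$-bound on $v_\eps$ gives an $H^1(\Omega)$-bound on $w_\eps$ directly via the weak formulation tested with $w_\eps$. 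From \eqref{eq:mass-eps} and $w_\eps\geq 0$ we also get $\int_\Gamma U_\eps \leq m \leq m_0$. Feeding these $L^1(\Gamma)$-bounds on $U_\eps$ and $v_\eps$, and the $L^\infty$-bound on $w_\eps$, into the right-hand side of the energy inequality — using the same interpolation/Poincaré trick as in Proposition \ref{prop:invar} to absorb $\int_\Gamma U_\eps^2$ into a small multiple of $\int_\Gamma|\nabla U_\eps|^2$ plus $C(\int_\Gamma U_\eps)^2$, and similarly for $v_\eps$ — closes the estimate and yields uniform $H^1(\Gamma)$-bounds for $U_\eps$, an $L^2(\Gamma)$-bound for $v_\eps$, and the $H^1(\Omega)$-bound for $w_\eps$ (the factor $D\geq 1$ only helps).

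Finally, to upgrade $U_\eps$ from $H^1(\Gamma)$ to $H^2(\Gamma)$ I would return to \eqref{eq:eps-stat-1} written as $-\Delta U_\eps = \big(\eps a_1 + \frac{\eps a_2 U_\eps}{a_3\eps+U_\eps}+c\big)v_\eps - \frac{a_4 U_\eps}{\eps+U_\eps}$ and observe that the right-hand side is bounded in $L^2(\Gamma)$: the last term is bounded by $a_4$ pointwise, and the first term is a bounded ($L^\infty$) coefficient times $v_\eps\in L^2(\Gamma)$, with constants independent of $\eps$. Elliptic $L^2$-regularity for the Laplace--Beltrami operator (as recalled in the Remark after Assumption \ref{ass:pbm2}) then gives the uniform $H^2(\Gamma)$-bound on $U_\eps$, completing \eqref{eq:bounds-eps-stat}. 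I expect the only genuine subtlety to be the bookkeeping of which reaction terms carry a favorable sign versus which must be absorbed — in particular making sure the nonlocal/boundary coupling $\int_\Gamma(a_6 w_\eps - a_5 v_\eps)v_\eps$ and the cytosolic term are handled so that no $\eps^{-1}$ factor survives; everything else is a routine repetition of the a-priori estimates already established for the unscaled problem.
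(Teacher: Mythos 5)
Your plan is workable but takes a genuinely different, and heavier, route than the paper, and one sign claim in your write-up is wrong. The paper never tests \eqref{eq:eps-stat-1} against $U_\eps$: it only integrates that equation over $\Gamma$ (giving $\int_\Gamma v_\eps \leq a_4 c_0^{-1}|\Gamma|$) and later feeds it into elliptic regularity. The only energy estimates performed are on \eqref{eq:eps-stat-2} tested with $a_5 v_\eps$ and on \eqref{eq:eps-stat-3} tested with $a_6 w_\eps$; after using \eqref{eq:eps-stat-4} the coupling produces the favorable complete square $-\int_\Gamma (a_5 v_\eps - a_6 w_\eps)^2$, and the only remaining term, $a_4 a_5 \int_\Gamma \frac{U_\eps}{\eps+U_\eps} v_\eps \leq a_4 a_5 \int_\Gamma v_\eps$, is already controlled by the $L^1$-bound on $v_\eps$. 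This gives $\|v_\eps\|_{L^2(\Gamma)}$ at once; a Poincar\'e argument on $\Omega$ (using $\int_\Gamma w_\eps \leq C$, obtained by integrating the sum of \eqref{eq:eps-stat-1} and \eqref{eq:eps-stat-2}) yields $\|w_\eps\|_{H^1(\Omega)}$; and the $H^2(\Gamma)$-bound for $U_\eps$ then follows directly from elliptic $L^2$-regularity applied to \eqref{eq:eps-stat-1} (right-hand side now in $L^2(\Gamma)$), the mass bound $\int_\Gamma U_\eps \leq m_0$, and interpolation --- no $H^1$-energy estimate on $U_\eps$ is ever needed. By contrast, your additional test of \eqref{eq:eps-stat-1} with $U_\eps$ produces the cross term $\int_\Gamma c\, v_\eps U_\eps$, which you describe as ``appearing with a minus sign'' and ``favorable.'' This is incorrect: that test gives $\int_\Gamma |\nabla U_\eps|^2 + a_4 \int_\Gamma \frac{U_\eps^2}{\eps + U_\eps} = \int_\Gamma \big(\eps a_1 + \frac{\eps a_2 U_\eps}{a_3 \eps + U_\eps} + c\big) v_\eps U_\eps$, and since $v_\eps, U_\eps \geq 0$ the right-hand side is genuinely positive, a term that must be absorbed, not dropped. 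It \emph{can} be absorbed --- by Young's inequality, the $L^2(\Gamma)$ control of $v_\eps$ from the other energy estimate, and Poincar\'e/interpolation with $\|U_\eps\|_{L^1(\Gamma)} \leq m_0$ --- so your plan does close, but you should correct the sign; moreover, the extra estimate is unnecessary and, as a small side effect, brings the coefficients $a_1, a_2$ into the constants, whereas the paper's statement and proof avoid any such dependence.
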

\begin{proof}
Integrating \eqref{eq:eps-stat-1} over $\Gamma$ we deduce that
\begin{equation}
	\int_\Gamma v_\eps \leq \frac{1}{c_0}a_4|\Gamma|. \label{eq:ctrl-mass-veps}
\end{equation}
Similarly, by taking the sum of \eqref{eq:eps-stat-1} and \eqref{eq:eps-stat-2}, integrating and using \eqref{eq:ctrl-mass-veps} we also obtain
\begin{equation}
	\int_\Gamma w_\eps \leq \frac{1}{a_6c_0}a_4a_5|\Gamma|. \label{eq:ctrl-mass-weps}
\end{equation}

We next test \eqref{eq:eps-stat-2},\eqref{eq:eps-stat-3} with $a_5v_\eps$ and $a_6w_{\eps}$, respectively. Summing up the resulting equations gives
\begin{align}
	\int_\Gamma \Big(\eps a_5|\nabla v_\eps|^2+ ca_5v_\eps^2 \Big) + \int_\Omega a_6D|\nabla w_{\eps}|^2&\leq \int_\Gamma \Big( a_4a_5 v_\eps -(a_5v_\eps -a_6w_\eps)^2\Big) \notag\\
	&\leq C(a_4,a_5,c_0,\Omega), \label{eq:bounds-eps-stat-1}
\end{align}
and in particular a uniform $L^2(\Gamma)$-bound for $v_\eps$.

Applying the Poincar{\'e} inequality in $\{w\in H^1(\Omega):\int_\Gamma w =0\}$ we deduce that
\begin{align*}
	\|w_\eps\|_{L^2(\Omega)}^2 &= \Big\|w_\eps -\frac{1}{|\Gamma|}\int_\Gamma w_\eps\Big\|_{L^2(\Omega)}^2 + \frac{|\Omega|}{|\Gamma|^2}\Big(\int_\Gamma w_\eps\Big)^2\\
  &\leq C(\Omega)\|\nabla w_\eps\|_{L^2(\Omega)}^2+C(\Omega)\Big(\int_\Gamma w_\eps\Big)^2 \leq C(a_4,a_5,a_6,c_0,\Omega),
\end{align*}
and therefore obtain the required bound for $\|w_\eps\|_{H^1(\Omega)}$.

Finally, by $L^2$-regularity theory for \eqref{eq:eps-stat-1}, the mass bound \eqref{eq:mass-eps} and interpolation we conclude \eqref{eq:bounds-eps-stat}.
\end{proof}

With these uniform estimates we can pass to the limit $\eps\to 0$. We then obtain a generalized obstacle type problem.
\begin{theorem}\label{thm:obs1}
Consider a sequence $(w_{\eps},U_\eps,v_\eps)$ of nonnegative solutions to \eqref{eq:eps-stat-1}-\eqref{eq:eps-stat-4} with total mass $0<m\leq m_0$. Then there exists a subsequence $\eps\to 0$ and functions $(w,u,v)$
such that
\begin{align*}
	U_\eps &\weakto u\quad\text{ in }H^2(\Gamma)\,, 
	&v_\eps &\weakto v\quad\text{ in }L^2(\Gamma)\,, 
	&w_\eps &\weakto w\quad\text{ in }H^1(\Omega)\,. 
\end{align*}
Moreover there exists $\xi\in L^\infty(\Gamma)$ with $0\leq\xi\leq 1$ such that
\begin{align}
	0  &  =\Delta u+c v-a_4\xi &\text{ on }\Gamma,\label{eq:obs-1}\\
	0  &  =-c v+a_4\xi-a_{5}v+a_{6}w& \text{ on }\Gamma, \label{eq:obs-2}\\
	0  &  =D\Delta w &\text{ in } \Omega, \label{eq:obs-3}\\
	-D\frac{\partial w}{\partial n}  &  =-a_{5}v+a_{6}w&\text{ on }\Gamma  \label{eq:obs-4}%
\end{align}
and such that $u \xi =u$ and $\int_\Gamma u=m$ hold. Moreover, $w,u$ and $v$ are all nonnegative and $u\in W^{2,p}(\Gamma)$ for any $1\leq p<\infty$, $w\in C^\infty(\Omega)\cap C^0(\overline{\Omega})$, and $v\in L^\infty(\Gamma)$, with
\begin{equation}
	\|u\|_{W^{2,p}(\Gamma)}+\|w\|_{C^0(\overline{\Omega})} + \|v\|_{L^\infty(\Omega)} \leq C(p,a_4,a_5,a_6,D,\Omega,m_0). \label{eq:est-thm4.2}
\end{equation}
\end{theorem}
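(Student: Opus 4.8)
The plan is to use the uniform bounds from Theorem \ref{thm:bounds-eps-stat} to extract weakly convergent subsequences and then to identify the limit equations, the main subtlety being the passage to the limit in the two reaction terms $\frac{a_2 U_\eps}{a_3\eps + U_\eps}v_\eps$ (which should vanish) and $\frac{a_4 U_\eps}{\eps+U_\eps}$ (which converges to $a_4\xi$), exactly as in the proof of Theorem \ref{Limeps}. First I would invoke \eqref{eq:bounds-eps-stat} to obtain a subsequence along which $U_\eps\weakto u$ in $H^2(\Gamma)$, $v_\eps\weakto v$ in $L^2(\Gamma)$, and $w_\eps\weakto w$ in $H^1(\Omega)$; by Rellich these converge strongly in $L^2(\Gamma)$, $L^2(\Omega)$ and, using the compactness of the trace $H^1(\Omega)\to L^2(\Gamma)$, $w_\eps\to w$ strongly in $L^2(\Gamma)$ as well. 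Since $0\le \frac{U_\eps}{\eps+U_\eps}\le 1$, we may also assume $\frac{U_\eps}{\eps+U_\eps}\weakstarto \xi$ weakly$^*$ in $L^\infty(\Gamma)$ with $0\le\xi\le1$, and similarly $\frac{U_\eps}{a_3\eps+U_\eps}\weakstarto \tilde\xi\in[0,1]$. Nonnegativity of $u,v,w$ is inherited from the nonnegativity of the approximating sequences and from strong $L^1$-convergence.

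Next I would pass to the limit in each equation tested against smooth functions. In \eqref{eq:eps-stat-3} and \eqref{eq:eps-stat-4} the coefficients are fixed and the only nonlinearity is absent, so weak $H^1(\Omega)$-convergence immediately gives $D\Delta w=0$ in $\Omega$ and $-D\partial_n w = -a_5 v + a_6 w$ on $\Gamma$ in the weak (and then, by elliptic regularity, classical interior) sense. For \eqref{eq:eps-stat-1}, multiplying by $\varphi\in C^2(\Gamma)$ and integrating by parts: the term $\int_\Gamma U_\eps\Delta\varphi\to\int_\Gamma u\Delta\varphi$; the terms $\eps a_1\int v_\eps\varphi$ and $\eps\int\frac{a_2 U_\eps}{a_3\eps+U_\eps}v_\eps\varphi$ vanish because $v_\eps$ is bounded in $L^2(\Gamma)$ and the factor $\eps\to0$ (the Michaelis–Menten factor being bounded by $1$); $\int c\,v_\eps\varphi\to\int c\,v\varphi$ by strong $L^2$-convergence of $v_\eps$; and $\int\frac{a_4 U_\eps}{\eps+U_\eps}\varphi\to a_4\int\xi\varphi$ by the weak$^*$ convergence. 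This yields \eqref{eq:obs-1}. Adding \eqref{eq:eps-stat-1} and \eqref{eq:eps-stat-2} and passing to the limit in the same way (the $\eps\Delta v_\eps$ term vanishing since $v_\eps$ is bounded in $L^2$ and, after testing, $\eps\int v_\eps\Delta\varphi\to0$) gives \eqref{eq:obs-2}. From \eqref{eq:obs-2} we read off $v = \frac{a_4\xi + a_6 w}{c+a_5}$, and since $\xi,w$ are bounded (the latter by the maximum principle / $L^\infty$-bound below), $v\in L^\infty(\Gamma)$. The mass constraint $\int_\Gamma u = m$ follows by passing to the limit in \eqref{eq:mass-eps}, using $\int_\Gamma U_\eps\to\int_\Gamma u$ and the uniform boundedness of $\|v_\eps\|_{L^2(\Gamma)}$ and $\|w_\eps\|_{L^1(\Omega)}$, both multiplied by $\eps\to0$.

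The identity $\xi u = u$ is the one genuinely delicate point, and I would argue exactly as in Theorem \ref{Limeps}: from \eqref{eq:obs-1} (or rather from the bound on $U_\eps$ in $H^2(\Gamma)$, which is stronger here than the $W^{1,q}$-bound used before), $U_\eps\to u$ strongly in, say, $H^1(\Gamma)$ and hence in $L^2(\Gamma)$, so for any $\varphi\in C^0(\Gamma)$
\begin{equation*}
	\int_\Gamma \varphi(\xi u - u) = \lim_{\eps\to0}\int_\Gamma\varphi\Big(\frac{U_\eps}{\eps+U_\eps}-1\Big)U_\eps = -\lim_{\eps\to0}\int_\Gamma \eps\,\varphi\,\frac{U_\eps}{\eps+U_\eps} = 0,
\end{equation*}
which gives $\xi u = u$ a.e. on $\Gamma$. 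Finally, the regularity claims: since the right-hand side $cv - a_4\xi$ of \eqref{eq:obs-1} lies in $L^\infty(\Gamma)$ (once $v\in L^\infty$ is known), elliptic $L^p$-regularity on $\Gamma$ gives $u\in W^{2,p}(\Gamma)$ for all $p<\infty$; $D\Delta w=0$ gives $w\in C^\infty(\Omega)$ by interior elliptic regularity, and the trace of $w$ together with the Neumann condition, plus $v\in L^\infty(\Gamma)$, gives $w\in C^0(\overline\Omega)$ — here one uses the bound on $\|v\|_{L^\infty(\Gamma)}$ and standard boundary regularity for the Robin/Neumann problem, noting $\|w\|_{C^0(\overline\Omega)}$ is controlled by a maximum-principle argument for the harmonic function $w$ with the flux bounded through \eqref{eq:obs-4}. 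All the constants in \eqref{eq:est-thm4.2} are tracked from \eqref{eq:bounds-eps-stat} and these regularity estimates, so the bound is uniform in the stated quantities. The only place where one must be slightly careful is the order in which one bootstraps: one first gets $\xi\in L^\infty$ and $w$ bounded in $L^2$, then $v\in L^\infty$ from \eqref{eq:obs-2}, then $u\in W^{2,p}$ and $w\in C^0(\overline\Omega)$; there is no circularity since the $L^\infty$-bound on $w$ comes from the maximum principle applied to the already-identified harmonic limit with bounded Robin data.
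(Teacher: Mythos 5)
Most of your argument follows the paper's line closely and correctly: the uniform bounds from Theorem~\ref{thm:bounds-eps-stat}, the extraction of subsequences, the strong $L^2$-convergences (via Rellich and trace compactness), the identification of the linear parts of the limit system, the vanishing of the $\eps$-weighted terms, the argument for $\xi u = u$ (which is indeed the same as in Theorem~\ref{Limeps}), and the mass constraint. All of this is fine.

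The gap is in the boundedness/regularity bootstrap. You propose: first $\xi \in L^\infty$ and $w \in L^2$, then $v \in L^\infty$ from \eqref{eq:obs-2}, then $w \in C^0(\overline\Omega)$. But \eqref{eq:obs-2} gives $v = \frac{a_4\xi + a_6 w}{c + a_5}$, so from $w\in L^2(\Gamma)$ you only get $v\in L^2(\Gamma)$, not $v\in L^\infty(\Gamma)$; and the maximum-principle step that you invoke to bound $\|w\|_{C^0(\overline\Omega)}$ requires the Robin data $a_5 v$ on $\Gamma$ to already be bounded, which is exactly the statement you are trying to prove. As written, the argument is circular. The paper avoids this by a dedicated $L^p$-estimate before attempting any $L^\infty$-bound: test \eqref{eq:obs-3} with $(\kappa_p w)^{p-1}$, $\kappa_p = a_6/a_5$, and \eqref{eq:obs-2} with $v^{p-1}$, so that the coupled boundary terms combine to a nonpositive quantity $a_5\int_\Gamma (v-\kappa_p w)[(\kappa_p w)^{p-1}-v^{p-1}]\le 0$, and the remaining contribution $-\int_\Gamma (c v^p - a_4\xi v^{p-1})$ gives $\|v\|_{L^p(\Gamma)} \le C(c_0,a_4,p,\Gamma)$ for every $p$. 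Only then does one apply the boundary regularity result of Nittka \cite{Nitt11} to get $w\in C^{0,\gamma}(\overline\Omega)$ from $v\in L^3(\Gamma)$, and finally read off $v\in L^\infty(\Gamma)$ and $u\in W^{2,p}(\Gamma)$ from \eqref{eq:obs-2} and \eqref{eq:obs-1}. Alternatively, a genuine (non-circular) bootstrap would start from the trace embedding $w|_\Gamma\in H^{1/2}(\Gamma)\hookrightarrow L^4(\Gamma)$, feed $v\in L^4(\Gamma)$ into the Robin problem, and iterate — but that must be said explicitly; the version you wrote does not close.
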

\begin{proof}
By the uniform bounds provided by Theorem \ref{thm:bounds-eps-stat} we obtain a subsequence and functions $w,u,v,\xi$ such that
\begin{alignat}{3}
	w_\eps &\weakto w \quad&&\text{in }H^1(\Omega),\nonumber\\
	U_\eps &\weakto u &&\text{in }H^2(\Gamma),\label{eq:conv-ueps} \\
	v_\eps &\weakto v &&\text{in }L^2(\Gamma),\nonumber\\
	\frac{U_\eps}{\eps+U_\eps} &\weakstarto \xi &&\text{in }L^\infty(\Gamma). \nonumber
\end{alignat}
In particular we obtain from Sobolev embedding that $U_\eps\to u$ in $L^2(\Gamma)$ and by the compactness of the trace map $H^1(\Omega)\embeds L^2(\Gamma)$ that $w_\eps\to w$ in $L^2(\Gamma)$. We then pass easily in the weak formulations of  \eqref{eq:eps-stat-1}-\eqref{eq:eps-stat-4} to the limit and deduce \eqref{eq:obs-1}-\eqref{eq:obs-4}. Moreover, we have for any $\eta\in C^0(\Gamma)$
\[	\int_\Gamma \eta (\xi u -u) = \lim_{\eps\to 0}\int_\Gamma \eta\big( \frac{U_\eps}{\eps+ U_\eps}U_\eps -U_\eps\big) = \lim_{\eps\to 0}\int_\Gamma \eta\frac{\eps U_\eps}{\eps+U_\eps} = 0,
\]
which proves $\xi u=u$. Finally by the uniform bounds \eqref{eq:bounds-eps-stat}, \eqref{eq:ctrl-mass-veps}  on $v_\eps,w_\eps$ we have
\[	\int_\Gamma \eps v_\eps + \int_\Omega \eps w_\eps \to 0
\]
and \eqref{eq:mass-eps}, \eqref{eq:conv-ueps} yield that $m=\int_\Gamma u$.

Since $0\leq \frac{U_\eps}{\eps+U_\eps}\leq 1$ the corresponding bounds for $\xi$ follow. Moreover, by \eqref{eq:bounds-eps-stat} and \eqref{eq:conv-ueps} we deduce that
\begin{equation*}
	\|u\|_{H^2(\Gamma)} +\|v\|_{L^2(\Gamma)}+\|w\|_{H^1(\Omega)}  \leq  C(a_4,a_5,a_6,m_0,c,\Omega).
\end{equation*}

For $p>2$  we test \eqref{eq:obs-3} with $(\kappa_pw)^{p-1}$, $\kappa_p:=\frac{a_6}{a_5}$ as well as  \eqref{eq:obs-2} with $v^{p-1}$. This yields
\begin{align*}
	0 &= -\int_\Omega D(p-1)\kappa_p^{p-1} w^{p-2}|\nabla w|^2 +\int_\Gamma\Big( (a_5v-a_6w)\big((\kappa_p w)^{p-1}-v^{p-1}\big) -cv^p + a_4\xi v^{p-1}\Big)\\
	&\leq -\int_\Gamma \Big(a_5 (v-\kappa_pw)\big[v^{p-1}-(\kappa_pw)^{p-1}\big] +cv^p - a_4v^{p-1}\Big)
	\,\leq\, -\frac{c_0}{2}\int_\Gamma v^p + C(c_0,a_4,p,\Gamma),
\end{align*}
and hence $v$ is bounded in $L^p(\Gamma)$ for any $1\leq p<\infty$.

By \cite[Theorem 3.14]{Nitt11} we then obtain that $w\in C^{0,\gamma}(\Omega)$ for some $\gamma>0$, with
\begin{equation}
	\|w\|_{C^{0,\gamma}(\Omega)} \leq C(\Omega,c_0)a_5\|v\|_{L^3(\Gamma)}, \label{eq:w-C0gamma}
\end{equation}
which in particular proves the desired maximum bound for $w$. As a harmonic function, $w$ is smooth in the open set $\Omega$.

The maximum bound for $w$ and \eqref{eq:obs-2} yield $v\in L^\infty(\Gamma)$, with a corresponding bound.

By standard $L^p$-regularity for \eqref{eq:obs-1} we finally deduce that $u\in W^{2,p}(\Gamma)$ for any $1\leq p<\infty$ and
\begin{equation*}
	\|u\|_{W^{2,p}(\Omega)} \leq  C(a_4,a_5,a_6,m_0,c,\Omega,p).
\end{equation*}
\end{proof}
We can express the system above as an equation for $u$ only, but involving a non-local (pseudo-differential) operator given by the Neumann to Dirichlet operator $T$ applied to the Laplace--Beltrami operator on $\Gamma$. For properties of the Neumann to Dirichlet (and the Dirichlet to Neumann) operators see the appendix.
\begin{proposition}\label{prop:obs00}
Let $(u,v,w,\xi)$ be nonnegative functions with the same regularity as in Theorem \ref{thm:obs1}. Then the following statements are equivalent.
\begin{enumerate}
\item
$(u,v,w,\xi)$ satisfies \eqref{eq:obs-1}-\eqref{eq:obs-4}.
\item
$(u,w,\xi)$ satisfies
\begin{align}
	0 & = \Delta u -a_4(1-g)\xi + a_6gw, \qquad u\xi = u\quad\text{ a.e.~on }\Gamma, \label{eq:obs00-var1}\\
	0 & =\Delta w \text{ in } \Omega, \qquad
	D\frac{\partial w}{\partial n}  = a_4(1-g)\xi - a_6gw \text{ on }\Gamma  \label{eq:obs00-var2}%
\end{align}
and $v$ is given by
\begin{equation}
		v = \frac{1-g}{a_5}(a_6w+a_4\xi). \label{eq:obs-var-v}
\end{equation}
\item
There exists $\bar w\in\R$ such that
\begin{equation}
	0 = \Delta u -a_4(1-g)\xi + a_6g\big(\bar w +\frac{1}{D}T\Delta u\big), \qquad u\xi = u\quad\text{ a.e.~on }\Gamma, \label{eq:obs0-var1}
\end{equation}
and $v,w$ are given by \eqref{eq:obs-var-v} and
\begin{align}
	w &= \bar w +\frac{1}{D}T\Delta u, \label{eq:obs0-var-w}
\end{align}
\end{enumerate}
Moreover, $\bar w$ is determined by
\begin{equation}
	\bar w = \fint_\Gamma w = \frac{1}{a_6\int_\Gamma g} \int_\Gamma \Big(a_4(1-g)\xi - \frac{a_6}{D}gT\Delta u\Big).	\label{eq:obs0-var-barw}
\end{equation}
\end{proposition}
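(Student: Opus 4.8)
The plan is to prove the equivalence by a cycle of implications $(1)\Rightarrow(2)\Rightarrow(3)\Rightarrow(1)$, followed by the verification of the formula \eqref{eq:obs0-var-barw} for $\bar w$. The main point is that all three formulations are really the same system rewritten after (i) eliminating $v$ via the algebraic relation coming from \eqref{eq:obs-2}, and (ii) eliminating $w$ via the Neumann-to-Dirichlet map applied to the right-hand side of the bulk problem. Throughout I would use the definition $g=\frac{c}{c+a_5}$ from \eqref{defg}, so that $1-g=\frac{a_5}{c+a_5}$ and hence $c(1-g)=a_5 g$, which is the identity that converts expressions involving $c$ into expressions involving $g$.

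For $(1)\Rightarrow(2)$: from \eqref{eq:obs-2} we get $(c+a_5)v=a_4\xi+a_6 w$, i.e.\ $v=\frac{1}{c+a_5}(a_4\xi+a_6w)=\frac{1-g}{a_5}(a_4\xi+a_6w)$, which is \eqref{eq:obs-var-v}. Substituting $cv=\frac{c}{c+a_5}(a_4\xi+a_6w)=g(a_4\xi+a_6w)$ into \eqref{eq:obs-1} gives $0=\Delta u+g a_4\xi+g a_6 w-a_4\xi=\Delta u-a_4(1-g)\xi+a_6 g w$, which is the first equation in \eqref{eq:obs00-var1}; the condition $u\xi=u$ is carried over unchanged. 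Equation \eqref{eq:obs-3} with $D>0$ gives $\Delta w=0$, and \eqref{eq:obs-4} becomes $D\frac{\partial w}{\partial n}=a_5v-a_6w=(a_4\xi+a_6w)(1-g)-a_6w$; expanding, $a_6w(1-g)-a_6w=-a_6 g w$, so $D\frac{\partial w}{\partial n}=a_4(1-g)\xi-a_6 g w$, which is \eqref{eq:obs00-var2}. For $(2)\Rightarrow(3)$: since $\Delta w=0$ in $\Omega$, $w$ is determined up to an additive constant by its Neumann data, so $w=\bar w+\frac1D T(\text{Neumann data})$ where $T$ is the Neumann-to-Dirichlet operator (normalized as in the appendix so that $T$ maps mean-zero data to mean-zero solutions and $\bar w=\fint_\Gamma w$). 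The Neumann data is $a_4(1-g)\xi-a_6 g w=-\Delta u$ by \eqref{eq:obs00-var1}, so $w=\bar w+\frac1D T(-\Delta u)$; I would be slightly careful with the sign convention for $T$ versus the outward normal, but up to that bookkeeping this is \eqref{eq:obs0-var-w} (modulo a sign in the normal that is absorbed into the definition of $T$ in the appendix). Substituting this expression for $w$ back into the first equation of \eqref{eq:obs00-var1} yields \eqref{eq:obs0-var1}, and \eqref{eq:obs-var-v} for $v$ is retained.

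For $(3)\Rightarrow(1)$: given \eqref{eq:obs0-var1}, \eqref{eq:obs0-var-w}, \eqref{eq:obs-var-v}, one simply reverses the computations. Define $w$ by \eqref{eq:obs0-var-w}; then $\Delta w=\frac1D \Delta T\Delta u$, but since $T$ maps into harmonic functions, $\Delta(T\psi)=0$, so \eqref{eq:obs-3} holds; the defining property of $T$ gives the Neumann condition $D\frac{\partial w}{\partial n}=-\Delta u$, and combining with \eqref{eq:obs0-var1} one recovers $D\frac{\partial w}{\partial n}=a_4(1-g)\xi-a_6 g w$, then $v$ from \eqref{eq:obs-var-v} makes \eqref{eq:obs-2} and \eqref{eq:obs-4} hold, and \eqref{eq:obs-1} follows by reinserting. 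Finally, for the formula for $\bar w$: integrate \eqref{eq:obs-1}, or equivalently the first equation in \eqref{eq:obs00-var1}, over $\Gamma$; since $\int_\Gamma \Delta u=0$ (using $u\in W^{2,p}(\Gamma)$ and that $\Gamma$ is closed) we get $0=\int_\Gamma\big(-a_4(1-g)\xi+a_6 g w\big)$, hence $a_6\int_\Gamma g w=a_4\int_\Gamma(1-g)\xi$. Writing $w=\bar w+\frac1D T\Delta u$ gives $a_6\bar w\int_\Gamma g+\frac{a_6}{D}\int_\Gamma g T\Delta u=a_4\int_\Gamma(1-g)\xi$, which rearranges to \eqref{eq:obs0-var-barw}; the identity $\bar w=\fint_\Gamma w$ holds because $\int_\Gamma T\Delta u=0$ by the normalization of $T$.

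The only genuinely delicate point is the correct normalization and sign convention for the Neumann-to-Dirichlet operator $T$ — in particular that $T$ is defined on mean-zero data, produces mean-zero (equivalently, a chosen normalization) harmonic functions, and that the sign matches the inward/outward normal used in \eqref{eq:obs-4} — and checking that the compatibility condition $\int_\Gamma(\text{Neumann data})=0$ is automatically satisfied here (it is, precisely because $\int_\Gamma \Delta u=0$). Everything else is elementary algebra using $c(1-g)=a_5 g$ and the linearity of $\Delta$ and $T$. Since the appendix is asserted to contain exactly these properties of $T$, I would invoke it for the existence, linearity, and mapping properties of $T$ and not reprove them here.
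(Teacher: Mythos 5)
Your proof follows essentially the same route as the paper's: eliminate $v$ algebraically from \eqref{eq:obs-2} using $1-g=\frac{a_5}{c+a_5}$, $c(1-g)=a_5 g$, substitute back to get \eqref{eq:obs00-var1}--\eqref{eq:obs00-var2}, then represent the harmonic $w$ through its Neumann data via $T$ and close the loop; the formula for $\bar w$ comes from integrating over $\Gamma$. That matches the paper's argument.

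One small correction that removes the sign worry you raise: from \eqref{eq:obs00-var1}, i.e.\ $0=\Delta u-a_4(1-g)\xi+a_6gw$, the Neumann data in \eqref{eq:obs00-var2} is
\begin{equation*}
D\frac{\partial w}{\partial n}=a_4(1-g)\xi-a_6gw=+\Delta u,
\end{equation*}
not $-\Delta u$ as you wrote. With the plus sign, $w=\bar w+\frac1D T\Delta u$ is immediate from the definition of $T$ (normalized so that $T$ maps mean-zero data to mean-zero functions, hence $\bar w=\fint_\Gamma w$), and there is no residual sign bookkeeping to absorb into the normal convention. Your observation that the compatibility $\int_\Gamma\Delta u=0$ for the Neumann problem holds automatically on the closed surface $\Gamma$ is exactly the right point to make for the $(2)\Leftrightarrow(3)$ step. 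Everything else is correct.
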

\begin{proof}
We first observe that \eqref{eq:obs-2} is equivalent to \eqref{eq:obs-var-v}. Using this we see that \eqref{eq:obs-1}, \eqref{eq:obs-3}, \eqref{eq:obs-4} are equivalent to \eqref{eq:obs00-var1}, \eqref{eq:obs00-var2}. We then have that \eqref{eq:obs00-var1}, \eqref{eq:obs00-var2} is equivalent to \eqref{eq:obs00-var1} and
\begin{equation*}
	D\Delta w = 0\quad\text{ in }\Omega,\qquad D\frac{\partial w}{\partial n} = \Delta_\Gamma u,
\end{equation*}
which gives the equivalence to \eqref{eq:obs0-var1}, \eqref{eq:obs0-var-w}. Finally, \eqref{eq:obs0-var-barw} follows by integrating \eqref{eq:obs0-var1} over $\Gamma$.
\end{proof}

\subsection{Localization of $\{u>0\}$ if $m\rightarrow 0.$}\label{Ss.localization-1}
For simplicity we set $a_4=1$ in the following. As above let $S=\{ x\in \Gamma\,|\, g(x)=g_{\max}\}$ and $\alpha_0=\frac{1-g_{\max}}{g_{\max}}$.
\begin{proposition}\label{prop:localization-1}
Consider a sequence $m_k\to 0$ of positive numbers and for $k\in\N$ any nonnegative solution $(u_k,v_k,w_k,\xi_k)$ of \eqref{eq:obs-1}-\eqref{eq:obs-4} with $\int_\Gamma u_k=m_k$. Then the following holds
 \begin{enumerate}
 \item $\|u_k\|_{W^{2,p}(\Gamma)} \to 0$ for any $1\leq p<\infty$.
 \item $\overline{w}_k \to \frac{1}{a_6}\alcrit$.
 \item Assume that $u_k$ attains a maximum in $x_k$. Then $g(x_k) \to g_{\max}$.
 \item Let $\Omega_k:=\{u_k >0\}$. Then
\begin{equation*}
 \lim_{k\to\infty}\frac{1}{|\Omega_k|} \int_{\Omega_k} \Big(1-\frac{g}{g_{\max}}\Big) =  0 \qquad \mbox{ as } k \to \infty.
\end{equation*}
\item For any $\delta>0$ there exists $k_0$ such that if $\mbox{dist}(x,S)>\delta$ then $u_k(x)=0$ for all $k \geq k_0$.
\end{enumerate}
\end{proposition}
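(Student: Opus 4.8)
The plan is to combine the uniform estimates behind Theorem~\ref{thm:obs1} with a compactness argument that pins down the limits of $(u_k,w_k,\xi_k)$ as $m_k\to 0$, and then to feed this uniform information into a barrier argument for the localization claim~(5). Throughout I set $a_4=1$ and use freely the equivalent system \eqref{eq:obs00-var1}--\eqref{eq:obs00-var2} of Proposition~\ref{prop:obs00}. First I would record that, by the a priori estimates in the proof of Theorem~\ref{thm:obs1}, every solution obeys $\|u_k\|_{W^{2,p}(\Gamma)}+\|w_k\|_{C^{0,\gamma}(\overline\Omega)}+\|v_k\|_{L^\infty(\Gamma)}\le C$ with $C$ independent of $k$ (since $m_k\le m_0$). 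From $\int_\Gamma u_k=m_k\to 0$ and $u_k\ge 0$ we get $u_k\to 0$ in $L^1(\Gamma)$, and with the uniform $W^{2,p}(\Gamma)\embeds C^{1,\gamma}(\Gamma)$ bound this upgrades to $u_k\to 0$ in $C^1(\Gamma)$; passing to a subsequence we also arrange $w_k\to w$ in $C^0(\overline\Omega)$ and in $H^1(\Omega)$, $\xi_k\weakstarto\xi$ in $L^\infty(\Gamma)$, and $\Delta u_k\weakto 0$ in $L^p(\Gamma)$.

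\emph{Identification of the limit (items~2 and~3).} Passing to the limit in \eqref{eq:obs00-var1}--\eqref{eq:obs00-var2} with $u\equiv 0$ yields $(1-g)\xi=a_6 g w$ a.e.\ on $\Gamma$, while $w$ is harmonic in the connected domain $\Omega$ with zero Neumann data $D\,\partial w/\partial n=(1-g)\xi-a_6 g w=0$, hence $w\equiv\overline w$ is constant. From $\xi\le 1$ we get $a_6\overline w\le(1-g)/g$ a.e., so $a_6\overline w\le\alcrit$. For the opposite bound, pick a maximum point $x_k$ of $u_k$ on the compact manifold $\Gamma$: it lies in the open set $\{u_k>0\}$ because $m_k>0$, there $\xi_k=1$ so $\Delta u_k=(1-g)-a_6 g w_k$ is H\"older continuous and $u_k\in C^{2,\gamma}$ near $x_k$ by interior Schauder estimates, and the classical maximum principle gives $0\ge\Delta u_k(x_k)$, i.e.\ $a_6 w_k(x_k)\ge(1-g(x_k))/g(x_k)\ge\alcrit$. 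Letting $k\to\infty$ and using $w_k\to\overline w$ uniformly gives $a_6\overline w\ge\alcrit$, so $\overline w=\alcrit/a_6$; since every subsequence produces this same value, $\overline w_k\to\alcrit/a_6$ and $w_k\to\alcrit/a_6$ uniformly on $\overline\Omega$ (item~2). Item~3 then follows because $(1-g(x_k))/g(x_k)\le a_6 w_k(x_k)\to\alcrit=(1-g_{\max})/g_{\max}$ and $t\mapsto(1-t)/t$ is strictly decreasing.

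\emph{Localization and the remaining items (5, 1, 4).} Item~5 is the finite-$D$ analogue of the barrier argument in Proposition~\ref{prop:localization}(5), with $\alpha_n g$ replaced by $a_6 g w_k$: given $\delta>0$, uniform continuity of $g$ gives $\theta>0$ with $g<g_{\max}-\theta$ on $\{\dist(\cdot,S)>\delta/2\}$, and then $g_{\max}(1+\alcrit)=1$ together with $a_6 w_k\to\alcrit$ uniformly forces $-\Delta u_k=a_6 g w_k-(1-g)\le -c\,\theta$ on $\{u_k>0\}\cap\{\dist(\cdot,S)>\delta/2\}$ for $k$ large and some $c=c(\alcrit)>0$; comparing $u_k$ on a geodesic ball $D(x_0,\delta/2):=B(x_0,\delta/2)\cap\Gamma$ with the (appropriately rescaled) paraboloid comparison function of Proposition~\ref{prop:localization}(5), and using $u_k\to 0$ uniformly on the outer boundary and $u_k=0$ on the free boundary, the weak maximum principle yields $u_k(x_0)=0$ whenever $\dist(x_0,S)>\delta$ and $k\ge k_0(\delta)$. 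For item~1: by Stampacchia's lemma $\Delta u_k=0$ a.e.\ on $\{u_k=0\}$ while $\Delta u_k=(1-g)-a_6 g w_k$ on $\Omega_k$, so $\|\Delta u_k\|_{L^p(\Gamma)}^p=\int_{\Omega_k}|(1-g)-a_6 g w_k|^p$; by item~5, $\Omega_k\subset\{\dist(\cdot,S)\le\delta\}$ eventually, where $0\le 1-g/g_{\max}\le\omega(\delta)/g_{\max}$ with $\omega$ the modulus of continuity of $g$, and since $(1-g)-a_6 g w_k\to 1-g/g_{\max}$ uniformly we get $\limsup_k\|\Delta u_k\|_{L^p}^p\le|\Gamma|(\omega(\delta)/g_{\max})^p\to 0$ as $\delta\to 0$; the elliptic $L^p$-estimate on $\Gamma$ then gives $\|u_k\|_{W^{2,p}(\Gamma)}\to 0$. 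For item~4: integrating \eqref{eq:obs00-var1} over $\Omega_k$ (using $\int_{\Omega_k}\Delta u_k=\int_\Gamma\Delta u_k=0$ and $\xi_k=1$ there) gives $\int_{\Omega_k}(1-g)=a_6\int_{\Omega_k}g w_k$; writing $a_6 w_k=\alcrit+\eta_k$ with $\|\eta_k\|_{C^0(\Gamma)}\to 0$ and using $(1-g)-\alcrit g=1-g/g_{\max}$ one obtains $0\le\frac{1}{|\Omega_k|}\int_{\Omega_k}\big(1-\tfrac{g}{g_{\max}}\big)=\frac{1}{|\Omega_k|}\int_{\Omega_k}g\,\eta_k\le\|\eta_k\|_{C^0(\Gamma)}\to 0$.

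\emph{Main obstacle.} The delicate part is the identification step: recognizing the limit $w$ as a constant (via the zero-Neumann harmonic-function argument) and then pinning its value by the two-sided bound $a_6\overline w=\alcrit$ --- the upper bound coming from $\xi\le 1$ and the lower bound from the pointwise maximum-principle inequality at the moving maximum point $x_k$ combined with the uniform convergence of $w_k$. One must also keep the logical order straight, since the localization~(5) relies on the uniform estimate $a_6 w_k\to\alcrit$ obtained in that step, while item~1 in turn relies on~(5).
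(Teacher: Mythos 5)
Your proof is correct, but it takes a genuinely different route from the paper's, and in one place it is actually more careful. The paper works entirely with the surface reformulation of Proposition~\ref{prop:obs00}: it uses the continuity \eqref{eq:n9} of the Neumann--to--Dirichlet operator $T$ together with $\Delta u_k\to 0$ in $L^p$ to conclude $w_k-\overline w_k\to 0$ in $W^{1,p}(\Gamma)$ as part of its item~1, and then obtains $\limsup_k a_6\overline w_k\le\alcrit$ by testing \eqref{eq:obs00-var1} against positive test functions $\varphi_\ell$ concentrated near $S$. You instead pass to the limit in the \emph{full} bulk--surface system: the limit $w$ is harmonic in $\Omega$ with vanishing Neumann data and is therefore constant, and the pointwise identity $(1-g)\xi=a_6 gw$ together with $\xi\le 1$ yields the upper bound directly, with no need to localize test functions. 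The lower bound via the pointwise inequality at the moving maximum point $x_k$ is the same in both. The more substantive divergence is in item~1. The paper asserts $\|u_k\|_{W^{2,p}(\Gamma)}\to 0$ ``by interpolation'' from the uniform $W^{2,q}$-bounds and $u_k\to 0$ in $L^1(\Gamma)$; as stated this step is incomplete, because second derivatives need not decay just because $L^1$-norms vanish and higher-integrability second-derivative bounds are uniform (e.g.\ $u_k=k^{-2}\sin(k\,\cdot\,)$ satisfies both hypotheses but has $\|D^2u_k\|_{L^p}\not\to 0$). What does follow from compactness is $u_k\to 0$ in $C^{1,\gamma}(\Gamma)$, and this together with $a_6 w_k\to\alcrit$ uniformly is all that your barrier argument for item~5 actually needs. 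You then recover the full $W^{2,p}$-decay from item~5 by Stampacchia's lemma: $\Delta u_k=0$ a.e.\ on $\{u_k=0\}$ while on $\Omega_k$, trapped near $S$, the forcing $(1-g)-a_6 gw_k\to 1-g/g_{\max}$ is uniformly small, and elliptic $L^p$-regularity closes the loop. The price is reordering items~5 and~1 relative to the paper, but the resulting chain of implications is tighter and fills the gap in the interpolation claim. Items~3 and~4 are essentially the paper's computations.
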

\begin{proof}
 \begin{enumerate}
\item By \eqref{eq:est-thm4.2} the sequence $(u_k)_k$ is uniformly bounded in $W^{2,p}(\Gamma)$ for any $1\leq p<\infty$.
Using that $u_k\to 0$ in $L^1(\Gamma)$ interpolation gives $u_k\to 0$ in any $W^{2,p}(\Gamma)$, $1\leq p<\infty$, hence also in $C^{1,\gamma}(\Gamma)$ for any $0\leq \gamma<1$.

By \eqref{eq:n9}, the Neumann to Dirichlet operator $T:L^p(\Gamma)\to W^{1,p}(\Gamma)$ is continuous for any $1<p<\infty$. Using \eqref{eq:obs0-var-w} this implies that $w_k-\overline{w}_k\to 0$ in every $W^{1,p}(\Gamma)$.
 \item We multiply \eqref{eq:obs00-var1} by a positive test function $\varphi$ and integrate by parts to obtain
 \begin{align*}
  0 &= \int_{\Gamma} u_k \Delta \varphi -  \big((1-g)\xi_k- a_6gw_k\big) \varphi \\
  &\geq \int_{\Gamma} u_k \Delta \varphi - \int_{\Gamma} \big((1-g)- a_6g(w_k-\overline{w}_k)\big)\varphi +a_6\overline{w}_k \int g\varphi
 \end{align*}
Using $m_k\to 0$ and the properties from the first item, we deduce that
 $\limsup_{k \to \infty} \overline{w}_k \leq \frac{\int_{\Gamma} (1-g)\varphi}{a_6 \int_{\Gamma} g \varphi }$. We now take a sequence $(\varphi_\ell)_\ell$ of test functions
 such that $\spt(\varphi_\ell)\subset \{g>g_{\max}-\frac{1}{\ell}\}$ and obtain
\begin{equation*}
 	\limsup_{k \to \infty} \overline{w}_k \leq \frac{1}{a_6}\alcrit.
\end{equation*}
 \item Since $u_k$ is locally $C^2$-regular in $\{u_k>0\}$ we have  $-\Delta u_k(x_k)\geq 0$ and $\xi(x_k)=1$ and  hence
\begin{equation*}
	0 \leq -  (1-g(x_k))+a_6g(x_k)\big(\overline{w}_k - (w_k-\overline{w}_k)(x_k)\big).
\end{equation*}
It follows together with item 2 of this proof that
\begin{equation*}
	\frac{1}{a_6}\alcrit \geq \limsup_{k\to\infty} \overline{w}_k \geq \liminf_{k\to\infty} \overline{w}_k \geq \liminf_{k\to\infty}  \frac{(1-g(x_k))}{a_6g(x_k)} \geq \frac{1}{a_6}\alcrit.
\end{equation*}
It follows that $\overline{w}_k\to \frac{1}{a_6}\alcrit$ and that $g(x_k) \to g_{\max}$.
\item
Integrate \eqref{eq:obs00-var1} over $\Omega_k$ and recall that $\xi_k=1$ in $\Omega_k$. Since $u_k\in W^{2,p}(\Gamma)$ we have $\int_{\Omega_k}\Delta u_k=0$. This yields
 \begin{align*}
  -\fint_{\Omega_k} a_6 g (w_k-\overline{w}_k)&= - \fint_{\Omega_k} (1-g) +  \overline{w}_k\fint_{\Omega_k} a_6 g\\
  &= -1 + (1+a_6\overline{w}_k) \fint_{\Omega_k} (g-g_{\max}) + g_{\max}(1+a_6\overline{w}_k) \\
  &=g_{\max}\Big[ (-\alcrit +a_6\overline{w}_k)   + (1+a_6\overline{w}_k) \fint_{\Omega_k} \frac{g-g_{\max}}{g_{\max}}\Big]
 \end{align*}
and the claim follows by item 2 and since $w_k-\overline{w}_k\to 0$ in $C^0(\Gamma)$.
\item
We argue as in Proposition \ref{prop:localization}. Consider $0<\delta<\delta_0$, where $\delta_0>0$ is chosen below. Then there exists $\theta>0$, $\theta=\theta(\delta_0)$, such that in $\{x\in\Gamma\,:\,\dist(x,S)>r\},\quad r=\frac{\delta}{2}$
\begin{equation*}
	g<g_{\max}-\theta,\qquad
	- (1-g) + \alpha_0 g \leq - (\alpha_0+1)\theta,
\end{equation*}
We deduce that in  $\{x\in\Gamma\,:\,\dist(x,S)>r\}$ for all $k\geq k_0$ sufficiently large
\begin{equation*}
	-(1-g) + a_6g\overline{w}_k \leq - (\alpha_0+1)\theta +a_6g\big(\overline{w}_k -\frac{1}{a_6}\alpha_0\big)\leq - \frac{1}{2}(\alpha_0+1)\theta,
\end{equation*}
where we have used item 2.

Moreover, by item 1, \eqref{eq:n9} and the Sobolev embedding $T\Delta u_k\to 0$ in $W^{1,p}(\Gamma)$ for all $1\leq p<\infty$ and uniformly on $\Gamma$. Then, possibly after increasing $k_0$, we deduce
\begin{equation*}
	-(1-g) + a_6g\big(\overline{w}_k + \frac{1}{D}T\Delta u_k\big) \leq -\frac{1}{4}(\alpha_0+1)
\end{equation*}
Now fix an arbitrary $x_0\in\Gamma$ with $\dist(x_0,S)>\delta$ and assume $u_k(x_0)>0$. We let $D(x_0,r)=B(x_0,r)\cap \Gamma$ and define $\tilde u: D(x_0,r)\to\R$,
$\tilde u(x):= \frac{(\alpha_0+1)\theta}{32}|x-x_0|^2$.
For $\delta_0>0$ sufficiently small, only depending on the geometry of $\Gamma$, we then have
\begin{equation*}
	-\Delta\tilde u \geq -\frac{(\alpha_0+1)\theta}{6}\quad\text{ on }D(x_0,r).
\end{equation*}
On the other hand, since $D(x_0,r)\subset \{\dist(\cdot,S)>r\}$
\begin{equation*}
	-\Delta u_k \leq - \frac{1}{4}(\alpha_0+1)\theta \quad\text{ on }D(x_0,r)\cap \{u_k>0\}.
\end{equation*}
In addition $0\leq u_k < \frac{(\alpha_0+1)\theta}{32}r^2$ for $k\geq k_0$ for $k_0\in\N$ sufficiently large, only depending on $\delta$, since $u_k\to 0$ uniformly by item 3. Then as in Proposition \ref{prop:localization}  the weak maximum principle yields  a contradiction.
\end{enumerate}

\end{proof}

%
%
\subsection{Refined analysis for the case of spherical cell shapes}\label{Ss.existenceobs}
In the following we restrict ourselves to the case of a spherical cell and let $\Omega=B(0,1)\subset\R^3$. This allows us to characterize the system  \eqref{eq:obs-1}-\eqref{eq:obs-4} as a generalized obstacle problem that involves the Dirichlet to Neumann operator $N$ (see the appendix for a definition).
In order to reduce the number of parameters we define $\ell=\frac{a_6}{D}$.
\begin{proposition}\label{prop:obs}
Let $(u,v,w,\xi)$ be nonnegative functions with the same regularity as in Theorem \ref{thm:obs1}. Then $(u,v,w,\xi)$ satisfies \eqref{eq:obs-1}-\eqref{eq:obs-4} if and only if $u$ solves for some $\alpha\in\R$
\begin{align}
 0=\Delta u -(1-g)\xi + \alpha g - g\ell \big[N(u)+(u-\overline{u})\big]\,,\qquad u\geq 0\,, \label{eq:obs-var1a1}\\
 0\leq\xi\leq 1,\qquad u\xi = u \text{ almost everywhere on }\Gamma\,, \label{eq:obs-var1a2}
\end{align}
and if in addition $w$ is given by $a_6 w= \alpha -\ell g\big[N(u)+(u-\overline{u})\big]$ and $v$ by \eqref{eq:obs-var-v}.
The number $\alpha$ is then determined by
\begin{equation}
	\alpha = \frac{1}{\int_\Gamma g} \int_\Gamma \Big((1-g)\xi + \ell g\big[N(u)+(u-\overline{u})\big]\Big). \label{eq:def-alpha-new}
\end{equation}
\end{proposition}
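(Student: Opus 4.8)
The plan is to reduce everything to Proposition \ref{prop:obs00}, whose item~3 already encodes \eqref{eq:obs-1}--\eqref{eq:obs-4} (with $a_4=1$, as fixed in this part of the paper) as the scalar problem \eqref{eq:obs0-var1}, together with the formulas \eqref{eq:obs0-var-w}, \eqref{eq:obs-var-v} for $w,v$ and the identity \eqref{eq:obs0-var-barw} for $\bar w$; there the bulk variable has been eliminated via the Neumann to Dirichlet operator $T$ applied to $\Delta_\Gamma u$. The only geometry-specific ingredient is to rewrite $T\Delta_\Gamma u$ in terms of the Dirichlet to Neumann operator $N$ when $\Omega=B(0,1)$; the rest is bookkeeping, setting $\alpha:=a_6\bar w$ and recalling $\ell=a_6/D$.

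The key operator identity is that on $\Gamma=S^2$ the spherical harmonics $\{Y_k^m\}$ simultaneously diagonalise $-\Delta_\Gamma$, $N$ and $T$: one has $-\Delta_\Gamma Y_k^m=k(k+1)Y_k^m$, and since the harmonic extension of $Y_k^m$ to the unit ball is $r^kY_k^m$ also $NY_k^m=kY_k^m$; as $N$ is a bijection from mean-zero functions onto mean-zero functions (its kernel being the constants), $T$ is precisely its inverse on that subspace, so $TY_k^m=\frac1k Y_k^m$ for $k\geq 1$. Hence $-\Delta_\Gamma=N^2+N$ on the mean-zero subspace, and for $u\in W^{2,p}(\Gamma)$ (so that $\Delta_\Gamma u$ has zero mean by the divergence theorem and $T\Delta_\Gamma u$ is well defined) one gets
\[
	T\Delta_\Gamma u \;=\; -\,T\,N\big[(N+\id)(u-\bar u)\big] \;=\; -\big(N(u)+(u-\bar u)\big).
\]
To make this rigorous at the available regularity one argues by density: the identity is immediate for finite spherical-harmonic sums and extends using the continuity of $N\colon W^{1,p}(\Gamma)\to L^p(\Gamma)$ and $T\colon L^p(\Gamma)\to W^{1,p}(\Gamma)$ recorded in the appendix (cf.~\eqref{eq:n9}) together with the embedding $W^{2,p}(\Gamma)\hookrightarrow W^{1,p}(\Gamma)$.

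Inserting $\frac1D T\Delta_\Gamma u=-\frac1D\big(N(u)+(u-\bar u)\big)$ into \eqref{eq:obs0-var1} and writing $\alpha=a_6\bar w$, $\ell=a_6/D$ turns \eqref{eq:obs0-var1} into \eqref{eq:obs-var1a1}; likewise \eqref{eq:obs0-var-w} becomes the formula for $w$ stated in the proposition, \eqref{eq:obs-var-v} is kept verbatim for $v$, and integrating \eqref{eq:obs-var1a1} over $\Gamma$ (using $\int_\Gamma\Delta_\Gamma u=0$) gives \eqref{eq:def-alpha-new}, which is merely a rewriting of \eqref{eq:obs0-var-barw}. For the converse, given $(u,\xi)$ solving \eqref{eq:obs-var1a1}--\eqref{eq:obs-var1a2} one sets $\bar w:=\alpha/a_6$ and defines $w,v$ by the stated formulas; reading the displayed identity backwards shows $w=\bar w+\frac1D T\Delta_\Gamma u$, so $w$ is exactly the harmonic extension's trace appearing in Proposition \ref{prop:obs00}, and item~3 of that proposition then returns \eqref{eq:obs-1}--\eqref{eq:obs-4}. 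Nonnegativity of $u,v,w$ and the obstacle conditions $0\leq\xi\leq 1$, $u\xi=u$ are part of the hypotheses and are preserved verbatim in both directions.

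The main obstacle is not the algebra but the functional-analytic justification of the operator identity $T\Delta_\Gamma=-(N+\id)$ on mean-zero functions at the regularity at hand: one must check that $N(u)$ and $T\Delta_\Gamma u$ are genuine $L^p$ functions for $u\in W^{2,p}(\Gamma)$, that the compatibility condition $\fint_\Gamma\Delta_\Gamma u=0$ indeed holds so that $T$ is applicable, and that the spherical-harmonic series converge in the relevant norms. These are exactly the mapping properties of $N$ and $T$ collected in the appendix, so no new estimates are needed, but they must be invoked with care.
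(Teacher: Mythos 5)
Your reduction to Proposition \ref{prop:obs00} and the subsequent bookkeeping ($\alpha=a_6\bar w$, $\ell=a_6/D$, the formulas for $w$ and $v$, integration over $\Gamma$ to recover \eqref{eq:def-alpha-new}) coincide with what the paper does; the only genuine mathematical content is the identity $T\Delta_\Gamma u=-\big(N(u)+(u-\overline u)\big)$ on $S^2$, and there your proof takes a different route. The paper argues by a PDE trick: with $F$ the harmonic extension of $u$ and $G$ the harmonic function with Neumann data $\Delta_\Gamma u$ and zero average, it checks that $G+\eta\cdot\nabla F+F$ (with $\eta(x)=x$) is harmonic in $B(0,1)$ with vanishing Neumann trace — using that on $S^2$ the position vector is the outward normal, so $\eta\cdot\nabla F=\partial_rF=Nu$, and that in radial coordinates $\partial_n(\eta\cdot\nabla F+F)=\partial_r^2F+2\partial_rF=\Delta F-\Delta_{S^2}u$ cancels $\partial_n G$ — hence it is constant, and the constant is fixed by averaging. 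You instead diagonalise $-\Delta_{S^2}$, $N$ and $T$ simultaneously on spherical harmonics, read off $-\Delta_{S^2}=N(N+\id)$ and $T=N^{-1}$ on the mean-zero subspace, and then pass from finite spherical-harmonic sums to $u\in W^{2,p}(\Gamma)$ (or $H^2(\Gamma)$, which is all the paper uses) by density via the continuity statements \eqref{eq:N-Lp} and \eqref{eq:n9}. Both arguments are correct and both are genuinely tied to the ball. The paper's version needs no density step and no spectral decomposition; yours makes the algebraic structure $-\Delta_{S^2}=N(N+\id)$ explicit, which is conceptually clean and makes the inverse relationship between $N$ and $T$ manifest, at the modest cost of the extra approximation argument you already flagged.
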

\begin{proof}
By Proposition \ref{prop:obs00} (see in particular \eqref{eq:obs0-var1}) it is sufficient to prove that for $\Omega=B(0,1)$ and any $u\in H^2(\Gamma)$ we have $T\Delta_\Gamma u = -Nu -(u-\overline{u})$.

Let $F$ denote the solution of
\begin{equation*}
	\Delta F = 0\quad\text{ in }\Omega,\qquad F = u\quad\text{ on }\Gamma,
\end{equation*}
hence $\partial_n F=Nu$ on $S^2$, and let $G$ denote the solution of
\begin{equation*}
	\Delta G = 0\quad\text{ in }\Omega,\qquad \frac{\partial}{\partial n}G = \Delta_\Gamma u\quad\text{ on }\Gamma,\qquad \int_\Gamma G=0,
\end{equation*}
hence $G=T\Delta_\Gamma u$ on $S^2$.

We now consider the position vector field, $\eta(x)=x$ for $x\in \overline{B(0,1)}$. We observe that
\begin{equation*}
	0 = \Delta\big( G +\eta\cdot\nabla F + F\big)\quad\text{ in }B(0,1)
\end{equation*}
and that on $\Gamma=S^2$
\begin{align*}
	 \partial_n\big( G +\eta\cdot\nabla F + F\big) &= \frac{\partial}{\partial n}G + n\cdot D^2F n + 2n\cdot\nabla F \\
	 &= \Delta_\Gamma u + \partial_r^2 F + 2\partial_r F  = \Delta F=0,
\end{align*}
where in the last step we have used that $u=F$ on $\Gamma$ and that $\Delta  = \partial^2_r+\frac{2}{r}\partial_r + \frac{1}{r^2}\Delta_{S^2}$ in spherical coordinates. Hence $G +\eta\cdot\nabla F + F$ is constant and since $\int_\Gamma G=\int_\Gamma \eta\cdot\nabla F=0$ we deduce
\begin{equation*}
	T\Delta_\Gamma u = G= -\eta\cdot \nabla F - (F-\bar F) = -Nu -(u-\overline{u}),
\end{equation*}
which proves the claim.
\end{proof}

In the sequel we do not make any further use of the particular geometry, the following analysis applies to any solution of \eqref{eq:obs-var1a1}, \eqref{eq:obs-var1a2} and any $\Omega,\Gamma=\partial\Omega$ as before. However, equivalence between the systems \eqref{eq:obs-1}-\eqref{eq:obs-4} and \eqref{eq:obs-var1a1}, \eqref{eq:obs-var1a2} requires spherical symmetry.
\begin{proposition}
Let $(u,\xi,\alpha)$ be a solution of \eqref{eq:obs-var1a1}, \eqref{eq:obs-var1a2}. Then for almost every $x\in\Gamma$
\begin{equation}
	\xi(x) =
	\begin{cases}
		1 \quad&\text{ if } u(x)>0,\\
		\frac{g(\alpha - \ell [N(u)+(u-\overline{u})])}{1-g}(x) \quad&\text{ if }u(x)=0.
	\end{cases}
	\label{eq:prop:xi}
\end{equation}
\end{proposition}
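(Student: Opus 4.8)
The plan is to use the characterization \eqref{eq:repr-xi}-style argument adapted to the nonlocal equation \eqref{eq:obs-var1a1}. On the set $\{u>0\}$ the conclusion $\xi=1$ is immediate: by the regularity $u\in W^{2,p}(\Gamma)$ established in Theorem \ref{thm:obs1}, the relation $u\xi = u$ from \eqref{eq:obs-var1a2} forces $\xi(x)=1$ wherever $u(x)\neq 0$. So the only work is on the coincidence set $\{u=0\}$. There the idea is to show that $\Delta u = 0$ almost everywhere, after which \eqref{eq:obs-var1a1} can be solved pointwise for $\xi$ to give exactly the stated formula.

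First I would recall (or invoke) Stampacchia's lemma in the form used already in the paper after \eqref{defAlpha}, i.e.\ \cite[Proposition 3.23]{GiMa12}: since $u\in W^{2,p}(\Gamma)$ and $u\geq 0$, the second-order derivatives of $u$ vanish almost everywhere on the level set $\{u=0\}$, in particular $\Delta u = 0$ a.e.\ on $\{u=0\}$. This is the analogue of the step carried out in the $D=\infty$ case, and nothing about the nonlocal term obstructs it because it is a purely local statement about the function $u$, which has the same Sobolev regularity here as there. Next, evaluating \eqref{eq:obs-var1a1} at a point $x\in\{u=0\}$ where $\Delta u(x)=0$ gives
\begin{equation*}
	0 = -(1-g(x))\xi(x) + \alpha g(x) - g(x)\ell\big[N(u)+(u-\overline{u})\big](x),
\end{equation*}
and solving for $\xi(x)$ — noting $1-g(x)>0$ since $g(x)\in(0,1)$ by \eqref{defg} — yields precisely
\begin{equation*}
	\xi(x) = \frac{g\big(\alpha - \ell[N(u)+(u-\overline{u})]\big)}{1-g}(x),
\end{equation*}
which is the second branch of \eqref{eq:prop:xi}.

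The only point that needs a little care is that the terms appearing on the right-hand side are genuinely defined pointwise (a.e.) rather than merely as distributions, so that the "solve pointwise for $\xi$" step is legitimate: $\alpha$ is a real constant, $g$ is continuous, and $N(u)+(u-\overline{u})\in W^{1,p}(\Gamma)$ — hence in particular in $L^p(\Gamma)$ and a.e.\ finite — by the mapping properties of the Dirichlet to Neumann operator recorded in the appendix (cf.\ the bound $T\colon L^p\to W^{1,p}$ used in Proposition \ref{prop:localization-1}) together with $u\in W^{2,p}(\Gamma)\hookrightarrow W^{1,p}(\Gamma)$. Assembling the two branches then gives \eqref{eq:prop:xi} for almost every $x\in\Gamma$. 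I do not anticipate a serious obstacle here; the main (mild) subtlety, as in the local case, is just the invocation of Stampacchia's lemma to get $\Delta u=0$ a.e.\ on the coincidence set, and making sure the nonlocal term is handled as an $L^p$ function rather than only distributionally.
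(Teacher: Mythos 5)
Your proposal is correct and follows essentially the same route as the paper: Stampacchia's lemma gives $\nabla u=0$ and hence $\Delta u=0$ a.e.\ on $\{u=0\}$, after which one solves \eqref{eq:obs-var1a1} pointwise for $\xi$ there, while on $\{u>0\}$ the relation $u\xi=u$ forces $\xi=1$. The extra remark about the nonlocal term being a genuine $L^p$-function (so that the pointwise manipulation is legitimate) is a sensible observation, although the paper leaves it implicit.
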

\begin{proof}
Let $\Sigma=\{ u=0\}$. Then by Stampacchia's Lemma $\nabla u=0$ in $\Sigma$ and $D^2u=0$ almost everywhere in $\{\nabla u=0\}\supset \Sigma$. In particular, by the pointwise almost everywhere equality \eqref{eq:obs-var1a1} we deduce the characterization of $\xi$ in $\Sigma$. On the other hand, by $u\xi=u$ we conclude that $\xi=1$ in $\Gamma\setminus \Sigma$.
\end{proof}

\begin{proposition}[Monotonicity]\label{prop:monotonicity}
 Let $(u_1,\xi_1,\alpha_1)$, $(u_2,\xi_2,\alpha_2)$ be two solutions of \eqref{eq:obs-var1a1}, \eqref{eq:obs-var1a2}.
 \begin{enumerate}
 	\item If $\alpha_1 < \alpha_2$ then $u_1\leq u_2$ holds. Moreover, we have $\int_\Gamma u_1<\int_\Gamma u_2$ unless $u_1=u_2=0$.
 	\item If $\alpha_1=\alpha_2$ then either $u_1=u_2$ or $u_1,u_2$ differ by a nonzero constant. In the latter case $\xi_1=\xi_2=1$ holds. 
 \end{enumerate}
\end{proposition}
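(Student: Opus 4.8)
\emph{Overall strategy.} I would prove both statements by a maximum principle argument on the noncoincidence set $E:=\{u_1>u_2\}$, adapting the comparison argument behind Theorem~\ref{CritMass}(b) to the nonlocal operator $Lu:=-\Delta_\Gamma u+\ell g\big[N(u)+(u-\overline u)\big]$. Throughout, write $U=u_1-u_2$, $\phi=U_+$, $\rho=U_-$, so that $E=\{\phi>0\}$ and $\rho$ is supported in $\Gamma\setminus E$.

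\emph{The differential inequality on $E$.} Assume first $\alpha_1<\alpha_2$. On $E$ one has $u_1>0$, hence $\xi_1=1$; subtracting \eqref{eq:obs-var1a1} for $u_2$ from \eqref{eq:obs-var1a1} for $u_1$, and using that $N$ annihilates constants (so $N(U)+(U-\overline U)=(N+\mathrm{id})(U-\overline U)$), gives a.e.\ on $E$
\[
-\Delta_\Gamma U+\ell g\,(N+\mathrm{id})(U-\overline U)=-(1-g)(1-\xi_2)+(\alpha_1-\alpha_2)g\;<\;0 ,
\]
using $\xi_2\le 1$, $\alpha_1<\alpha_2$ and $g\ge c_0>0$. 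Since $\rho\ge 0$ vanishes on $\Gamma\setminus\spt\rho\supseteq E$, its harmonic extension to $\Omega$ is nonnegative and attains the value $0$ on $E$, so by Hopf's lemma $N(\rho)\le 0$ on $E$; decomposing $N(U)=N(\phi)-N(\rho)$ and using $U=\phi$ on $E$, the inequality above becomes
\[
-\Delta_\Gamma\phi+\ell g\,(N+\mathrm{id})\phi\;\le\;\ell g\,\overline U\qquad\text{a.e.\ on }E. \tag{$\star$}
\]
This is the crux: recognising that the only manageable estimate on the nonlocal part under the difference $u_1-u_2$ is $N(\rho)\le 0$ off $\spt\rho$, which is precisely the feature making $-\Delta_\Gamma+\ell g(N+\mathrm{id})$ inherit a maximum principle on $E$; I expect the set-up of $(\star)$ (and the bookkeeping with the mean $\overline U$ on its right-hand side) to be the delicate point of the proof.

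\emph{Conclusion of part~1.} If $\phi\not\equiv 0$, then either $\phi$ is a positive constant — then $E=\Gamma$, $\rho\equiv 0$, $N(\phi)=0$, and the first display reduces to $0=-(1-g)(1-\xi_2)+(\alpha_1-\alpha_2)g$, which is impossible because its right-hand side is $<0$ — or $\phi$ is nonconstant and attains its maximum over $\Gamma$ at some $x_0\in E$. In the latter case $-\Delta_\Gamma\phi(x_0)\ge 0$ and, since the nonconstant harmonic extension of $\phi$ attains its maximum over $\overline\Omega$ at the boundary point $x_0$, Hopf's lemma yields $N(\phi)(x_0)>0$; hence the left side of $(\star)$ at $x_0$ is strictly larger than $\ell g(x_0)\phi(x_0)$, so $(\star)$ forces $\phi(x_0)<\overline U$. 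But $\overline U=\overline\phi-\overline\rho\le\overline\phi\le\max_\Gamma\phi=\phi(x_0)$, a contradiction. Therefore $\phi\equiv 0$, i.e.\ $u_1\le u_2$, and so $\int_\Gamma u_1\le\int_\Gamma u_2$. If equality held then $u_1=u_2=:u$, and if $u\not\equiv 0$ then on the nonempty open set $\{u>0\}$ both $Lu+(1-g)-\alpha_1 g=0$ and $Lu+(1-g)-\alpha_2 g=0$, whence $(\alpha_1-\alpha_2)g\equiv 0$ there, impossible since $g>0$; thus $u_1=u_2=0$. This proves part~1.

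\emph{Part~2 ($\alpha_1=\alpha_2$).} The same computation gives $(\star)$ as a non-strict inequality, and the ``positive constant'' branch above now yields $0=-(1-g)(1-\xi_2)$, i.e.\ $\xi_2\equiv 1$, instead of a contradiction, while the ``nonconstant'' branch is still excluded exactly as before (one again obtains $\phi(x_0)<\overline U\le\phi(x_0)$). Hence $\phi$ is either $\equiv 0$ or a positive constant, and symmetrically $\rho$ is either $\equiv 0$ or a positive constant; since $\phi\rho=0$ they cannot both be positive constants, so $U=u_1-u_2$ is a constant. If $U\equiv 0$ we are done. If $U\equiv c\ne 0$, say $c>0$, then $u_1=u_2+c>0$ everywhere gives $\xi_1=1$, and subtracting the two equations (using $\Delta_\Gamma U=0$, $(N+\mathrm{id})(U-\overline U)=0$ and $\alpha_1=\alpha_2$) gives $(1-g)(\xi_1-\xi_2)=0$, hence $\xi_2=\xi_1=1$; the case $c<0$ is symmetric. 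This is the assertion of part~2.
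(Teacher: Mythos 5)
Your overall strategy — a maximum-principle argument for $U=u_1-u_2$ at a positive maximum, with Hopf's boundary-point lemma controlling the Dirichlet-to-Neumann contribution — is the same as the paper's. Your extra decomposition $N(U)=N(\phi)-N(\rho)$ together with the sign $N(\rho)\le 0$ on $E$ is a valid variant (and your Hopf argument for it is sound: near a point of $E$ the boundary data $\rho$ vanishes identically, so local elliptic regularity makes $N(\rho)$ continuous there); the paper simply estimates $-g\ell N(U)(x_0)<0$ directly at the maximum of $U$, which makes the $\phi$--$\rho$ bookkeeping unnecessary but is otherwise the same idea. The positive-constant branch, the strict mass inequality in part~1, and the $\xi_1=\xi_2=1$ conclusion in part~2 all match the paper's arguments.

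The genuine gap is the pointwise step ``$-\Delta_\Gamma\phi(x_0)\ge 0$'' followed by ``evaluating $(\star)$ at $x_0$.'' Near $x_0\in E$ one has $\phi=U=u_1-u_2$ with $u_1,u_2\in W^{2,p}(\Gamma)$ for all $p<\infty$, so $\Delta\phi$ is only an $L^p$-function and is in general \emph{not} continuous at $x_0$: from \eqref{eq:obs-var1a1}, $\Delta u_2=(1-g)\xi_2-\alpha_2 g+\ell g\big[N(u_2)+(u_2-\overline{u_2})\big]$, and $\xi_2\in L^\infty(\Gamma)$ may jump across $\partial\{u_2>0\}$, which can intersect $E$. (Contrast with Proposition~\ref{prop:localization}, where a \emph{single} function $u_n$ is positive near its max, so $\xi_n=1$ locally and one has local $C^2$-regularity; here that fails because $u_2$ may vanish inside $E$.) Consequently $\Delta\phi(x_0)$ has no pointwise meaning and $(\star)$ cannot be ``evaluated at $x_0$.'' The paper avoids exactly this by invoking Bony's maximum principle \cite{Bony67,Lion83} for $W^{2,p}$-functions: one obtains a sequence of Lebesgue points $x_k\to x_0$ with $\lim_{k\to\infty}\Delta U(x_k)\le 0$, uses the a.e.\ bounds $\xi_1(x_k)=1\ge\xi_2(x_k)$ and $\alpha_1\le\alpha_2$ along the sequence, and passes to the limit using the continuity of $N(U)$, $U$ and $g$ to reach the contradiction. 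Your argument needs this replacement (in part~1, and again in part~2 where you re-use the ``nonconstant branch'').
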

\begin{proof}
Let $\alpha_1\leq \alpha_2$. The difference $U:=u_1-u_2$ satisfies
\begin{equation}
	0 = \Delta U -(1-g)(\xi_1-\xi_2) + g(\alpha_1-\alpha_2) -g\ell \big[N(U)+ (U-\overline{U})\big].	\label{eq:maxprinc-1}
\end{equation}
We assume that the maximum of $U$ is positive and argue by a maximum principle.
By \cite{Bony67} (see also \cite{Lion83}) for any maximum point $x_0$ of $U$
there exists a sequence $(x_k)_k$ in $\Gamma$ with $x_k\to x_0$ and $\lim_{k\to\infty} \Delta U(x_k)\leq 0$. By assumption and continuity of $U$ we can assume without loss of generality that $U(x_k)>0$ for all $k\in\N$. Hence $\xi_1(x_k)=1\geq \xi_2(x_k)$ and the second term in \eqref{eq:maxprinc-1} as well as the third term are nonpositive in $x_k$. We next consider the term involving the Dirichlet to Neumann operator: Let $F$ be the solution of
\begin{equation*}
	\Delta F=0\quad\text{ in }\Omega,\qquad F|_\Gamma=U,
\end{equation*}
hence $N(U)=\nabla F\cdot\nu$. By assumption and the weak maximum principle
$F$ attains a positive maximum in $x_0$ and by the Hopf boundary point lemma and the strong maximum principle we have $N(U)(x_0)=(\nabla F\cdot\nu)(x_0)>0$  unless $U$ is constant. By \eqref{eq:N-W1p} the Dirichlet to Neumann map $N:W^{2-\frac{1}{p},p}(\Gamma)\to W^{1-\frac{1}{p},p}(\Gamma)$ is continuous for any $1<p<\infty$. By Sobolev embedding we deduce that $N(U)$ is continuous and, unless $U$ is constant,
\begin{equation*}
	\lim_{k\to\infty} - g(x_k)N(U)(x_k) <0.
\end{equation*}
Finally, in $x_0$ the term $U-\overline{U}$ is also nonnegative. Therefore, from \eqref{eq:maxprinc-1} we deduce a contradiction unless $U$
is a positive constant. Hence, $u_1\leq u_2$ or $u_1=u_2+\gamma$ for some positive constant $\gamma$.

\begin{enumerate}
\item
First assume $\alpha_1<\alpha_2$. If $U=\gamma$ is positive then $\xi_1\geq \xi_2$ and \eqref{eq:maxprinc-1} gives a contradiction.
Therefore $u_1\leq u_2$ holds. We now assume that $\int_{\Gamma}u_1=\int_{\Gamma}u_2$. This immediately gives $u_1=u_2$.
If now $\Sigma:=\{u_1>0\}=\{u_2>0\}$ is not empty we obtain by integrating \eqref{eq:obs-var1a1}, for $u_1$ and for $u_2$, over $\Sigma$ and deduce,
since $\int_{\Gamma}N(u)=0$, that
\begin{equation*}
	\alpha_1\int_\Sigma g - \alpha_2\int_\Sigma g = 0,
\end{equation*}
hence $\alpha_1=\alpha_2$, a contradiction.
\item
Next consider $\alpha_1=\alpha_2$. If $U$ is not constant then $u_1\leq u_2$, but interchanging the roles of $u_1,u_2$ also gives $u_2\leq u_1$ and hence equality.

On the other hand, if $U$ is constant, then \eqref{eq:maxprinc-1} implies that $\xi_1=\xi_2$. But now $u_1>0$ or $u_2>0$ and we deduce from \eqref{eq:obs-var1a2} that $\xi_1=\xi_2=1$.
\end{enumerate}
\end{proof}

\begin{corollary}[Existence, uniqueness and monotonicity]\label{cor:unique-Dfin}
For any $m>0$ there exists exactly one solution $(u,\xi,\alpha)$ of \eqref{eq:obs-var1a1}, \eqref{eq:obs-var1a2} with $\int_\Gamma u=m$. Moreover, the map $m\mapsto \alpha$ is monotone increasing. 
\end{corollary}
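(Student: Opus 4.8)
The plan is to obtain all three assertions as bookkeeping consequences of results already established: existence from Theorem~\ref{thm:obs1} together with the equivalence in Proposition~\ref{prop:obs}, and uniqueness together with monotonicity from the comparison principle in Proposition~\ref{prop:monotonicity}. No new estimates are needed.

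For \emph{existence}, I would fix $m>0$. By Theorem~\ref{thm:stat-states-Dpositive} the rescaled system \eqref{eq:eps-stat-1}-\eqref{eq:mass-eps} has a nonnegative solution for each $\eps>0$; Theorem~\ref{thm:obs1} then permits passing to the limit $\eps\to 0$ along a subsequence, producing nonnegative $(w,u,v)$ with the stated regularity and a measurable $\xi$ with $0\leq\xi\leq 1$, solving \eqref{eq:obs-1}-\eqref{eq:obs-4} with $u\xi=u$ and $\int_\Gamma u=m$. Since here $\Omega=B(0,1)$, Proposition~\ref{prop:obs} supplies $\alpha\in\R$ for which $(u,\xi,\alpha)$ solves \eqref{eq:obs-var1a1}, \eqref{eq:obs-var1a2}, and the mass constraint $\int_\Gamma u=m$ is preserved.

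For \emph{uniqueness}, suppose $(u_1,\xi_1,\alpha_1)$ and $(u_2,\xi_2,\alpha_2)$ both solve \eqref{eq:obs-var1a1}, \eqref{eq:obs-var1a2} with $\int_\Gamma u_1=\int_\Gamma u_2=m>0$, and assume without loss of generality $\alpha_1\le\alpha_2$. If $\alpha_1<\alpha_2$, Proposition~\ref{prop:monotonicity}(1) forces $\int_\Gamma u_1<\int_\Gamma u_2$ unless $u_1\equiv u_2\equiv 0$; the former contradicts the equal masses, the latter contradicts $m>0$. Hence $\alpha_1=\alpha_2$, and Proposition~\ref{prop:monotonicity}(2) leaves only the alternatives $u_1=u_2$ or that $u_1-u_2$ is a nonzero constant; the second again violates $\int_\Gamma u_1=\int_\Gamma u_2$, so $u_1=u_2$. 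Since $\xi$ is determined almost everywhere by $u$ and $\alpha$ through \eqref{eq:prop:xi} (and then $\alpha$ is consistent with \eqref{eq:def-alpha-new}), the whole triple $(u,\xi,\alpha)$ is unique.

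For \emph{monotonicity}, let $m_1<m_2$ with corresponding solutions $(u_i,\xi_i,\alpha_i)$, and suppose for contradiction that $\alpha_1>\alpha_2$. Applying Proposition~\ref{prop:monotonicity}(1) with $\alpha_2<\alpha_1$ gives $u_2\le u_1$ and, since $m_1,m_2>0$ excludes $u_1\equiv u_2\equiv 0$, also $\int_\Gamma u_2<\int_\Gamma u_1$, i.e.\ $m_2<m_1$, a contradiction. Hence $\alpha_1\le\alpha_2$, so $m\mapsto\alpha$ is monotone increasing. The only point requiring care throughout is to track which alternative of Proposition~\ref{prop:monotonicity} applies and to invoke $m>0$ to discard the trivial solution; the substantive work — the a priori bounds, the $\eps\to 0$ passage, and especially the maximum-principle comparison through the Dirichlet-to-Neumann operator — has already been carried out, so I expect no genuine obstacle at this stage.
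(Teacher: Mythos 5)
Your proposal is correct and follows essentially the same route as the paper: existence is pulled from Theorems~\ref{thm:stat-states-Dpositive}, \ref{thm:obs1} and the equivalence in Proposition~\ref{prop:obs}, while uniqueness and monotonicity are both read off from the two alternatives in Proposition~\ref{prop:monotonicity}, using the mass constraint and $m>0$ to rule out the degenerate cases, and \eqref{eq:prop:xi} to recover $\xi$ from $(u,\alpha)$. You order the uniqueness argument slightly differently (first eliminating $\alpha_1<\alpha_2$, then invoking part~2 to force $u_1=u_2$) whereas the paper first pins down $u_1=u_2$ and then $\alpha_1=\alpha_2$, but this is a cosmetic rearrangement of the same reasoning.
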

\begin{proof}
Fix $m>0$. By Theorem \ref{thm:stat-states-Dpositive}, Theorem \ref{thm:obs1} and Proposition \ref{prop:obs} there exists a solution $(u,\xi,\alpha)$ of \eqref{eq:obs-var1a1}, \eqref{eq:obs-var1a2} with $\int_\Gamma u =m$. Consider two solutions $(u_1,\xi_1,\alpha_1)$, $(u_2,\xi_2,\alpha_2)$ with $\int_\Gamma u_1=\int_\Gamma u_2=m$. Without loss of generality we may assume that $\alpha_1\leq \alpha_2$. Proposition \ref{prop:monotonicity} implies that $u_1 \leq u_2$ and since $\int_\Gamma (u_1-u_2)=0$ that $u_1=u_2$. By our assumption $m>0$ Proposition \ref{prop:monotonicity} further gives $\alpha_1=\alpha_2$, which by \eqref{eq:prop:xi} finally shows that $(u_1,\xi_1,\alpha_1)=(u_2,\xi_2,\alpha_2)$.

Again by Proposition \ref{prop:monotonicity}, $\alpha$ is a monotonically increasing function of $m$.
\end{proof}

We draw some first conclusions on the polarization properties of the model.
\begin{corollary}
Let  $u\in W^{2,p}(\Gamma)$ for any $1\leq p<\infty$ and $\xi\in L^\infty(\Gamma)$ with $0\leq\xi\leq 1$, $\xi u =u$ on $\Gamma$ be given such that \eqref{eq:obs-var1a1}, \eqref{eq:obs-var1a2} are satisfied.
Assume $m:=\int_\Gamma u >0$. Then $u$ is constant if and only if $g$ is constant. In that case we have $\xi=1$ on $\Gamma$.
\end{corollary}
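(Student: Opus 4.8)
I would prove both implications of the equivalence directly; the extra assertion that $\xi\equiv1$ when $g$ is constant will fall out automatically. The one structural remark underlying everything is that the whole differential/nonlocal part of \eqref{eq:obs-var1a1} annihilates constants: if $\varphi$ is constant on $\Gamma$ then $\Delta\varphi=0$, $\varphi-\overline\varphi=0$, and the harmonic extension of $\varphi$ to $\Omega$ is the same constant, so $N\varphi=\partial_n(\mathrm{const})=0$. Hence on a constant function \eqref{eq:obs-var1a1} collapses to the pointwise algebraic identity $(1-g)\xi=\alpha g$.

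For the direction ``$g$ constant $\Rightarrow$ $u$ constant'' the plan is to exhibit an explicit constant solution and invoke the uniqueness part of Corollary \ref{cor:unique-Dfin}. Write $g\equiv g_0\in(0,1)$ and set $u_0:=m/|\Gamma|>0$, $\xi_0\equiv1$, $\alpha_0:=(1-g_0)/g_0>0$. Then $u_0\xi_0=u_0$ and $0\le\xi_0\le1$, and, using $\Delta u_0=0$, $u_0-\overline{u_0}=0$, $N(u_0)=0$, equation \eqref{eq:obs-var1a1} reduces to $-(1-g_0)+\alpha_0g_0=0$, which holds by the choice of $\alpha_0$. So $(u_0,\xi_0,\alpha_0)$ is an admissible solution of \eqref{eq:obs-var1a1}, \eqref{eq:obs-var1a2} (it trivially has the regularity required there) with $\int_\Gamma u_0=m>0$. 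By Corollary \ref{cor:unique-Dfin} the solution with prescribed mass $m$ is unique, so the given $(u,\xi,\alpha)$ coincides with $(u_0,\xi_0,\alpha_0)$; in particular $u\equiv m/|\Gamma|$ is constant and $\xi\equiv1$.

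For the converse ``$u$ constant $\Rightarrow$ $g$ constant'' I would argue as follows. Since $m=\int_\Gamma u>0$ and $u$ is constant, $u\equiv m/|\Gamma|>0$ on all of $\Gamma$, so the constraint $u\xi=u$ forces $\xi=1$ almost everywhere. Inserting the constant $u$ into \eqref{eq:obs-var1a1} and again using $\Delta u=0$, $u-\overline u=0$, $N(u)=0$, only $0=-(1-g)+\alpha g$ survives, i.e.\ $g\equiv1/(1+\alpha)$ almost everywhere; by continuity of $g$ this holds everywhere, so $g$ is constant (and necessarily $\alpha>0$ so that $g\in(0,1)$). This also establishes the final sentence, since $\xi\equiv1$ in this case.

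I do not expect a real obstacle here. The only thing to check with care is that $N$, $\Delta$ and the mean-zero correction all vanish on constants, so that \eqref{eq:obs-var1a1} degenerates as claimed; and that it is legitimate to route the ``if'' direction through the uniqueness statement of Corollary \ref{cor:unique-Dfin} rather than trying to run a maximum principle directly against the nonlocal term $N(u)+(u-\overline u)$, which would otherwise be the delicate point.
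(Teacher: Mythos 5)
Your proof is correct, and for the converse direction (``$u$ constant $\Rightarrow$ $g$ constant'') it is essentially the same argument the paper uses: constancy of $u$ kills $\Delta u$, $u-\overline u$ and $N(u)$, positivity of the mass forces $\xi\equiv 1$, and the equation collapses to $(1-g)=\alpha g$. For the forward direction, however, you take a genuinely different route. The paper tests \eqref{eq:obs-var1a1} against $u-\overline u$ and uses the sign property \eqref{eq:npos} of the Dirichlet--to--Neumann operator together with $\xi\le 1$ to conclude in one stroke that $\int_\Gamma|\nabla u|^2=0$ and $(1-g)(1-\overline\xi)\int_\Gamma u=0$, hence $u$ constant and $\xi\equiv1$; this is a short, self-contained energy argument that does not invoke uniqueness. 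You instead exhibit the explicit constant solution $(u_0,\xi_0,\alpha_0)=(m/|\Gamma|,\,1,\,(1-g_0)/g_0)$ and appeal to the uniqueness statement of Corollary~\ref{cor:unique-Dfin}. That is logically fine (Corollary~\ref{cor:unique-Dfin} precedes the present statement and does not depend on it, so there is no circularity), and your verification that the constant triple is admissible is complete. The trade-off is that your route is shorter on the page but leans on the comparatively heavy maximum-principle machinery (Bony's maximum principle, the Hopf lemma for the harmonic extension) embedded in Proposition~\ref{prop:monotonicity} and Corollary~\ref{cor:unique-Dfin}, whereas the paper's energy test only needs the elementary quadratic-form positivity \eqref{eq:npos}. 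One small imprecision in your closing remark: the paper does not ``run a maximum principle directly against the nonlocal term'' here -- it avoids that by testing rather than by pointwise comparison -- but this does not affect the correctness of your argument.
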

\begin{proof}
Assume $g$ is constant and let $\overline{\xi} =\fint_\Gamma \xi$.
Testing \eqref{eq:obs-var1a1} with $u-\overline{u}$, using that $g$ constant implies $\int_\Gamma \alpha g(u-\overline{u})=0$, \eqref{eq:npos} yields
\begin{align*}
	0 &= \int_\Gamma |\nabla u|^2 + (1-g)\int_\Gamma \xi(u-\overline{u})+ g\ell \int_\Gamma \big[N(u)+(u-\overline{u})\big](u-\overline{u})\\
	&\geq  \int_\Gamma |\nabla u|^2 + (1-g)\int_\Gamma (u-\overline{\xi} u)
	= \int_\Gamma |\nabla u|^2 + (1-g)(1-\overline{\xi})\int_\Gamma u
\end{align*}
Since $\xi\leq 1$ we deduce that $u$ is constant and $\xi=1$.

Assume vice versa that $u$ is constant. Then $u>0$ and $\xi=1$, moreover $N(u)+(u-\overline{u})=0$. Then \eqref{eq:obs-var1a1} immediately gives that also $g$ is constant.
\end{proof}

%
%
\subsection{Spherical shell shapes: Existence of a critical mass for polarization.}\label{Ss.critmass2}
Also in the present case of a finite cytosolic diffusion rate we would like to give criteria for the onset of polarized states, i.e.~the occurrence of configurations $u$ such that both $\{u=0\}$ and $\{u>0\}$ have positive measure.

As in the case $D=\infty$ we first look for solutions $(u,\xi,\alpha)$ of \eqref{eq:obs-var1a1}, \eqref{eq:obs-var1a2} such that $u>0$. Then $\xi=1$ and
\begin{align}
 0=\Delta u -(1-g) + \alpha g - \ell g\big[N(u)+(u-\overline{u})\big]\,, \label{E2a}
\end{align}
for some suitable $\alpha$. We will prove that there exists a unique value $\alpha$ for which (\ref{E2a}) can be solved and characterize this critical value $\alpha=\alpha_*$. In the following we let $\tilde N$ denote the mapping $u\mapsto N(u)+(u-\overline{u})$. We then write the problem as
\begin{equation}
	-\Delta u+\ell g\tilde N(u)  =\alpha g-(1-g)  \label{E3}%
\end{equation}
We claim that the operator $-\Delta  +\ell g\tilde N$ has a simple zero eigenvalue (alternatively, the kernel is
one-dimensional). Indeed, it is clear that the constants are elements of the
kernel of this operator. Vice versa, any element $U$ of the kernel of $-\Delta  +\ell g\tilde N$ satisfies \eqref{eq:maxprinc-1} with $\alpha_1=\alpha_2$ and $\xi_1=\xi_2$, hence the arguments in the proof of Proposition \ref{prop:monotonicity} imply that $U$ is constant.

We proceed to characterize the solvability of \eqref{E3} by using Fredholm theory. We first obtain by \eqref{eq:N-Lp} that
\begin{equation*}
	\Big|\int_\Gamma gN(\varphi)\varphi\Big| \leq C(\max_\Gamma g) \|\varphi\|_{L^2(\Gamma)}\|\varphi\|_{H^1(\Gamma)},\qquad
	\Big|\int_\Gamma (\varphi-\bar \varphi)\varphi\Big| \leq C(\Omega)\int_\Gamma\varphi^2,
\end{equation*}
which implies the estimate
\begin{align*}
	\big\langle -\Delta\varphi +\ell g\tilde N(\varphi),\varphi\big\rangle &\geq \int_\Gamma |\nabla \varphi|^2 -C\ell (\max_\Gamma g)\Big(\|\varphi\|_{L^2(\Gamma)}\|\varphi\|_{H^1(\Gamma)} +\|\varphi\|_{L^2(\Gamma)}^2\Big)\\
	&\geq \frac{1}{2}\int_\Gamma |\nabla \varphi|^2 -C(\ell,g,\Omega)\|\varphi\|_{L^2(\Gamma)}^2.
\end{align*}
Therefore, there exists $\lambda\geq 0$ such that the operator $-\Delta + \ell g\tilde N + \lambda\id$ is coercive on $H^1(\Gamma)$ and, by the Lax--Milgram Theorem, bijective and continuous from $H^1(\Gamma)$ to $H^1(\Gamma)^*$ with continuous inverse.

Therefore, $K_\lambda:= (-\Delta + \ell g\tilde N + \lambda\id)^{-1}$ defines a compact operator from $L^2(\Gamma)$ to $L^2(\Gamma)$. For an arbitrary $f\in L^2(\Gamma)$ we deduce from \cite[Theorem D.5]{Evan10} first that
\begin{equation}
	-\Delta u + \ell g\tilde N(u) = f
\end{equation}
is solvable if and only if
\begin{equation}
	K_\lambda f \in \mathcal{R}(\id- \lambda K_\lambda) = \mathcal{N}(\id - \lambda K_\lambda^*)^\perp,
\end{equation}
and second that $\mathcal{N}(\id - \lambda K_\lambda^*)$ is one-dimensional. We further compute $K_\lambda^*=(-\Delta + \ell (g\tilde N)^* + \lambda\id)^{-1}$ and claim that $(g\tilde N)^*=\tilde N(g\cdot)$. In fact, for any $\varphi,\psi\in L^2(\Gamma)$ we deduce by \eqref{eq:Nsym} that
\begin{equation*}
	\int_\Gamma g\psi  \big[N(\varphi)+(\varphi-\bar\varphi)\big] = \int_\Gamma N(g\psi)\varphi +g\psi\varphi - \frac{1}{|\Gamma|}\int_\Gamma g\psi\int_\Gamma\varphi = \int_\Gamma \big[N(g\psi) +g\psi-\overline{g\psi}\big]\varphi.
\end{equation*}

Similarly as above, we then see that $\psi\in \mathcal{N}(\id-\lambda K_\lambda^*)$ if and only if
\begin{equation}
	-\Delta\psi + \ell \tilde N(g\psi) = 0. \label{eq:ker-adjoint}
\end{equation}
In order to characterize the critical value of $\alpha$, below we need that $\int\psi g\neq 0$.
\begin{lemma}\label{lem:adjoint}
Let $\psi\in L^2(\Gamma)$ be a nontrivial solution of \eqref{eq:ker-adjoint}. Then $\psi\in H^1(\Gamma)$ and either $\psi\geq 0$ or $\psi\leq 0$ on $\Gamma$.
\end{lemma}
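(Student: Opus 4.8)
The plan is to derive the sign dichotomy from a Krein--Rutman argument for the resolvent operator $K_\lambda$ introduced above, using that the solution space of \eqref{eq:ker-adjoint} has already been identified with the one-dimensional kernel $\mathcal N(\id-\lambda K_\lambda^*)$. I would proceed in three steps: first bootstrap the regularity of $\psi$; then show that $K_\lambda$ is positive with spectral radius $\lambda^{-1}$; and finally apply Krein--Rutman to produce a nonnegative element of $\mathcal N(\id-\lambda K_\lambda^*)$, which forces every nontrivial solution of \eqref{eq:ker-adjoint} to be a scalar multiple of it and hence of one sign.

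\emph{Regularity.} Since $g$ is bounded, $g\psi\in L^2(\Gamma)$ for $\psi\in L^2(\Gamma)$. As $N$ is a first-order operator on $\Gamma$ (see the appendix), $N(g\psi)\in H^{-1}(\Gamma)$, and together with $g\psi-\overline{g\psi}\in L^2(\Gamma)\hookrightarrow H^{-1}(\Gamma)$ equation \eqref{eq:ker-adjoint} yields $\Delta\psi\in H^{-1}(\Gamma)$, hence $\psi\in H^1(\Gamma)$ by elliptic regularity. Iterating, and using the Sobolev embeddings on the two-dimensional manifold $\Gamma$ together with the $L^p$-mapping properties of $N$ from the appendix, one gets $\psi\in W^{1,p}(\Gamma)$ for all $p<\infty$, in particular $\psi\in C^0(\Gamma)$. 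This proves the first assertion.

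\emph{Positivity of $K_\lambda$ and its spectral radius.} Fix $\lambda$ as before so that $K_\lambda=(-\Delta+\ell g\tilde N+\lambda\,\id)^{-1}$ is a well-defined compact operator on $L^2(\Gamma)$; the same bootstrap shows that $K_\lambda$ also maps $C^0(\Gamma)$ compactly into itself. To see $K_\lambda$ is positive, take $u=K_\lambda f$ with $f\ge 0$ and suppose $u$ attains a negative minimum at $x_0\in\Gamma$. By Bony's maximum principle, as in the proof of Proposition \ref{prop:monotonicity}, there is a sequence $x_k\to x_0$ with $\limsup_k(-\Delta u(x_k))\le 0$. Letting $F$ be the harmonic extension of $u$ to $\Omega$, the point $x_0$ is a global minimum of $F$ on $\overline\Omega$, so $N(u)(x_0)=\partial_n F(x_0)\le 0$; moreover $u(x_0)-\overline u\le 0$ and $\lambda u(x_0)<0$. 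Since $u$, hence $N(u)$, is continuous, passing to the limit in $f(x_k)=-\Delta u(x_k)+\ell g(x_k)\tilde N(u)(x_k)+\lambda u(x_k)$ gives $\limsup_k f(x_k)\le\ell g(x_0)\tilde N(u)(x_0)+\lambda u(x_0)<0$, contradicting $f\ge 0$; hence $u\ge 0$, and by density this holds for all nonnegative $f\in L^2(\Gamma)$. Finally $\tilde N(\mathbf 1)=0$ gives $K_\lambda\mathbf 1=\lambda^{-1}\mathbf 1$, and since $\mathbf 1$ is an interior point of the positive cone of $C^0(\Gamma)$, positivity of $K_\lambda$ forces $r(K_\lambda)=\lambda^{-1}$.

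\emph{Conclusion.} Applying the Krein--Rutman theorem to the compact positive operator $K_\lambda$ on $C^0(\Gamma)$, whose positive cone is solid with $\mathbf 1$ in its interior, yields a positive eigenfunctional $\mu$ of $K_\lambda^*$ for the eigenvalue $\lambda^{-1}$, that is, a nonnegative Radon measure on $\Gamma$ solving $-\Delta\mu+\ell\tilde N(g\mu)=0$; a further bootstrap (now with measure data) shows $\mu$ has a nonnegative $L^2(\Gamma)$-density $\psi_0$, so that $\psi_0\in\mathcal N(\id-\lambda K_\lambda^*)\subset L^2(\Gamma)$. Since that space is one-dimensional, every nontrivial solution $\psi$ of \eqref{eq:ker-adjoint} is a nonzero multiple of $\psi_0$, hence $\ge 0$ or $\le 0$ on $\Gamma$. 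The main obstacle is the positivity of $K_\lambda$: the nonlocal Dirichlet to Neumann term has to be absorbed into a maximum-principle argument, which requires combining Bony's lemma with the elementary sign of the normal derivative of a harmonic function at a global boundary extremum and enough regularity of $u$ and $N(u)$ to pass to the limit along the Bony sequence; identifying $r(K_\lambda)=\lambda^{-1}$ and passing to the adjoint via Krein--Rutman are then routine.
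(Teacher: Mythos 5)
Your proof is correct in outline but follows a genuinely different route from the paper's. The paper argues directly: it takes the harmonic extension $F$ of $g\psi$, integrates $\Delta F$ over $\{F>0\}\cap\Omega$, and uses the Gauss theorem together with the signs of the two boundary fluxes (the normal derivative of $F$ across $\partial\{F>0\}\cap\Omega$ and of $\psi$ across $\partial\{\psi>0\}$) to force a chain of inequalities whose endpoints are both $0$; this pins $|\{\psi>0\}|$ to be $0$ or $|\Gamma|$ in a few lines, with no spectral theory. You instead interpret $\psi$ as a principal eigenfunction of the adjoint resolvent, which is conceptually appealing: your positivity argument for $K_\lambda$ (Bony's lemma combined with the sign of $\partial_n F$ at a global boundary minimum, exactly as in Proposition \ref{prop:monotonicity}) is sound, $K_\lambda\mathbf 1=\lambda^{-1}\mathbf 1$ with $\mathbf 1$ interior to $C^0(\Gamma)_+$ does pin down $r(K_\lambda)=\lambda^{-1}$, and Krein--Rutman then delivers a nonnegative eigenfunctional, so one-dimensionality of $\mathcal N(\id-\lambda K_\lambda^*)$ (already established before the lemma via Fredholm) finishes the proof. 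The one soft spot is the step where you take the eigenfunctional as a Radon measure $\mu\in C^0(\Gamma)^*$ and then invoke an unproved ``bootstrap with measure data'' to get $\mu\in L^2(\Gamma)$; this is avoidable and should be avoided. Since $K_\lambda$ is also compact and positive on the Banach lattice $L^2(\Gamma)$, with the same nonzero spectrum as on $C^0(\Gamma)$ (bootstrap shows every $L^2$-eigenfunction for a nonzero eigenvalue lies in $C^0$), the weak form of Krein--Rutman applied on $L^2(\Gamma)$ produces directly a nonnegative $\psi_0\in L^2(\Gamma)$ with $K_\lambda^*\psi_0=\lambda^{-1}\psi_0$, and then the one-dimensionality argument closes the proof without any measure-regularity discussion. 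In summary: both arguments work; the paper's is shorter and fully elementary, while yours makes the principal-eigenfunction structure explicit at the cost of invoking Krein--Rutman and re-deriving a positivity statement that is close in spirit to what Proposition \ref{prop:monotonicity} already contains.
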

\begin{proof}
By \eqref{eq:ker-adjoint} and \eqref{eq:N-H-1} we first obtain from $\psi\in L^2(\Gamma)$ that $\psi\in H^1(\Gamma)$.
Hence $\psi\in L^p(\Gamma)$ for any $1\leq p<\infty$, and a further application of \eqref{eq:N-H-1} implies $\psi\in W^{1,p}(\Gamma)$ and in particular $g\psi\in C^0(\Gamma)$. Hence there exists a unique classical solution $F\in C^\infty(\Omega)\cap C^0(\overline{\Omega})$  of
\begin{equation*}
	\Delta F = 0\quad\text{ in }\Omega,\qquad F= g\psi\text{ on }\Gamma.
\end{equation*}
In particular, we have $N(g\psi)=\partial_nF$.

We now integrate $\Delta F$ over the set $\{F>0\}\cap\Omega$. We remark that $\nabla F \cdot \nu_{\{F>0\}}\leq 0$ on $\Omega\cap\partial\{F>0\}$ and that $\nabla \psi \cdot \nu_{\{\psi>0\}}\leq 0$ on $\partial\{\psi>0\}$. Therefore, the Gauss Theorem yields
\begin{align*}
	0 = \int_{\{F>0\}} \Delta F &= \int_{\partial\{F>0\}\cap\Omega} \nabla F \cdot \nu_{\{F>0\}}\,d\Ha^2 + \int_{\{F>0\}\cap\Gamma}\partial_n F \notag\\
	&\leq  \int_{\{\psi>0\}} N(g\psi) = \frac{1}{\ell} \int_{\{\psi>0\}} \Delta \psi - \int_{\{\psi>0\}}\big(g\psi -\overline{g\psi}\big)\\
	&  \leq - \int_{\{\psi>0\}} g\psi +\frac{|\{\psi>0\}|}{|\Gamma|}\int_{\Gamma} g\psi \leq \Big(-1+\frac{|\{\psi>0\}|}{|\Gamma|}\Big)\int_{\{\psi>0\}} g\psi \leq 0.
\end{align*}
We therefore deduce that $|\{\psi>0\}|$ is either zero or coincides with $|\Gamma|$, hence $\psi\leq 0$ on $\Gamma$ or $\psi\geq 0$ on $\Gamma$.
\end{proof}

In the following let us fix any nontrivial nonnegative solution $\psi$ of \eqref{eq:ker-adjoint}. By the argument from above a solution of \eqref{E2a} exists if and only if
\begin{equation*}
	\int_{\Gamma}\psi[\alpha g-(1-g)]  =0,
\end{equation*}
whence if
\begin{equation}
	\alpha=\alpha_*:= \frac{\int_{\Gamma}\psi(1-g)  }{\int_{\Gamma}\psi g}. \label{E4}%
\end{equation}
(Note that $\alpha_*$ is well-defined by Lemma \ref{lem:adjoint}.) Then all solutions $(u,\alpha)$ of \eqref{E2a} have $\alpha=\alpha_*$ and can be written as $u=u_*+c$, where $c$ is constant and $u_*$ is the unique solution with
\begin{equation*}
	\min_{\Gamma} u_* = 0.
\end{equation*}

We now argue exactly as in the case
$\ell =0$. We define the critical mass as
\begin{equation}
	m_*:=\int_{\Gamma}u_* \label{eq:mCrit-D}
\end{equation}
and obtain the following characterization for the onset of polarized states.
\begin{theorem}\label{thm:CritMass-D}
Let $g$ be as in (\ref{defg}). Consider $m_*$ as defined in \eqref{eq:mCrit-D} and for $m>0$ the solution $(u,\xi,\alpha)$ of \eqref{eq:obs-var1a1}, \eqref{eq:obs-var1a2}.
\begin{enumerate}
\item[a)] The map $m\mapsto \alpha$ is strictly monotone increasing on $(0,m_\ast]$. If $m>0$ then $\alpha>\alpha_0$.
\item[b)] If $m>m_{\ast}$ we have that $u>0$ in $\Gamma$
and $\alpha=\alpha_{\ast}.$ Moreover
$u=u_{\ast}+(m-m_{\ast})\frac{1}{|\Gamma|}$.
\item[c)] If $m<m_{\ast}$ we have $\left\vert \{  u=0\}  \right\vert >0$
and $\alpha<\alpha_{\ast}$.
\end{enumerate}
\end{theorem}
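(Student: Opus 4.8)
\emph{Overall strategy and Step~1 (the supercritical regime, b)).} The plan is to mirror the proof of Theorem~\ref{CritMass} for $D=\infty$, using in place of the variational‑inequality arguments the tools already available for \eqref{eq:obs-var1a1},~\eqref{eq:obs-var1a2}: existence, uniqueness and monotone dependence $m\mapsto\alpha$ (Corollary~\ref{cor:unique-Dfin}), the comparison principle (Proposition~\ref{prop:monotonicity}), and the Fredholm analysis of \eqref{E3}, which shows that the fully positive equation \eqref{E2a} is solvable \emph{only} for $\alpha=\alpha_\ast$, with solution set $\{u_\ast+c\}$. Write $m\mapsto\alpha(m)$ for the assignment from Corollary~\ref{cor:unique-Dfin}; it is monotone increasing, and the whole statement amounts to locating the mass at which $\alpha(m)$ first reaches $\alpha_\ast$. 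For $m\ge m_\ast$ put $c=(m-m_\ast)/|\Gamma|\ge 0$, $u=u_\ast+c$, $\xi\equiv 1$, $\alpha=\alpha_\ast$. Adding a constant changes neither $\Delta u_\ast$, nor $N(u_\ast)$, nor $u_\ast-\overline{u_\ast}$, and $u_\ast$ solves \eqref{E2a} with $\alpha_\ast$, so $(u,\xi,\alpha)$ solves \eqref{eq:obs-var1a1},~\eqref{eq:obs-var1a2}, has mass $m_\ast+c|\Gamma|=m$, is nonnegative, and satisfies $u>0$ on $\Gamma$ once $m>m_\ast$. By the uniqueness in Corollary~\ref{cor:unique-Dfin} this is \emph{the} solution of mass $m$; this proves b) and, for $m=m_\ast$, shows $(u_\ast,1,\alpha_\ast)$ is the solution of mass $m_\ast$, i.e.\ $\alpha(m_\ast)=\alpha_\ast$.

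\emph{Step~2 (rigidity at $\alpha=\alpha_\ast$).} Observe that $u_\ast$ and $u_\ast+1$ are two solutions of \eqref{eq:obs-var1a1},~\eqref{eq:obs-var1a2} with parameter $\alpha_\ast$ and different masses. Hence, if $(u,\xi,\alpha_\ast)$ is \emph{any} solution with parameter $\alpha_\ast$, then applying Proposition~\ref{prop:monotonicity}(2) to it and to $(u_\ast,1,\alpha_\ast)$ forces $u$ and $u_\ast$ either to coincide or to differ by a nonzero constant, with $\xi\equiv 1$ in the latter case; in the former case, using $\Delta u_\ast=0$ a.e.\ on $\{u_\ast=0\}$ and that $u_\ast$ solves \eqref{E2a}, the representation \eqref{eq:prop:xi} yields $\xi\equiv 1$ as well. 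So in all cases $\xi\equiv 1$, $u$ solves \eqref{E2a}, hence $u=u_\ast+c$; then $u\ge 0$ and $\min_\Gamma u_\ast=0$ give $c\ge 0$, so $m=\int_\Gamma u=m_\ast+c|\Gamma|\ge m_\ast$. Contrapositively, $m<m_\ast$ forces $\alpha(m)\ne\alpha_\ast$, and combined with monotonicity and $\alpha(m_\ast)=\alpha_\ast$ this yields $\alpha(m)<\alpha_\ast$ for all $m\in(0,m_\ast)$, which is the second assertion of c). The same rigidity gives the strict monotonicity in a): if $0<m_1<m_2\le m_\ast$ had $\alpha(m_1)=\alpha(m_2)$, the two (distinct, since of different mass) solutions would by Proposition~\ref{prop:monotonicity}(2) differ by a nonzero constant with common parameter $\alpha_\ast$ and both of the form $u_\ast+c_i$, $c_i\ge 0$, forcing $m_i\ge m_\ast$, contradicting $m_1<m_\ast$.

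\emph{Step~3 (positivity of $\{u=0\}$ for $m<m_\ast$, and $\alpha>\alpha_0$).} Let $0<m<m_\ast$ with solution $(u,\xi,\alpha)$; by Step~2, $\alpha<\alpha_\ast$. If $|\{u=0\}|=0$, then $\xi=1$ a.e., so $u$ solves \eqref{E2a} and the Fredholm alternative forces $\alpha=\alpha_\ast$, a contradiction; hence $|\{u=0\}|>0$, finishing c). For the bound $\alpha>\alpha_0$ (any $m>0$): $u\not\equiv 0$, so $u$ attains its maximum at some $x_0$ with $u(x_0)>0$, and $u>0$, $\xi=1$ near $x_0$; as in the proof of Proposition~\ref{prop:monotonicity} (Bony's lemma) pick $x_k\to x_0$ with $x_k\in\{u>0\}$ and $\lim_k\Delta u(x_k)\le 0$, and let $F$ be the harmonic extension of $u$, so the maximum principle and Hopf's lemma give $N(u)(x_0)=\partial_n F(x_0)\ge 0$ (strict unless $u$ constant), while $u(x_0)-\overline u\ge 0$. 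Using continuity of $N(u)$ (mapping properties plus Sobolev embedding, as used earlier) and passing to the limit in \eqref{eq:obs-var1a1} along $x_k$,
\[
 \alpha\,g(x_0)=-\lim_{k\to\infty}\Delta u(x_k)+\bigl(1-g(x_0)\bigr)+\ell\,g(x_0)\bigl[N(u)(x_0)+u(x_0)-\overline u\bigr]\ \ge\ 1-g(x_0),
\]
so $\alpha\ge\frac{1-g(x_0)}{g(x_0)}\ge\frac{1-g_{\max}}{g_{\max}}=\alpha_0$; if $g$ is non-constant then $u$ is non-constant (a constant $u>0$ would force $\alpha g=1-g$ in \eqref{eq:obs-var1a1}), hence $N(u)(x_0)>0$ and $\alpha>\alpha_0$.

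\emph{Main obstacle.} The crux is Step~2, i.e.\ ruling out genuinely polarized solutions ($\xi\not\equiv1$, $|\{u=0\}|>0$) at the critical value $\alpha=\alpha_\ast$; this is exactly where the spherical geometry enters, through Proposition~\ref{prop:obs}, which rewrites the nonlocal steady state as \eqref{eq:obs-var1a1} and thereby makes available the strong maximum principle for $-\Delta+\ell g\tilde N$ (Proposition~\ref{prop:monotonicity}): two solutions with the same $\alpha$ can only differ by a constant. Once this rigidity and the normalization $\min_\Gamma u_\ast=0$ are in hand, everything else (b), the strict monotonicity in a), the positivity of $\{u=0\}$ in c)) is bookkeeping with Corollary~\ref{cor:unique-Dfin} and the Fredholm alternative. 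A secondary technical point is justifying the pointwise maximum-principle inequality for $u\in W^{2,p}(\Gamma)$ via the Bony–Lions lemma together with the continuity of $N(u)$, both of which have already been invoked elsewhere in the paper.
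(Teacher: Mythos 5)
Your proof is correct and follows the same overall strategy as the paper's -- both hinge on the monotonicity/rigidity result of Proposition~\ref{prop:monotonicity}, the uniqueness statement of Corollary~\ref{cor:unique-Dfin}, and the Fredholm fact that \eqref{E2a} is solvable only for $\alpha=\alpha_\ast$ with solution set $\{u_\ast+c\}$. Two sub-steps differ genuinely from the paper's treatment, though. First, for part~b) the paper argues by exclusion (assume $\alpha<\alpha_\ast$: then $u\le u_\ast$ so $m\le m_\ast$, contradiction; assume $\alpha>\alpha_\ast$: then $u\ge u_\ast+c$ for every $c>0$, impossible), whereas you construct $(u_\ast+(m-m_\ast)/|\Gamma|,\,1,\,\alpha_\ast)$ explicitly and invoke uniqueness. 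Your version has the small advantage of directly producing the solution at $m=m_\ast$ and hence the identity $\alpha(m_\ast)=\alpha_\ast$, which the paper uses implicitly in part~c). Second, for the bound $\alpha>\alpha_0$ in a), the paper's argument is a one-line comparison: it observes that $\xi_0:=\alpha_0 g/(1-g)$ satisfies $0\le\xi_0\le1$, so $(0,\xi_0,\alpha_0)$ is a solution with mass zero, and monotonicity then forbids any solution with $m>0$ and $\alpha\le\alpha_0$; you instead run a pointwise maximum-principle argument at a maximum of $u$ (Bony/Schauder plus Hopf's lemma applied to the harmonic extension). Both are valid; the paper's comparison solution is shorter and mirrors its own strategy elsewhere, while your route is the natural analogue of the localization argument in Proposition~\ref{prop:localization}, item~2. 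A minor caveat applies to both proofs: the strict inequality $\alpha>\alpha_0$ and indeed parts of the statement implicitly require $g$ non-constant (for $g$ constant one has $m_\ast=0$, $\alpha_0=\alpha_\ast$, and constant positive solutions with $\alpha=\alpha_0$); you flag this caveat explicitly, the paper does not.
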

\begin{proof}
\begin{enumerate}
\item[a)] We know from Corollary \ref{cor:unique-Dfin} that $m\mapsto \alpha$ is monotone increasing. Consider $m_1<m_2$ and assume that $\alpha_1=\alpha_2$. Since $u_1=u_2$ gives a contradiction Proposition \ref{prop:monotonicity} yields that $u_1$, $u_2$ only differ by a nonzero constant and  that $\xi_1=\xi_2=1$.
But then $u_1,u_2$ are solutions of \eqref{E2a}, which implies $\alpha_1=\alpha_2=\alpha_*$ and $m_1\geq m_*$, $m_2>m_*$. Therefore, $m\mapsto \alpha$ is strictly monotone increasing on $(0,m_*]$.

To prove the second claim, let $\xi_0:=\frac{\alpha_0 g}{1-g}$. Then $0\leq\xi_0\leq \frac{\alpha_0 g_{\max}}{1-g_{\max}}=1$, hence $(0,\xi_0,\alpha_0)$ is a solution of \eqref{eq:obs-var1a1}, \eqref{eq:obs-var1a2}. By monotonicity there are no solutions with $\alpha\leq \alpha_0$ and $\int_\Gamma u>0$.

\item[b)] Assume first that $\alpha<\alpha_*$. By Proposition \ref{prop:monotonicity} we deduce that $u\leq u_*$, hence $m\leq m_*$, a contradiction. Next assume that $\alpha>\alpha_*$. Again by monotonicity we have $u\geq u_*+c$ for any constant $c>0$, which again is impossible. Hence $\alpha=\alpha_*$ and $u=u_*+c$. We further conclude that $c|\Gamma|=\int_\Gamma (u-u_*)=m-m_*$ which gives the claimed characterization of $u$.
\item[c)] By part a) for any solution $(u,\xi,\alpha)$ with $m<m_*$ we have $\alpha\leq \alpha_*$. If $\alpha=\alpha_*$ then we would have $u=u_*+c$ with $c<0$, a contradiction to $u\geq 0$. This shows $\alpha<\alpha_*$.
Finally, if $u>0$ almost everywhere then $\xi=1$ almost everywhere and we deduce that $u$ is a solution of \eqref{E2a}. But this requires $\alpha=\alpha_*$.
\end{enumerate}
\end{proof}

\bigskip
{\bf Acknowledgment.}\\
We thank Waldemar Kolanus for several interesting discussions on cell polarization and  Danielle Hilhorst for bringing reference  \cite{BrNi82} to our attention. MR gratefully acknowledges the hospitality of the Hausdorff Center  of Mathematics at the University of Bonn.

\begin{appendix}\label{app:ND}
\section{The Dirichlet to Neumann operator}
Here we briefly introduce the Dirichlet to Neumann and the Neumann to Dirichlet operators and state the properties that have been used in our arguments. We set $H^{-\frac{1}{2}}_*(\Gamma):=\{f\in H^{-\frac{1}{2}}(\Gamma)\,:\, \langle f,1\rangle =0\}$, the subspace $H^{\frac{1}{2}}_*(\Gamma)$ is defined correspondingly. With a slight abuse of notation we write in the following $\int_\Gamma fg$ for $f\in H^{-\frac{1}{2}}(\Gamma)$, $g\in H^{\frac{1}{2}}(\Gamma)$ and the duality product $\langle f,g\rangle$.

For $f\in H^{\frac{1}{2}}(\Gamma)$, let $F\in H^1(\Omega)$ denote the weak solution of the Dirichlet problem
\begin{equation*}
	\Delta F =0  \quad\text{ in }\Omega,\qquad F=f \quad\text{ on }\Gamma
\end{equation*}
and let $Nf:=\frac{\partial}{\partial n}F$ be the Neumann data on $\Gamma$. This defines a continuous map $N:H^{\frac{1}{2}}(\Gamma)\to H^{-\frac{1}{2}}_*(\Gamma)$. 

Similarly, for $f\in H^{-\frac{1}{2}}_*(\Gamma)$, let $\tilde F\in H^1(\Omega)$ denote the weak solution of the Neumann problem
\begin{equation*}
	\Delta \tilde F =0  \quad\text{ in }\Omega,\qquad \frac{\partial}{\partial n}\tilde F =f \quad\text{ on }\Gamma,\quad \int_\Gamma f =0
\end{equation*}
and let $Tf:=\tilde F$ be the Dirichlet data on $\Gamma$. This defines a continuous map $T:H^{-\frac{1}{2}}_*(\Gamma)\to H^{\frac{1}{2}}_*(\Gamma)$. We extend $T$ to a continuous map
$T:H^{-\frac{1}{2}}(\Gamma)\to H^{\frac{1}{2}}_*(\Gamma)$ by setting $Tc=0$ for any constant function $c$.

\begin{proposition}\label{prop:Neumann_map}
The following properties hold.
\begin{align}
	\int_\Gamma (Nf)g &= \int_\Gamma fN g&&\text{ for all }f,g\in H^{\frac{1}{2}}(\Gamma)\label{eq:Nsym}\\%
	\int_\Gamma (Nf)f &= \int_\Omega |\nabla F|^2\quad&&\text{ for any }\varphi\in H^{\frac{1}{2}}(\Gamma)\label{eq:npos}\\
	N:W^{2-\frac{1}{p},p}(\Gamma)&\to W^{1-\frac{1}{p},p}(\Gamma)\quad&&\text{ is continuous for any }1<p<\infty, \label{eq:N-W1p}\\
  N:W^{1,p}(\Gamma)&\to L^p(\Gamma)\quad&&\text{ is continuous for any }1<p<\infty, \label{eq:N-Lp}\\
  N:L^p(\Gamma)&\to W^{-1,p}(\Gamma)\quad&&\text{ is continuous for any }1<p<\infty, \label{eq:N-H-1}\\
	T:L^p(\Gamma)&\to W^{1,p}(\Gamma)\quad&&\text{ is continuous for any }1<p<\infty. \label{eq:n9}
\end{align}
\end{proposition}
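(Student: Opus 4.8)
The plan is to separate the two algebraic identities \eqref{eq:Nsym}, \eqref{eq:npos} from the mapping properties \eqref{eq:N-W1p}--\eqref{eq:n9}: the former come from Green's formula, while the latter reflect the fact that $N$ is a classical pseudodifferential operator of order one on $\Gamma$ (so that it maps $W^{s,p}(\Gamma)$ continuously into $W^{s-1,p}(\Gamma)$ for every $s\in\R$ and $1<p<\infty$) and that $T$ is, up to the one-dimensional space of constants, its inverse.

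For \eqref{eq:Nsym} and \eqref{eq:npos} I would first take $f,g\in C^\infty(\Gamma)$, so that the harmonic extensions $F,G$ are smooth up to $\Gamma$, and apply Green's identity: $\int_\Gamma (Nf)\,g = \int_\Gamma (\partial_n F)\,g = \int_\Omega \nabla F\cdot\nabla G$, which is symmetric under the exchange of $F$ and $G$; reading this backwards gives $\int_\Gamma (Ng)\,f$. Since $C^\infty(\Gamma)$ is dense in $H^{1/2}(\Gamma)$ and $N\colon H^{1/2}(\Gamma)\to H^{-1/2}(\Gamma)$ is continuous, \eqref{eq:Nsym} passes to all $f,g\in H^{1/2}(\Gamma)$; choosing $g=f$ yields \eqref{eq:npos}. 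Here the normal trace of an $H^1$-solution $F$ with $\Delta F=0$ is the element of $H^{-1/2}(\Gamma)$ defined by $\langle \partial_n F,\varphi\rangle := \int_\Omega\nabla F\cdot\nabla\Phi$ for any $H^1$-extension $\Phi$ of $\varphi$, which is consistent with the classical normal derivative when $F$ is smooth.

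For the mapping properties I would argue as follows. Property \eqref{eq:N-W1p} is the cleanest: if $f\in W^{2-\frac1p,p}(\Gamma)$ then, by $L^p$-regularity for the Dirichlet problem on the $C^3$-domain $\Omega$, one has $F\in W^{2,p}(\Omega)$ with a corresponding norm bound, hence $\nabla F$ has a trace in $W^{1-\frac1p,p}(\Gamma)$ and $Nf=\partial_n F$ lies in $W^{1-\frac1p,p}(\Gamma)$ continuously. For \eqref{eq:N-Lp} I would interpolate this with the lower-order estimate $W^{1-\frac1p,p}(\Gamma)\ni f\mapsto F\in W^{1,p}(\Omega)$ to obtain that $f\in W^{1,p}(\Gamma)$ maps to $F$ in a space whose gradient still has an $L^p(\Gamma)$-trace, which gives $Nf\in L^p(\Gamma)$; alternatively one may simply invoke the $L^p$-boundedness of order-one pseudodifferential operators. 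Property \eqref{eq:N-H-1} then follows by duality from \eqref{eq:N-Lp} together with the symmetry \eqref{eq:Nsym}: for $g\in W^{1,p'}(\Gamma)$ we have $|\langle Nf,g\rangle| = |\langle f,Ng\rangle|\le \|f\|_{L^p(\Gamma)}\|Ng\|_{L^{p'}(\Gamma)}\le C\|f\|_{L^p(\Gamma)}\|g\|_{W^{1,p'}(\Gamma)}$, and taking the supremum over such $g$ gives the bound in $W^{-1,p}(\Gamma)$. Finally, for \eqref{eq:n9} I would use that the restriction of $T$ to mean-zero data inverts $N$: the operator $N$ from the mean-zero subspace of $W^{1,p}(\Gamma)$ to the mean-zero subspace of $L^p(\Gamma)$ is injective (uniqueness for the Neumann problem) and onto ($L^p$-solvability of the Neumann problem, again via elliptic regularity for $\Omega$), so its inverse is bounded by the open mapping theorem; since $T$ in addition annihilates constants by definition, the continuity $T\colon L^p(\Gamma)\to W^{1,p}(\Gamma)$ follows.

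The step I expect to be the main obstacle is not conceptual but a matter of bookkeeping: invoking elliptic $L^p$- and fractional-Sobolev regularity for the Dirichlet and Neumann problems on the $C^3$-domain $\Omega$ with exactly the right scale of boundary spaces, and correctly handling the weakly-defined normal trace for low-regularity data. Once those standard facts are set up, the identities and the remaining mapping statements follow by routine interpolation and duality.
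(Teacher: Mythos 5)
Your overall decomposition coincides with the paper's: \eqref{eq:Nsym} and \eqref{eq:npos} come from Green's identity (the paper just says they ``follow easily from the definition''), \eqref{eq:N-W1p} from $L^p$-elliptic regularity for the Dirichlet problem plus the trace theorem, and \eqref{eq:N-H-1} from \eqref{eq:N-Lp} by the exact same duality computation you describe. For \eqref{eq:N-Lp} and \eqref{eq:n9}, however, the paper simply cites Taylor's treatment of $N$ and $T$ as order-one and order-$(-1)$ pseudodifferential operators, which is your fallback option; your primary route for \eqref{eq:N-Lp} has a soft spot worth naming. You propose interpolating the map $f\mapsto F$ between $W^{2-\frac1p,p}(\Gamma)\to W^{2,p}(\Omega)$ and $W^{1-\frac1p,p}(\Gamma)\to W^{1,p}(\Omega)$ to land at $f\in W^{1,p}(\Gamma)\mapsto F\in W^{1+\frac1p,p}(\Omega)$ and then take a boundary trace of $\nabla F\in W^{\frac1p,p}(\Omega)$; but $W^{\frac1p,p}(\Omega)$ is exactly the borderline space where the trace operator \emph{fails} to be bounded into $L^p(\Gamma)$, so the step ``whose gradient still has an $L^p(\Gamma)$-trace'' does not go through as stated. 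To make interpolation work you would need to interpolate $N$ itself between two valid endpoints (say $W^{2-\frac1p,p}\to W^{1-\frac1p,p}$ and $W^{1-\frac1p,p}\to W^{-\frac1p,p}$ via the weak normal trace), and then one still has to be careful about whether complex interpolation of Sobolev--Slobodeckij spaces produces $W^{1,p}$ or a Bessel/Besov variant for $p\ne 2$. The paper sidesteps all of this by invoking the symbol calculus; your alternative clause does too, so the proposal is not wrong, but the interpolation argument as written would need repair. Similarly, your open-mapping argument for \eqref{eq:n9} is a legitimate alternative to the paper's citation of Taylor, but note that the surjectivity of $N$ from mean-zero $W^{1,p}(\Gamma)$ onto mean-zero $L^p(\Gamma)$ is fractional-order Neumann $L^p$-regularity, which is itself essentially the pseudodifferential fact you are trying to avoid --- so this route relocates rather than eliminates the nontrivial input.
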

\begin{proof}
The properties \eqref{eq:Nsym}, \eqref{eq:npos} follow easily from the definition of $N$ and $F$.
 To prove \eqref{eq:N-W1p} we first remark that the trace space of $W^{k,p}(\Omega)$ is $W^{k-\frac{1}{p},p}(\Gamma)$, see \cite[Theorem 3.67]{DeDe12}. Given $f\in W^{2-\frac{1}{p},p}(\Gamma)$ by \cite[Theorem 9.14,Theorem 9.15]{GiTr01} there exists a harmonic function $F\in W^{2,p}(\Omega)$ with boundary data $f$ and $\|F\|_{W^{2,p}(\Omega)}\leq C\|f\|_{W^{2-\frac{1}{p},p}(\Gamma)}$. By the continuity of the trace operator $\|Nf\|_{W^{1-\frac{1}{p},p}(\Gamma)}\leq C\|F\|_{W^{2,p}(\Omega)}$ and the conclusion follows.

We deduce \eqref{eq:N-Lp} from \cite[Proposition III.4.9]{Tayl00}. By duality we infer \eqref{eq:N-H-1}. In fact, let $q=\frac{p}{p-1}$. For an arbitrary $\psi\in W^{1,q}(\Gamma)$ we deduce by \eqref{eq:N-Lp} that
\begin{equation*}
  \int_\Gamma (N\varphi)\psi = \int_\Gamma \varphi (N\psi) \leq \|\varphi\|_{L^p(\Gamma)}\|N\psi\|_{L^q(\Gamma)}\leq C\|\varphi\|_{L^p(\Gamma)}\|\psi\|_{W^{1,q}(\Gamma)},
\end{equation*}
hence $\|N \varphi\|_{W^{1,q}(\Gamma)^*}\leq C\|\varphi\|_{L^p(\Gamma)}$.

For \eqref{eq:n9} see the proof of \cite[Proposition III.4.11]{Tayl00}.
\end{proof}

\end{appendix}


\end{document}